\theoremstyle{plain}     
\newtheorem{thm}{Theorem}[section]  
\newtheorem{prop}[thm]{Proposition}      
\newtheorem{lemma}[thm]{Lemma}  
\newtheorem{coro}[thm]{Corollary}       
\newtheorem{dfn}[thm]{Definition}
\long\def\symbolfootnote[#1]#2{\begingroup%
\def\thefootnote{\fnsymbol{footnote}}\footnote[#1]{#2}\endgroup}   
\theoremstyle{remark}
\title{Expanding maps and continued fractions}
\thanks{Magee was supported in part by NSF Grant \#DMS-1128155. Oh was supported in part by NSF Grant \#1361673.}
\author{Michael Magee}
\address{School of Mathematics, Institute for Advanced Study, Princeton, NJ 08540}
\email{mmagee@ias.edu}
\author{Hee Oh}
\address{Mathematics department, Yale university, New Haven, CT 06511 and Korea Institute for Advanced Study, Seoul, Korea}
\email{hee.oh@yale.edu}
\author{Dale Winter}
\address{Department of Mathematics, Brown University, Providence, RI 02906}
\email{dale\_winter@brown.edu}
\begin{document}
\begin{abstract}
We obtain a power saving in the error term for a semigroup congruence lattice point count related to continued fractions. This is done by adapting arguments from recent work of Oh and Winter \cite{OW} that give uniform bounds for certain transfer operators in the congruence aspect. Our arguments also build crucially on work of Naud \cite{NAUD} and  Bourgain, Gamburd and Sarnak \cite{BGSACTA}. The result we obtain, together with a certain conjecture about the multiplicative combinatorics of $\mathrm{SL}_2(\mathbf{Z})$ that we highlight in the sequel, can be used to obtain an improvement on the size of the exceptional set  in Bourgain and Kontorovich's work  \cite{BKANNALS} on Zaremba's conjecture. 
\end{abstract}

\maketitle

\def\R{\mathbf{R}}
\def\C{\mathbf{C}}
\def\Z{\mathbf{Z}} 
\def\Q{\mathcal{Q}}
\def\RR{\mathcal{R}}

\def\a{\alpha}
\def\b{\beta}
\def\g{\gamma}
\def\d{\delta}
\def\e{\epsilon}
\def\i{\iota}
\def\G{\Gamma}
\def\GG{\mathbf{G}}
\def\vp{\varphi}

\def\GL{\mathrm{GL}}

\def\A{\mathcal{A}} 
\def\AA{\mathbb{A}} 
\def\B{\mathcal{B}} 
\def\E{\mathcal{E}}
\def\H{\mathbb{H}} 
\def\HH{\mathcal{H}}
\def\N{\mathcal{N}}
\def\O{\mathcal{O}} 
\def\Ohat{\widehat{\O}} 
\def\P{\mathcal{P}}
\def\K{\mathcal{K}}
\def\k{\kappa}
\def\T{\mathbb{T}}
\def\s{\sigma}
\def\spec{\mathrm{spec}}
\def\SO{\mathrm{SO}}
\def\End{\mathrm{End}}
\def\so{\mathfrak{so}} 
\def\SL{\mathrm{SL}}
\def\CC{\mathcal{C}}
\def\vol{\mathrm{vol}}
\def\Ind{\mathrm{Ind}}

\def\F{\mathbb{F}} 
\def\D{\mathcal{D}}
\def\L{\mathcal{L}}
\def\Mhat{\hat{\mathcal{M}}}
\def\diam{\mathrm{diam}}
\def\U{\mathcal{U}}
\def\Int{\mathrm{Int}}
\def\I{\mathcal{I}}
\def\Xhat{\widehat{X}}
\def\NN{\mathbb{N}}
\def\Lip{ {C^1}}
\def\ev{\mathrm{ev}}
\def\tr{\mathrm{tr}}

\tableofcontents

\section{Introduction}
Our initial set up is  that of expanding maps on Cantor sets as in Naud \cite{NAUD}. We consider $k\ge 2$ disjoint closed and bounded intervals $I_1 , \ldots , I_k \subset \R$ and set
\begin{equation}
I \equiv \cup_{j =1 }^k I_j .
\end{equation}
Our dynamics will come from a $C^2$ map 
\begin{equation}
T: I \to \R
\end{equation}
with the two properties
\begin{description}
\item[Eventually expanding]\label{expansion} There exist $\g >1$ and $D >0$ such that for all $N \geq 1$ and $x \in T^{-N +1}( I)$
\begin{equation}
| (T^N)'(x) | \geq D^{-1} \g^N .
\end{equation}
\item[Markov property] For all $i , j$, if $T(I_i) \cap \Int I_j \neq \emptyset$ then $T(I_i) \supset I_j$.
\end{description}
One can then reinterpret $T$ in the symbolic setting as follows. We define a $k \times k$ transition matrix $A$ by
\begin{equation}A_{i,j} =
\begin{cases}
    1 & \text{if $T(I_i) \supset I_j $}\\
    0 & \text{else}.
 \end{cases}
\end{equation}
We assume that there is a power $p_0>0$ with $A^{p_0} > 0$; this is the case when $T$ is topologically mixing.
We then obtain via $A$ a one sided subshift of finite type on the letters $\{ 1 , \ldots , k\}$ consisting of sequences that are permissible under $A$:
\begin{equation}
\Sigma_A^+ = \{ x=( x_n )_{n \in \mathbb{N}} \in \{ 1 , \ldots , k\}^\NN \: : \: A_{ x_{i} , x_{i+1} } =1\: \;\;  \forall i \in \NN  \}.
\end{equation}
In our key example of continued fractions this is the full shift with $A_{i j } = 1$ for all $i$ and $j$. The shift map $\sigma$ shifts sequences to the left so that $(\sigma x)_n = x_{n+1}$. We define
\begin{equation}
K = \bigcap_{i=0}^\infty T^{-i}(I) .
\end{equation}
Then $K$ is closed and invariant under $T$ and the dynamical systems $(K , T)$ and $(\Sigma_A^+ , \sigma)$ are topologically conjugate with each other.

We are also given a real valued function $\tau \in C^1(I)$ that is to be thought of as part of the dynamics\footnote{The dynamical system we will indirectly study is the suspension of the subshift specified by A, by the height function $\tau$.}.
We assume that $\tau$ is \textit{eventually positive}, which means that there is some $N\ge 1$ such that
\begin{equation}
\tau^N(x)  \equiv \sum_{ i = 0 }^{N - 1} \tau( T^i x ) 
\end{equation}
is strictly positive on $T^{-N}(I)$.
For $\mu$ a $T$-invariant probability measure on $K$, let $h_\mu(T)$ denote the entropy of $T$ with respect to $\mu$ .

The pressure functional for the function $-s \tau$ is defined by
\begin{equation}
P( -s \tau)  = \sup_ { \mu \in \mathcal{M}(K)^T } \left( h_\mu(T) -s \int_K \tau d\mu \right)
\end{equation}
where $ \mathcal{M}(K)^T$ denotes the set of all $T$-invariant probability measures on $K$. It follows
from the variational principle that $P( -s \tau)$ is strictly decreasing in $s$ and has a unique positive zero denoted by $s_0.$

Our functional analysis begins with the Banach space $C^1(I)$ equipped with the norm
\begin{equation}\label{eq:norms}
\| f \|_{C^1(I)} = \|f\|_\infty + \| f' \|_\infty.
\end{equation}
The complex transfer operator, defined for each $s \in \C$, is the bounded operator on $C^1(I)$ defined by
\begin{equation}
\L_{-s\tau}[f](x) = \sum_{ Ty = x } e^{-s\tau(y) } f(y) .
\end{equation}

We wish to extend these operators in an equivariant way to the space $C^1(I ; V)$ of continuously differentiable vector valued functions with norm analogous to \eqref{eq:norms}. Suppose we are given a locally constant assignment
\begin{equation}
c_0 : I \to \mathrm {SL}_2(\Z).
\end{equation}
We will view $c_0$ as fixed throughout this paper and we \textit{will always assume that the values $c_0(I_j)$, $1\le j\le k$, freely generate a semigroup, which we will denote by $\G$.}
For each $q \in \Z$, we define
\begin{equation}
\G_q \equiv \SL_2( \Z / q\Z ).
\end{equation}
By reducing $c_0$ modulo $q$, we get a map
\begin{equation}
c_q : I \to \G_q
\end{equation}
which via the right regular representation of $\G_q$ can be viewed as a unitary valued locally constant map
\begin{equation}
c_q : I \to U (  \C^{ \G_q } ).
\end{equation}
Moreover these $c_q$ have a compatibility condition for differing $q$, all having come from the same $c_0$. We define
\begin{equation}
 \L_{{-s\tau},q}[F](x) \equiv  \sum_{ Ty = x} e^{-s\tau(y) } c_q(y) . F(y) 
\end{equation}
which gives a bounded operator on $C^1( I ; \C^{\G_q } )$ for each $q$. We will sometimes use the shorthand $\L_{s, q}$ for this operator and we will frequently write $$s = a + ib.$$

We are going to assume that $\tau$ has a property called \textit{non local integrability} (NLI). This is a feature of the dynamical system coming from the pair  $(T , \tau)$ that goes beyond exponential mixing. We give the definition of (NLI) in Section \ref{largeimaginarypart} and we explain its relevance in our rough outline in Section \ref{rough}. This gives spectral estimates at $s$ with large imaginary part in Proposition \ref{bigimaginaryprop}.

For $s$ close to the real axis, spectral bounds come from a general mixing property that is (conjecturally) intrinsic to any free semigroup $\G \subset \SL_2(\Z)$ and the family of projections $\pi_q : \G \to \G_q$. In Definition  \ref{KSdfn} we define this property, which we label (MIX), for semigroups in $\SL_2(\Z)$ relative to some index set $\Q\subset \mathbb N$; this property encapsulates both the $\ell^2$-flattening Lemma and non-concentration phenomena for random walks in semigroups. 

It does not follow readily from the literature that this property holds in its strongest possible form that would allow $\Q = \mathbb{N}$. This statement would go beyond already formidable work of Bourgain and Varju in \cite{BVARBITRARY}. There, the type of general estimate that we require was circumvented by adroit application of work by Bourgain, Furman, Lindenstrauss and Mozes \cite{BFLM} on toral automorphisms.

We can now state our main Theorem.
\begin{thm}[Main Theorem]\label{mainthm}
Assume that $\tau \in C^1(I)$ has the non local integrability property (NLI) and that  $\G$ has the  combinatorial
mixing property (MIX) for some index set $\Q$. There is $Q_0 \in \mathbb{N}$ such that for any $\eta > 0$, there are $\e = \e(\eta)>0$, $b_0>0$, $0 <  \rho_\eta < 1$, $C_\eta>0$, $0 < \rho_0 < 1$, $r > 0$  and $C>0$ such that the following holds for all $a\in \R$ with $| a - s_0 | < \e$ and $b\in \R$:
\begin{enumerate}
\item \label{mainfirst} When $|b| \leq b_0$ and $f \in C^1(I ; \C^{\G_q} \ominus 1 )$
\begin{equation}
\| \L_{s,q}^m  f \|_{C^1} \leq C q^C  \rho_{0}^m \| f \|_{C^1}
\end{equation}
when  $ q \in \Q$ with $(q,Q_0) = 1$. Here $\C^{\G_q} \ominus 1$ is the orthocomplement to the trivial representation in the right regular representation of $\G_q$;
\item \label{mainsecond} When $| b | > b_0$ 
\begin{equation}
\| \L^m_{s,q}  \|_{C^1} \leq C_\eta | b |^{1+\eta } \rho^m_\eta
\end{equation}
uniformly with respect to $ q \in \Q$.
\end{enumerate}
 Here $s_0>0$ is the unique zero of $s \to P(-s\tau)$ and $P$ is the topological pressure.\end{thm}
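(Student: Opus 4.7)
The plan is to treat the two claims separately, according to the familiar (Naud/Dolgopyat/Bourgain–Gamburd–Sarnak) dichotomy between the ``small'' and ``large'' frequency regimes. In both regimes, the strategy is to first establish a bound in a weaker norm (typically $L^2$), and then to use a Lasota--Yorke (Doeblin--Fortet) type inequality to upgrade this to the $C^1$ norm in the statement.

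For Claim \eqref{mainsecond} ($|b|$ large), I would closely follow the Dolgopyat argument as adapted to hyperbolic surfaces by Naud, using the hypothesis (NLI) exactly as cited. The important point here is that the bound is uniform in $q$, and this should come essentially for free once one observes that the assignment $c_q$ is \emph{unitary}: the Dolgopyat contraction mechanism only uses that $\L_{s,q}$ satisfies a triangle inequality of the form $|\L_{s,q}F|_{\C^{\G_q}}(x)\le \L_a|F|_{\C^{\G_q}}(x)$ pointwise, combined with cancellation coming from the oscillation $e^{-ib\tau}$ on large preimages. One constructs a cone of ``densities'' in $C^1(I;\C^{\G_q})$ stable under iteration of a suitable Dolgopyat operator $\mathcal M_{a,b}$, shows that $\mathcal M_{a,b}^N$ contracts an appropriate $L^2$--norm by a definite factor (this is where NLI enters), and then lifts this to the $C^1$ bound by a Lasota--Yorke inequality together with Proposition \ref{bigimaginaryprop}. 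The loss of $|b|^{1+\eta}$ comes from choosing $N\sim \log|b|$ and tracking constants in the $C^1$ norm through this many iterates; the bound is uniform in $q$ because unitarity makes the argument agnostic to the representation.

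For Claim \eqref{mainfirst} ($|b|\le b_0$), the scalar operator $\L_{s}$ has spectral gap for $a$ close to $s_0$ by the Ruelle--Perron--Frobenius theorem, and one must show this gap persists in the congruence twist, with controlled polynomial loss in $q$. I would first work in $L^2(I;\C^{\G_q}\ominus 1)$. Expanding $\|\L_{s,q}^m F\|_{L^2}^2$ as a sum over pairs of $m$-fold preimages, one recognizes the result as a convolution-type quantity associated with the induced random walk on $\G_q$; the hypothesis (MIX) on $\G$ (which encapsulates $\ell^2$-flattening and non-concentration, analogous to what appears in \cite{BGSACTA,OW}) produces decay of this quantity of the form $\le C q^C \rho_0^m$ when $q\in\Q$, $(q,Q_0)=1$ and $F$ is orthogonal to the trivial representation. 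With this $L^2$ decay in hand, I would apply a Lasota--Yorke estimate
\begin{equation*}
\|\L_{s,q}^{n+m} F\|_{C^1}\le A\rho^n\|\L_{s,q}^m F\|_{C^1}+B\|\L_{s,q}^m F\|_\infty,
\end{equation*}
controlling $\|\cdot\|_\infty$ by $\|\cdot\|_{L^2}$ through a Cauchy--Schwarz argument against the normalized scalar density $\L_s^m 1$, and iterate to propagate the $L^2$ decay to a $C^1$ decay of the same shape.

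The main obstacle I expect is the $L^2$-decay step in Claim \eqref{mainfirst}: carefully extracting a congruence-uniform spectral gap from the purely combinatorial input (MIX), while keeping the dependence on $q$ polynomial, is the delicate part and is precisely where the adaptation of \cite{OW} to the present semigroup/continued-fraction setting becomes nontrivial. By contrast, Claim \eqref{mainsecond} is conceptually a direct transfer of Naud's argument to the unitary-equivariant setting, and the Lasota--Yorke upgrade in each regime is technical but routine.
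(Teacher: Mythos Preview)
Your overall dichotomy (Dolgopyat/NLI for large $|b|$, congruence mixing for small $|b|$, Lasota--Yorke to upgrade norms) matches the paper's architecture, but two specific points deserve correction.

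For Part~\ref{mainsecond}, the uniformity in $q$ does \emph{not} come essentially for free from unitarity. Unitarity gives you the pointwise triangle inequality $|\L_{s,q}F|\le \L_a|F|$, but the heart of the Dolgopyat argument is strictly sharper: one must show that for two specific inverse branches $\alpha_1^N,\alpha_2^N$ provided by NLI, the vectors $e^{ib\tau^N(\alpha_i^N x)}c_q^N(\alpha_i^N x)f(\alpha_i^N x)$ make a definite angle on a dense set of intervals (Lemma~\ref{technical}). The phase $e^{ib\tau^N}$ oscillates, but a priori the unitary twist $c_q^N(\alpha_i^N x)$ could rotate $f(\alpha_i^N x)$ in a way that cancels this oscillation. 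What saves the argument is that $c_q^N\circ\alpha_i^N$ is \emph{locally constant} on each small interval, so it factors out of the derivative estimates controlling the relative angle; this is the genuine technical contribution in this regime and is where the paper does its work.

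For Part~\ref{mainfirst}, the paper does not expand $\|\L_{s,q}^m F\|_{L^2}^2$ over pairs of preimages. Instead it works directly in the sup and Lipschitz norms: writing $N=M+R$, one uses the expanding property to replace $W(\alpha^N x)$ by $W(\alpha^M x_j)$ up to $O(\gamma^{-M})$, and then for each $\alpha^M$ the remaining sum over $\alpha^R$ is literally a convolution $\mu_{s,x,\alpha^M}\star\varphi_{W,\alpha^M}$ on $\G_q$ to which (MIX) applies directly, yielding Lemmas~\ref{infinitybound} and~\ref{uniterated}. Your $L^2$-over-pairs expansion does not obviously produce a convolution of the form required by Definition~\ref{KSdfn}. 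Moreover, (MIX) only gives decay on the ``new'' subspace $E_q$ (functions not lifted from any proper divisor), not on all of $\C^{\G_q}\ominus 1$; the paper passes from the former to the latter by decomposing $\C^{\G_q}=\bigoplus_{q'|q}E^q_{q'}$ and applying the $E_{q'}$-estimate at each level (Section~4.6). This divisor decomposition is where the bad modulus $Q_0$ actually enters, and it is absent from your sketch.
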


A theorem parallel to this has recently been proved in the setting of convex cocompact subgroups of $\SL_2(\Z)$ by Oh and Winter \cite{OW}. This work resolved an outstanding question raised by the paper of Bourgain, Gamburd and Sarnak \cite{BGSACTA} as to whether the Selberg zeta functions of a tower of infinite volume congruence Riemann surfaces have a uniform zero free strip. Uniform exponential mixing results for the geodesic flow 
has also been deduced in \cite{OW} in the congruence aspect.  As is well-known (cf. \cite{MO}), such a result
yields a uniform power-savings error term in the associated congruence lattice point count (cf. Corollary \ref{maincoro} below).  Theorem \ref{mainthm} will  give a parallel proof of a uniform zero-free strip of the family of Selberg zeta functions. The translation from Theorem \ref{mainthm} to such a result is briefly explained in \cite{BGSACTA}, the relevant non local integrability property having been established by Naud \cite{NAUD}.

The proof of Part \ref{mainfirst} of Theorem \ref{mainthm} is a mild adaptation of work of Bourgain, Gamburd and Sarnak from \cite{BGSACTA}.
 The proof of Part \ref{mainsecond} transports the main observation of \cite{OW} that the non local integrability property is still effective in the vector valued setting of \cite{OW} to the current setting of expanding maps on the real line. All these ideas stem from the important work of Dolgopyat \cite{DOLG} that was translated to the setting under consideration by Naud \cite{NAUD}, building on work of Stoyanov \cite{STOYBILLIARD}. Indeed, in \cite{NAUD}, Naud proves the analog of Theorem \ref{mainthm} when there is no  congruence dependence.

We turn now to something more concrete. The reason we have chosen our particular setup is that there is an important application to a dynamical system associated to continued fractions and in particular to Zaremba's conjecture (\cite{ZAREMBA1}, \cite{ZAREMBA}). We have chosen to make this system the case study of this paper.

Let $\A \subset \mathbb{N}$ be any finite alphabet with at least two elements. We define the  semigroup ${\mathcal G}_\A$ to consist of all possible products of matrices 
\begin{equation} g_a \equiv 
\left(\begin{matrix}
0 & 1\\
1 & a 
\end{matrix}\right),\quad a \in \A
\end{equation}
in $\GL_2(\Z)$. We will pass to the (free) semigroup $\G_\A$ generated by matrices of the form
\begin{equation}
S_\A \equiv  \left\{
\left(\begin{matrix}
0 & 1\\
1 & a 
\end{matrix}\right) \cdot  \left(\begin{matrix}
0 & 1\\
1 & a' 
\end{matrix}\right)  : \quad a , a' \in \A \right\}.
\end{equation}
The family of semigroups  ${\mathcal G}_\A$  and $\G_\A$ was studied by Bourgain and Kontorovich in \cite{BKANNALS} in connection with continued fractions. 
If $x \in (0, 1) $ has continued fraction expansion 
\begin{equation}
x=  \cfrac{1}{a_1+\cfrac{1}{a_2+ \ddots \cfrac{1}{a_k + \ddots}}}
\end{equation} we write $x=[a_1, a_2, \ldots]$.
The numbers $a_j\in \mathbb N$ are called the partial quotients of $x$.
If $\Lambda(\A)$ denotes the set of all infinite continued fractions of the form
$[a_1 , a_2 , a_3 ,\ldots  ] $ where all $a_i \in \A$, it is not hard to see that if we have some fixed origin $o$ in the upper half plane $\H$, then $\Lambda(\A)$ coincides with the set of accumulation points of $\mathcal{G}_\A (o)$ in $\R$, where ${\mathcal G}_\A$ acts as M\"{o}bius transformations. This set is Cantor-like with associated Hausdorff dimension $0 < \d = \delta_\A < 1$.

We discuss lattice point count results for the semigroup $\G_\A$ which are uniform in the congruence aspect, with respect to the matrix norm
\begin{equation}
\left\|\left( \begin{matrix} a & b \\
c& d
\end{matrix}\right)  \right\| =\sqrt{  a^2  + b^2 + c^2 + d^2} .
\end{equation}

In Section \ref{continuedfractions}, we show that for each alphabet $\A$ there is a corresponding choice of $I$, $T$ and $\tau$ so that the periodic orbits of $T$ are related via $\tau$ to the traces of elements in $\G_\A$. Moreover under this correspondence the zero $s_0$ of the pressure functional $P(-s\tau)$ coincides with $\delta_\A$.

By work of Lalley \cite{LALLEYSYMB} and a certain effectivization that we give in Section \ref{continuedfractions}, it is possible to infer from our main Theorem \ref{mainthm} that one obtains a uniform power saving in the error term of the appropriate lattice point count.  To this end we show in Proposition \ref{distortionprop} that the distortion function $\tau$ associated to $\G_\A$ has the non local integrability (NLI) property. 

 We prove our counting result conditionally
  on the combinatorial mixing property  (MIX) for the $\G_{\A}$ stated in Definition \ref{KSdfn}.

\begin{coro}[Main Corollary]\label{maincoro}
If $\G_\A$ has the property (MIX) for $ \Q$, then there exist a finite modulus $Q_0 \in \mathbb{N}$, $C >0$ and $\e > 0$ such that for all $\g_0 \in \G_{\A}$,
 $\xi \in \SL_2(\Z / q \Z)$ and $q\in \Q$ with $(Q_0,q) = 1$,
\begin{equation}
\sum_{ \substack {    \frac{   \| \g \g_0 \| }{ \| \g_0 \| } \leq R     \\ \g \equiv \xi \bmod q         } } G( \g \g_0 o ) = \frac{ R^{2\delta_{\mathcal A}} }{ |\G_q | } C_*(\gamma_0, G\lvert_\R) + O \left(  (  \| G \|_\infty + \| [G\lvert_\R]' \|_\infty ) q^C  R^{  2(\delta_{\mathcal A} - \e)  }\right).
\end{equation}
Here $G$ is any function in $C^1 ( \H \cup \R)$ which is constant on some neighborhood of the cylinders of length $M$ in $\Lambda(\A)$ for some $M > 0$. The constant 
$C_*(\gamma_0, G\lvert_\R)>0$ is defined in detail in Section \ref{continuedfractions}. The implied constant depends on $M$.
\end{coro}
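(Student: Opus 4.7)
The plan is to transport the uniform spectral estimates of Theorem \ref{mainthm} to a uniform lattice-point count in the manner of \cite{MO} and \cite{OW}, now adapted to the symbolic dynamics of continued fractions. Using the coding of $\G_\A$ by admissible finite words of $\Sigma_A^+$ from Section \ref{continuedfractions}, I would rewrite the sum
\begin{equation*}
\sum_{\substack{\|\gamma\gamma_0\|/\|\gamma_0\| \le R \\ \gamma \equiv \xi \bmod q}} G(\gamma\gamma_0 o)
\end{equation*}
as a weighted sum over words $(a_1,\ldots,a_n)$ in which the constraint on $\|\gamma\gamma_0\|/\|\gamma_0\|$ becomes an interval constraint on the Birkhoff sum $\tau^n$ evaluated at an appropriate cylinder point; the effectivization of Lalley's theorem carried out in Section \ref{continuedfractions} is exactly what renders the comparison between $\log(\|\gamma\gamma_0\|/\|\gamma_0\|)$ and $\tau^n$ uniform in $\gamma_0$. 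The congruence condition $\gamma\equiv\xi\bmod q$ is encoded by expanding $\mathbf{1}_{\gamma\equiv\xi}$ in the canonical orthonormal basis of $\C^{\G_q}$, so that iteration of matrix products becomes iteration of $\L_{0,q}$ applied to an explicit $C^1$ vector-valued test function $F_G$ built from $G$ (the local-constancy hypothesis on $G$ at scale $M$ is what guarantees $F_G\in C^1(I;\C^{\G_q})$).

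I would then decompose $\C^{\G_q}=\C\cdot 1\;\oplus\;(\C^{\G_q}\ominus 1)$. On the trivial line, $\L_{0,q}$ reduces to the scalar transfer operator of Naud, and classical Ruelle--Perron--Frobenius / Tauberian machinery delivers the main term $C_*(\gamma_0,G|_\R)R^{2\delta_\A}/|\G_q|$; the factor $|\G_q|^{-1}$ arises because projection onto $\C\cdot 1$ is an average over $\G_q$. For the complementary part I would pass to a smoothed Perron-type contour integral for $(1-\L_{s,q})^{-1}$, as in \cite{NAUD,OW}, and push the contour to $\Re s=s_0-\e$. On the compact window $|b|\le b_0$, part \ref{mainfirst} of Theorem \ref{mainthm} delivers geometric decay of $\L_{s,q}^n$ with a polynomial-in-$q$ prefactor; on the tail $|b|>b_0$, part \ref{mainsecond} delivers geometric decay with a prefactor polynomial in $|b|$ and uniform in $q$. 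Taking $n\asymp\log R$ and summing the two contributions yields the error $q^{C}(\|G\|_\infty+\|[G|_\R]'\|_\infty)R^{2(\delta_\A-\e)}$.

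The step I expect to be the most delicate is the contour-shift argument itself: one needs $1-\L_{s,q}$ to be invertible on $\C^{\G_q}\ominus 1$ throughout a strip $\Re s>s_0-\e$ whose width is independent of $q$, which forces one to glue part \ref{mainfirst} (small $|b|$) to part \ref{mainsecond} (large $|b|$) while controlling the vertical integrability of the resolvent with at most a logarithmic loss that can be absorbed into $q^{C}$. One must also verify that the smoothing used in the Perron step is compatible with the mere $C^1$ regularity of $G$, so that only the stated seminorms $\|G\|_\infty+\|[G|_\R]'\|_\infty$ surface in the error. With these ingredients in place the main and error terms combine as claimed, with all dependence on $\gamma_0$ packaged into the coefficient $C_*(\gamma_0,G|_\R)$ through the effectivization of Section \ref{continuedfractions}.
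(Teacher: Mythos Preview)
Your proposal is correct and follows essentially the same route as the paper. The paper organizes the argument around Lalley's renewal equation: one defines the counting function $N_q(a,x,\varphi)$ on the Cantor set, takes its Laplace transform to obtain $s\,n_q(s,x,\varphi)=[(1-\L_{-s\tau,q})^{-1}(g\otimes\varphi)](x)$, smooths against a compactly supported kernel $k_\lambda$, and shifts the contour to $\Re s=\delta-\e'$ using exactly the two regimes of Theorem~\ref{mainthm} that you identify; this yields Proposition~\ref{boundaryprop}. The passage from the semigroup count $N_q^*(a,\gamma_0,\varphi)$ to the Cantor-set count $N_q$ is done by iterating a \emph{finite} renewal equation to depth $n=\lfloor ca\rfloor$ and sandwiching via the bridging estimate $\tau_*^n(\gamma\gamma_0)=\tau^n(\gamma\gamma_0 k_0)+O(\kappa^N)$ (Lemmas~\ref{bridging} and~\ref{sandwich}), which is precisely the ``effectivization of Lalley'' you allude to. Your Perron/contour formulation and the paper's Laplace/renewal formulation are the same mechanism, and your anticipated delicacies (uniform invertibility in the strip, compatibility of the smoothing with $C^1$ regularity) are handled in the paper exactly as you suggest.
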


  We note here that the methods of this paper will yield that Corollary \ref{maincoro} holds for any free semigroup $\Gamma$ on at least 2 generators contained in a convex cocompact subgroup of $\SL_2(\Z)$.

The result of Bourgain, Gamburd and Sarnak   \cite{BGSACTA}
referenced  in \cite[Theorem 8.1]{BKANNALS}  does not have such a uniform error term. Their saving is of the form $
R^{2\delta_{\mathcal A} - c / \log \log R }$, under the same hypothesis as in Corollary \ref{maincoro}.

In Section \ref{modular} Lemma \ref{sfmix}, we show that 
if $\G \subset \SL_2(\Z)$ is any semigroup that is freely generated by $c_0(I)$, then $\G$ has property (MIX) for $\Q$ consisting of square free numbers. This is a relatively minor extension  of prior work of  Bourgain, Gamburd and Sarnak \cite{BGSACTA} following in turn  from work of Bourgain and Gamburd \cite{BOURGAINGAMBURDEXP}. The only novelty here is that we are not necessarily contained inside a free group, but we are inside the free product $\SL_2(\Z)$. Lemma \ref{sfmix} thus gives
\begin{coro}
Corollary \ref{maincoro} holds when $\Q$ consists of square free numbers.
\end{coro}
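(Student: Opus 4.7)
The plan is to deduce this corollary as an immediate combination of two inputs already stated in the paper. First I would invoke Lemma \ref{sfmix}, which supplies property (MIX) with $\Q$ the squarefree integers for any semigroup $\G \subset \SL_2(\Z)$ that is freely generated by $c_0(I)$ for some locally constant $c_0$. I would then check that the continued fractions semigroup $\G_\A$ of Section \ref{continuedfractions} fits into this hypothesis: by construction of Section \ref{continuedfractions}, the intervals associated to $\A$ together with the locally constant assignment $c_0 : I \to \SL_2(\Z)$ realize $\G_\A$ as the (free) semigroup generated by $c_0(I)$, since the generating set $S_\A$ is free and is exactly the image of $c_0$ on the pieces $I_j$.

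With Lemma \ref{sfmix} applied to $\G = \G_\A$, property (MIX) now holds for $\Q = \{\text{squarefree integers}\}$. At this point I would feed this $\Q$ directly into Corollary \ref{maincoro}. The other standing hypothesis of that corollary — non-local integrability (NLI) for the distortion function $\tau$ associated to $\G_\A$ — was already established by Proposition \ref{distortionprop}, and the eventually expanding and Markov hypotheses on $(I,T)$ underlying the Main Theorem were verified in Section \ref{continuedfractions}. Thus every hypothesis of Corollary \ref{maincoro} is met, and its conclusion reproduces verbatim the statement we wish to prove, now with $\Q$ the squarefree integers.

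I do not anticipate a genuine obstacle: all analytic content (the uniform spectral gap of Theorem \ref{mainthm}, the $\ell^2$-flattening and non-concentration underlying (MIX), the NLI input of Naud) has been pushed into the earlier results, and the proof is essentially a citation. The only points requiring minor attention are that the excluded modulus $Q_0$ and the implied constants in Corollary \ref{maincoro} depend only on $c_0$ and $\A$, so that restricting $\Q$ to squarefree integers does not alter the form of the estimate, and that the freeness of the semigroup generated by $S_\A$ — used to apply Lemma \ref{sfmix} — is the same freeness assumed throughout the paper for $c_0$.
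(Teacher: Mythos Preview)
Your proposal is correct and matches the paper's own reasoning: the corollary is stated immediately after the sentence ``Lemma \ref{sfmix} thus gives'', so the paper's proof is precisely the citation argument you outline, invoking Lemma \ref{sfmix} to supply (MIX) for $\G_\A$ over squarefree $q$ and then feeding this into Corollary \ref{maincoro} (with (NLI) already secured by Proposition \ref{distortionprop}).
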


The first named author (Magee) expects to extend the mixing property (MIX) for semigroups in $\SL_2(\Z)$ to $\Q = \mathbb{N}$ (avoiding maybe some finite modulus) in future work.

Our refinement of this counting estimate is an important step towards a power-savings estimate on the exceptional set for Zaremba's conjecture. 
Let $\mathfrak D_{\A}$ denote the set of all positive integers $d$ such that for some $b$ coprime to $d$, all the partial quotients of $b/d$ belong to $\A$. 
Zaremba's conjecture (\cite{ZAREMBA1}, \cite{ZAREMBA}) states that $$\mathfrak D_{\{1, 2, \cdots, A\}} = \mathbb{N}$$ for some sufficiently large $A>0$; in fact,
$A=5$ should suffice. Zaremba's conjecture was motivated by its applications to numerical integration, Monte-Carlo methods
and pseudo-random numbers (cf. \cite{BOURGAINSURVEY}).
Bourgain and Kontorovich \cite{BKANNALS} obtained the following estimate
\begin{equation}\label{bk} \# \mathfrak D_{\{1, 2, \cdots, 50\}}\cap [1, N] = N+O(N^{1-c/(\log \log N)})\end{equation}
based on the aforementioned counting result of Bourgain, Gamburd and Sarnak \cite{BGSACTA}, so implicitly conditional
on the (MIX) property of $\G_{\{1,2,\cdots, 50\}}$ for $\mathbb N$. The qualitative
density zero statetment $\# \mathfrak D_{\{1, 2, \cdots, 50\}}\cap [1, N] = N+o(N)$ was proved unconditionally
in the arXiv version of their paper (arXiv:1107.3776v1). Recent work of Huang \cite{Huang} established similar estimates for the alphabet $\mathcal{A} := \lbrace 1, 2,3,4 , 5 \rbrace$ by improving estimates on multilinear sums using Vinogradov's method.

We remark that the relevance of the semigroup-counting problem for $\mathcal G_\A$ (or to $\G_\A$)
to Zaremba's conjecture  comes from the relation  $$\mathfrak D_{\A}=\langle \mathcal G_\A \begin{pmatrix} 0 \\ 1 \end{pmatrix},
\begin{pmatrix} 0 \\ 1 \end{pmatrix}\rangle .$$

Combining Bourgain-Kontorovich's method, Huang's refinement, and with the counting estimate Corollary \ref{maincoro} in place of \cite[Theorem 8.1]{BKANNALS}, we obtain the following improvement of \eqref{bk} with a power-savings error term
(the key point is that this enables us to replace the parameter $\mathcal Q=N^{\alpha_0/\log\log N}$ in \cite{BKANNALS} and \cite{Huang} with a power of $N$).

\begin{thm}\label{mainZa}
Letting $\mathcal A=\{ 1,2,3,4, 5\}$, assume that $\Gamma_{\mathcal A}$ has the (MIX) property with respect to $\Q=\mathbb N$. Then for some $\e>0$,
$$\#\mathfrak D_{\A}\cap [1, N]=N +O(N^{1-\epsilon}).$$
\end{thm}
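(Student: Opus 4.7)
The plan is to feed Corollary \ref{maincoro} into the Bourgain--Kontorovich circle-method framework of \cite{BKANNALS}, retaining Huang's refinements \cite{Huang} that are needed to handle the alphabet $\mathcal A=\{1,2,3,4,5\}$. I would start with the representation
\[
\mathfrak D_{\mathcal A}=\bigl\langle \mathcal G_{\mathcal A}\tbinom{0}{1},\tbinom{0}{1}\bigr\rangle
\]
and form, for each $N$, a smooth counting function $R_N(n)$ supported roughly on integers $n\asymp N$ which counts the number of ways $n$ can appear as the bottom coordinate of $\gamma\tbinom{0}{1}$ for $\gamma\in \G_{\mathcal A}$ with $\|\gamma\|\le T$, $T\asymp N^{1/2}$. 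Following the standard dichotomy, $\widehat{R_N}(\theta)$ is analyzed separately on a major-arc union $\mathfrak M$ and the complementary minor arcs $\mathfrak m$, with the Farey dissection parametrized by moduli $q\le \mathcal Q$. The key parameter to gain is the size of $\mathcal Q$.

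On the major arcs, the local analysis at each modulus $q$ reduces to counting points in $\G_{\mathcal A}$ landing in a prescribed congruence class modulo $q$, weighted by a smooth function concentrated on $\Lambda(\mathcal A)$. This is exactly the input provided by Corollary \ref{maincoro}: conditional on (MIX) for $\mathcal Q=\mathbb N$, one has a main term with total mass $R^{2\delta_{\mathcal A}}/|\Gamma_q|$ and a power-saving error $q^C R^{2(\delta_{\mathcal A}-\varepsilon)}$, uniformly in all relevant parameters. Summing over $q\le \mathcal Q$ and the reduced residues produces the expected main term for $\int_{\mathfrak M}\widehat{R_N}$, together with an error dominated by $\mathcal Q^{C'} N^{1-\varepsilon'}$ for some $\varepsilon'>0$ and some absolute $C'$. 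Crucially, this error is polynomially small in $N$ provided only that $\mathcal Q\le N^{\beta}$ for a sufficiently small $\beta>0$, in sharp contrast to the $\mathcal Q=N^{\alpha_0/\log\log N}$ forced on \cite{BKANNALS,Huang} by their weaker counting.

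On the minor arcs, I would import verbatim the bilinear/multilinear treatment from \cite{BKANNALS} together with Huang's Vinogradov-type improvement on the multilinear sums, which is what makes the alphabet $\{1,\ldots,5\}$ accessible. Their bounds on $\int_{\mathfrak m}|\widehat{R_N}|^2$ (or its ensemble analogue) are of the shape $N^{1-c}\mathcal Q^{-c'}$ for explicit $c,c'>0$; the upshot is that minor arcs gain as $\mathcal Q$ grows, while major arcs cost as $\mathcal Q$ grows. Balancing the two constraints, which in \cite{BKANNALS,Huang} was possible only at $\mathcal Q=N^{\alpha_0/\log\log N}$ and produced the $\log\log$ saving, here becomes a balance between two genuine power saving bounds, and one may choose $\mathcal Q=N^{\beta}$ for any sufficiently small $\beta>0$ so that both $\mathcal Q^{C'}N^{-\varepsilon'}$ and $\mathcal Q^{-c'}$ decay like $N^{-\varepsilon''}$.

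Combining the two estimates yields $\sum_n R_N(n)\mathbf 1_{n\notin \mathfrak D_{\mathcal A}}\ll N^{1-\varepsilon}$ for some $\varepsilon>0$, which translates via the standard positivity argument of \cite{BKANNALS} (no $n\in [N/2,N]$ is counted by $R_N$ unless it lies in $\mathfrak D_{\mathcal A}$) into $\#([N/2,N]\setminus\mathfrak D_{\mathcal A})\ll N^{1-\varepsilon}$; dyadic summation gives the claim. The main obstacle I anticipate is the bookkeeping in the major-arc decomposition to ensure that the uniformity in $q$ and $\xi$ in Corollary \ref{maincoro}, together with the implied constant's dependence on the cylinder scale $M$, is genuinely strong enough to allow $\mathcal Q$ to be a true power of $N$; in particular one must verify that the test functions $G$ arising from smoothed congruence conditions on $\gamma\tbinom{0}{1}$ can be arranged to depend on $q$ only through the factor $q^C$, and that the localization on $\Lambda(\mathcal A)$ required by Corollary \ref{maincoro} is compatible with the archimedean smoothing used in the circle method.
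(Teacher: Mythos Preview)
Your proposal is correct and matches the paper's own argument, which is only sketched in the introduction: the paper simply states that one combines the Bourgain--Kontorovich circle method with Huang's refinement, substituting Corollary~\ref{maincoro} for \cite[Theorem 8.1]{BKANNALS}, the key point being exactly what you identify --- that the power-saving error in Corollary~\ref{maincoro} allows the modulus parameter $\mathcal Q$ to be taken as a genuine power of $N$ rather than $N^{\alpha_0/\log\log N}$. Your write-up is in fact considerably more detailed than what the paper provides.
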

We note that, in a  recent paper \cite{BOURGAINPARTIAL}, Bourgain has sketched an alternative proof of Theorem \ref{mainZa}, again under the hypothesis of the (MIX) property.
As in \cite{BKANNALS}, Theorem \ref{mainZa} holds for any alphabet $\mathcal A$, provided
the dimension $\delta_\A$ is sufficiently large and
there is no local obstruction for $\mathfrak D_\A$.
More precisely, setting
$$\mathfrak A_{\mathcal A}:=\{d\in \mathbb N: d\in \mathfrak D_{\mathcal A} (\text{mod } q) \text{ for all $q>1$}\},
$$
we have:
\begin{thm} \label{zaremba2} Let $\delta_0=5/6$. For any finite alphabet $\mathcal A$ with $\delta_{\mathcal A}>\delta_0$,
there exists $\e>0$ such that as $N\to \infty$,
$$\frac{\# (\mathfrak D_{\mathcal A}\cap [N/2, N])}{\# (\mathfrak A_{\mathcal A}\cap [N/2, N])}= 1+O(N^{-\e}),$$
provided $\Gamma_{\mathcal A}$ has the (MIX) property with respect to $\Q=\mathbb N$. \end{thm}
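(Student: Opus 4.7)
The plan is to run the Hardy-Littlewood circle method of Bourgain-Kontorovich \cite{BKANNALS} and Huang \cite{Huang}, substituting Corollary \ref{maincoro} for \cite[Theorem 8.1]{BKANNALS} wherever the latter is used. Since $\mathfrak D_{\mathcal A} \subset \mathfrak A_{\mathcal A}$ tautologically, the goal is to show that the complement $\mathfrak A_{\mathcal A} \setminus \mathfrak D_{\mathcal A}$ has size $O(N^{1-\epsilon})$ inside $[N/2,N]$. To that end, introduce an ensemble $\Omega_N \subset \Gamma_{\mathcal A}$ of matrices of norm $\asymp N^{1/2}$ so that the denominator $d(\gamma) = $ the bottom entry of $\gamma \binom{0}{1}$ lies in $[N/2,N]$, and let $R_N(n) = \#\{\gamma \in \Omega_N : d(\gamma) = n\}$. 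Since $R_N$ is supported on $\mathfrak D_{\mathcal A}$, Cauchy-Schwarz gives
\[
\#\{n \in [N/2,N] : R_N(n) > 0\} \;\geq\; \frac{\bigl(\sum_n R_N(n)\bigr)^2}{\sum_n R_N(n)^2},
\]
so the theorem reduces to accurate first-moment and second-moment estimates for $R_N$ via the circle method applied to $S_N(\theta) = \sum_{\gamma \in \Omega_N} e(\theta\, d(\gamma))$.

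Decompose $[0,1]$ into major arcs $\mathfrak M(\mathcal Q) = \bigcup_{q \leq \mathcal Q,\, (a,q)=1} \{|\theta - a/q| \leq \mathcal Q/N\}$ and complementary minor arcs. The decisive new feature is that the polynomial-in-$q$ loss $q^C$ in Corollary \ref{maincoro} allows a cutoff $\mathcal Q = N^\eta$ for a genuine fixed power $\eta > 0$, in contrast to the sub-logarithmic $\mathcal Q = N^{\alpha_0/\log\log N}$ used in \cite{BKANNALS, Huang}. On each major arc $|\theta - a/q|\leq \mathcal Q/N$, Corollary \ref{maincoro} (applicable at every modulus $q \in \mathbb N$ under (MIX) with $\mathcal Q = \mathbb N$) expands $S_N(a/q + \beta)$ as a Ramanujan-type singular factor times a smooth archimedean integral, plus an error $O(q^C N^{2\delta_{\mathcal A} - \epsilon})$. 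Summing over $q \leq N^\eta$ produces a main term $|\Omega_N| \cdot N^{-1} \cdot \mathfrak S_{\mathcal A}(n) \cdot \mathfrak I_N(n)$ whose singular series $\mathfrak S_{\mathcal A}$ is supported precisely on $\mathfrak A_{\mathcal A}$ by the strong approximation / Chinese remainder structure of $\Gamma_{\mathcal A}$ modulo composite $q$. On the minor arcs, $\int_{\mathrm{minor}} |S_N|^2$ is controlled via the $\ell^2$-flattening / expander bounds packaged inside (MIX); the dimension threshold $\delta_{\mathcal A} > 5/6$ is precisely what is needed for this to be dominated by the main term with a power saving.

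The main obstacle is not the conceptual novelty of any individual step — the architecture is that of \cite{BKANNALS} as sharpened by \cite{Huang} — but rather the bookkeeping needed to convert every $\log$-type saving in those arguments into a genuine power saving. One must verify that the singular-series computation on each major arc is power-stable as $q$ ranges up to $N^\eta$, that the local-global identification $\mathrm{supp}\,\mathfrak S_{\mathcal A} = \mathfrak A_{\mathcal A}$ is preserved quantitatively at composite moduli, and that the minor-arc expansion estimate retains a genuine power decay for $\theta$ well-separated from Farey fractions of low height. All three are readily accommodated once Corollary \ref{maincoro} is inserted in place of \cite[Theorem 8.1]{BKANNALS}, but the quantitative bookkeeping — ensuring that the parameter $\mathcal Q = N^\eta$ propagates consistently through both the major arc singular integral and the $L^2$ minor arc bound — is the technical core of the argument.
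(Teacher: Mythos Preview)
Your proposal is correct and matches the paper's own treatment: the paper does not give a standalone proof of this theorem but simply records it as the outcome of running the Bourgain--Kontorovich circle method \cite{BKANNALS} with Huang's refinement \cite{Huang}, inserting Corollary~\ref{maincoro} in place of \cite[Theorem~8.1]{BKANNALS}, and noting that the polynomial $q^C$ error permits the level $\mathcal Q$ to be taken as a genuine power $N^\eta$ rather than $N^{\alpha_0/\log\log N}$. Your sketch is in fact more detailed than what the paper provides.

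One small correction to your outline: the minor-arc bound in \cite{BKANNALS,Huang} does not come from the $\ell^2$-flattening or (MIX) machinery; it is obtained by bilinear/Vinogradov-type estimates on the exponential sum (this is where Huang's improvement and the threshold $\delta_{\mathcal A}>5/6$ enter). The (MIX) hypothesis and Corollary~\ref{maincoro} are used only on the major arcs, to control the congruence sums uniformly in $q$. This does not affect the validity of your overall plan.
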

We note that  $\mathfrak A_{\A}=\mathbb Z$ if $\A$ contains consecutive numbers $m, m+1$ for some $m$,
and $\delta_{\{1,2,,3,4, 5\}}>5/6$ by Jenkinson \cite{Jenkinson}; for general $\mathcal{A}$ see also the algorithm of Jenkinson and Pollicott \cite{JP}.
Also note that $$\delta_{\{1, 2, \cdots, A\}}=1-\frac{6}{\pi^2 A}-\frac{72\log A}{\pi^4 A^2} +O\left(\frac{1}{A^2}\right)$$ was given by Hensley \cite{Hensley}.
 
  We also draw the reader's attention to the survey article \cite{BOURGAINSURVEY} where other applications to continued fractions are discussed. The lattice point count of Corollary \ref{maincoro} is also related to the work of Bourgain and Kontorovich \cite{BKBEYONDGEODESICS} on low lying fundamental geodesics on the modular curve.



\subsection{Ideas of proof}\label{rough}
In this section we give a suggestive proof, without any rigor, that the ideas of this paper work. We expect that the reader will find this much more useful than the technical details we will later give. Our strategy will be to create a toy dynamical system, explain how the ideas of Dolgopyat function here, and comment upon the passage to the equivariant setting of this paper. The notation we will use here is independent of the rest of the paper. Our discussion derives from that of Dolgopyat in \cite[Section 5]{DOLG}.

Consider dynamics of the form
\begin{equation}
\phi : \R \times K \to K 
\end{equation}
that corresponds to the action of $\R$ on compact metric and measure space $K$. Suppose we are also given a function $\tau : K \to \R$ which encodes that if we are at $x \in K$, our flow should move us forwards $\tau(x)$ in time. In other words we now have discrete time dynamics
\begin{equation}
\Phi : K \to K , \quad \Phi( x )\equiv  \phi( \tau(x) , x) .
\end{equation}
One is motivated, for various reasons\footnote{In our case, the transfer operator appears in the Laplace transform of the renewal equation (see Section \ref{renewalsection}).}\footnote{Another arises in the setting of Anosov flows where by a procedure of Dolgopyat from \cite{DOLG}, contraction properties of these operators are related to the decay of correlations of $\phi$.}, to study operators roughly of the form
\begin{equation}
L_{\theta} [h] (x) \equiv \sum_{ y : \: \Phi(y) = x }e^{i\theta\tau(y)}  h(y). 
\end{equation}
One is interested in large $\theta$ behavior of $L_\theta$. This is analogous, in light of well known connections to dynamical zeta functions,  to consideration of number-theoretic $L$-functions and in particular Riemann's zeta function, close to the edge of the critical strip at large imaginary values. The goal is to control large powers  $L_\theta^n$ in various operator norms that we will gloss over for now.

Let us consider briefly $\tau \equiv 1$. Then we are considering dynamics that arise from iterating\footnote{One key case here is that of Anosov diffeomorphisms.} the map $\Phi = \phi(1 , \bullet)$. One calculates
\begin{equation}
L^n_{\theta} [h] (x) = e^{i n \theta} \sum_{ y : \:\Phi^n(y) = x} h(y) .
\end{equation}
If one assumes \textit{that $\Phi$ is exponentially mixing}, then the sum over past histories for $x$ becomes very well distributed in $n$, so that $L^n_\theta$ tends to rapidly flatten functions, hence one can achieve the desired operator norm bounds. The $\theta$ dependence is irrelevant here.

Now we incorporate non constant $\tau$ into our discussion.
In the case of general $\tau$ one has
\begin{equation}
L^n_\theta[h] (x) = \sum_{ y  : \: \Phi^n(y) = x } e^{i \theta ( \tau(y) + \tau(\Phi(y) ) + \ldots + \tau ( \Phi^{n-1}(y) ) } h(y).
\end{equation}
Notice here that the summands now depend on the entire trajectory $y , \Phi y , \Phi^2 y$ etc. One would like to use exponential mixing properties of $\Phi$ but it is no longer obvious how to do so. 
Clearly the function
\begin{equation}
\tau^n(y) \equiv  \tau(y) + \tau(\Phi(y) ) + \ldots + \tau ( \Phi^{n-1}(y) )
\end{equation}
is going to feature in the subsequent analysis of $L_\theta$. Let us try to analyze\footnote{This $L^2$ analysis is converted into other norms by standard estimates.} the $L^2$ norm of $L_\theta^n h$. One gets formally
\begin{equation}
\int_K | L_\theta^n h |^2 dK = \int_{ K } \left( \sum_{  y_1 , y_2  : \: \Phi^n(y_i) = x  , \: i =1,2} e^{i \theta [ \tau^n(y_1) - \tau^n(y_2)] }  h(y_1) h (y_2)   \right) dK(x) .
\end{equation}
This can be converted into a sum over branches of $\Phi^{-n}$. Suppose there are finitely many functions $\a^n$ such that $\Phi^n \circ \a^n$ is the identity on $K$. Then interchanging the sum one gets (supposing $\Phi$ measure preserving)
\begin{equation}
\| L_\theta^n h \|_{L^2(K)}^2 = \sum_{ \a_1^n , \a_2^n  } \int_{K} e^{i \theta [ \tau^n(\a^n_1(k) ) - \tau^n(\a^n_2(k))  ] }h(\a^n_1(k) ) h (\a^n_2(k) )  dK(k) .
\end{equation}
The contribution from $\a_1^n$ and $\a_2^n$ that contain pairs of trajectories that remain close through the first $n$ iterates of $\Phi$ is controlled by exponential mixing, by estimating in absolute values. Note that this argument would work just fine if $h$ was a vector valued function. It remains to control contributions from $\a_1^n, \a_2^n$ for which all trajectories are separated, that is, the off-diagonal terms.

The ingredient that is to be provided, beyond exponential mixing, is that for example, it could be the case that away from the diagonal all of the functions
\begin{equation}\label{eq:branchs}
[\tau^n \circ \a_1^n - \tau^n \circ \a_2^n ]
\end{equation}
have derivatives that are bounded away from $0$. For large $\theta$, this would make estimation of
\begin{equation}
\int_{k \in K} e^{i \theta [ \tau^n(\a^n_1(k) ) - \tau^n(\a^n_2(k))  ] }h(\a^n_1(k) ) h (\a^n_2(k) )  dK(k)
\end{equation}
amenable to non-stationary phase considerations. This is  very loosely the case in our setting, with some very important qualifiers. The set $K$ is actually a Cantor set inside $\R$, which means that derivative considerations are still available, but the set being totally disconnected brings technical difficulties. One other obstacle is that the non local integrability assumption that we make, following Naud, only implies that we have a non vanishing derivative for two particular inverse branches (cf. Lemma \ref{NLIcons}). Cancellation in the full off-diagonal is achieved by a rather technical induction involving certain Dolgopyat operators. All of the arguments of Section \ref{operators} are to this end, which can be thought of as a very elaborated version of the Van der Corput Lemma.

We have now completed our description of the methods of Dolgopyat and Naud. The purpose of this paper is to extend these results to transfer operators that have been twisted by unitary operators on a finite dimensional vector space. These twists lie in a family throughout which we wish to seek uniform bounds.

So we now turn to the extension of our previous discussion regarding large $\theta$ to vector valued functions. Our twisted transfer operator will be of the form
\begin{equation}
L_{\theta,u }[h] (x) = \sum_{ y : \: \Phi(y) = x }e^{i\theta\tau(y)}  u(y) h(y) 
\end{equation}
which now acts on $V$-valued functions, with $V$ a finite dimensional complex vector space and $u : K \to U(V)$ a unitary valued function. We treat locally constant $u$. As we have remarked before, control of near-diagonal terms appearing in $\| L^n_{\theta , u}\|_{L^2(K ;V)}$ by exponential mixing still works.

The off-diagonal contribution to the (square of the) $L^2$ norm of $L^n_{\theta , u} h$ is again controlled by Van der Corput type arguments. The extension of this analysis to vector valued functions, in the setting considered by Naud \cite{NAUD}, is one new contribution of this paper. It turns out that the inductive procedure of Dolgopyat can be essentially unaltered, and we have pushed down the vector valued considerations to Lemma \ref{technical} where we do most of our work. We need, critically, to use that $u$ is locally constant so that it does not interfere with the previously known oscillations. This is analogous to dealing with flat vector bundles, that are defined by locally constant cocycles.

\subsection{Acknowledgements}
We would like to thank Peter Sarnak for his encouragement and support throughout this project. We would also like to thank Alex Kontorovich for the pointer to the work of Bourgain in \cite{BOURGAINPARTIAL}. We thank Jean Bourgain and Curt McMullen for helpful comments on an earlier
version of this paper.
\section{Bounds for transfer operators: large imaginary part}\label{largeimaginarypart} 

 Recall from the introduction the set $I$, $K$, the map $T: I \to \R$, the transition matrix $A$, the cocycle $c_0$ and $\G$. We keep these notations for Sections 2-4.
Let $\Sigma_A^{-}$ be the space of negatively indexed $A$-permissible sequences on $\{1 ,\ldots , k \}.$ We define for $\xi  \in \Sigma_A^{-}$ the function
\begin{equation}\label{eq:Delta}
\Delta_\xi( u ,v) = \sum_{ i = 0}^{\infty} \tau( T^{-1}_{\xi_{-i}} \circ \ldots \circ T^{-1}_{\xi_0} u ) -  \tau( T^{-1}_{\xi_{-i}} \circ \ldots \circ T^{-1}_{\xi_0} v )
\end{equation}
on $I_j \times I_j$ such that $T(I_{\xi_0}) \supset I_j$. It follows from the expanding property of $T$ that $\Delta_{\xi}$ is $C^1$ where it is defined. Naud (following others) has defined a temporal distance function
\begin{equation}\label{eq:temporaldistance}
\varphi_{\xi , \eta}(u,v)  = \Delta_\xi(u,v) - \Delta_\eta(u,v)
\end{equation}
which is defined for each $\xi , \eta \in \Sigma_A^{-}$ and $u , v \in I_j$ such that $T(I_{\xi_0}) \cap T(I_{\eta_0}) \supset I_j$. We treat $\tau$ with the following important property.

\begin{dfn}[Non local integrability (NLI)]\label{NLI}
An eventually positive function $\tau$ has property (NLI) if there is $j_0 \in \{ 1, \ldots , k\}$, $\xi , \eta \in \Sigma_A^-$ with $T(I_{\xi_0}) \cap T( I _{ \eta_0 } ) \supset I_{j_0}$ and $u_0 , v_0 \in K \cap I_{j_0}$ such that
\begin{equation}
\frac{ \partial \varphi_{\xi , \eta }}{\partial u } (u_0 , v_0) \neq 0.
\end{equation}
\end{dfn}

One novelty of this paper is the following uniform version of \cite[Theorem 2.3]{NAUD}.
\begin{prop}\label{bigimaginaryprop}
There is $b_0>0$ such that part \ref{mainsecond} of Theorem \ref{mainthm} holds. That is, for any $\eta >0$, there is $0 < \rho_\eta < 1$ such that
\begin{equation}
\| \L^m_{s,q}  \|_{C^1} \ll_\eta | b |^{1+\eta } \rho^m_\eta
\end{equation}
when $|b| > b_0$ and  $q \in \Q$, under the same hypotheses\footnote{No hypotheses on $\Q$ are really need here - in fact the $c_q$ could be any locally constant unitary valued map.} as Theorem \ref{mainthm}.
\end{prop}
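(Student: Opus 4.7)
The overall strategy is to adapt the Dolgopyat--Naud machinery for the scalar operator $\L_{-s\tau}$ to the vector-valued twisted operator $\L_{s,q}$, exploiting two features of $c_q$: it is unitary-valued (hence preserves pointwise norms and $L^2$ norms) and it is locally constant (hence, on a sufficiently fine cylinder partition, it reduces to a single constant matrix per cylinder). Together these ensure that all of Naud's scalar estimates lift to the vector-valued setting with constants that do not see $q$.

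My first step would be to establish two standard a priori bounds. For any $F \in C^1(I;\C^{\G_q})$ unitarity of $c_q(y)$ and the triangle inequality give the pointwise domination
\begin{equation}
|\L_{s,q} F|_V(x) \;\le\; \L_a(|F|_V)(x),
\end{equation}
from which $\|\L^n_{s,q} F\|_\infty \le \|\L^n_a\|_{\infty \to \infty}\,\||F|_V\|_\infty$ is controlled by the Ruelle-Perron-Frobenius theory for $a$ close to $s_0$. Next, a direct differentiation and the expansion $|(T^n)'| \ge D^{-1}\g^n$ yield a Lasota--Yorke type inequality
\begin{equation}
\|(\L^n_{s,q} F)'\|_\infty \;\le\; C|b|\,\||F|_V\|_\infty \;+\; C\g^{-n}\|F'\|_\infty,
\end{equation}
with constants independent of $q$. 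These two ingredients reduce the whole Proposition to establishing an $L^2$-contraction of the form $\|\L^{n_0}_{s,q} F\|_{L^2} \le \rho \|F\|_{L^2}$ for some $\rho = \rho_\eta < 1$, with $n_0 \sim \log|b|$; the $C^1$ bound with the factor $|b|^{1+\eta}$ then follows by interpolation and iteration in the standard Dolgopyat way.

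The heart of the argument is the $L^2$-contraction. Fix a cylinder depth $n_1 = O(\log|b|)$ large enough that (i) $c_q$ is constant on every cylinder of depth $n_1$ (possible because $c_q$ is locally constant, with threshold independent of $q$), and (ii) the Naud cylinders at scale $|b|^{-1}$ are refined by this partition. Expand $\|\L^{n_0}_{s,q} F\|_{L^2}^2$ as a sum over pairs of inverse branches $(\alpha_1^{n_0}, \alpha_2^{n_0})$, producing phases $e^{ib[\tau^{n_0}\circ\alpha_1 - \tau^{n_0}\circ\alpha_2]}$ against inner products $\langle c_q(\alpha_1(k))F(\alpha_1(k)), c_q(\alpha_2(k))F(\alpha_2(k))\rangle_{\C^{\G_q}}$. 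On each depth-$n_1$ cylinder the $c_q$ factors are constants, so Cauchy--Schwarz gives
\begin{equation}
|\langle c_q F(\alpha_1), c_q F(\alpha_2)\rangle| \;\le\; |F|_V(\alpha_1(k))\,|F|_V(\alpha_2(k)),
\end{equation}
a pointwise bound by a scalar quantity in which all $q$-dependence has disappeared. Near-diagonal pairs, where $\alpha_1$ and $\alpha_2$ stay $O(|b|^{-1})$-close throughout the iteration, are handled directly using exponential expansion. For the off-diagonal pairs, the temporal distance $\varphi_{\xi,\eta}$ appearing in the phase has $|\varphi'_{\xi,\eta}| \ge c > 0$ in some region (the consequence of (NLI), to be recorded as Lemma \ref{NLIcons} along the lines of \cite{NAUD}), so Naud's Dolgopyat operators $\mathcal{N}_J$ constructed from cylinder weights supported on the NLI region yield $\|\mathcal{N}_J\|_{L^2 \to L^2} \le \rho < 1$ with $\rho$ independent of both $b$ and $q$.

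The principal obstacle, and the place where the vector-valued structure genuinely intervenes, is verifying that the Dolgopyat weight construction is compatible with the twist. The point is that the weight functions on depth-$n_1$ cylinders are scalar and are applied to the scalar dominator $|F|_V$; because $c_q$ is a fixed unitary on each such cylinder, its action commutes with this scalar weight and does not degrade the oscillatory cancellation extracted from the phase. Making this rigorous is the vector-valued analog of the technical heart of Dolgopyat's argument and corresponds to the authors' Lemma \ref{technical}; it amounts to a careful bookkeeping that the cylinder approximations inserted in the Van der Corput step commute with $c_q$ to within an error absorbable by the expansion. Once that is in place, all other steps of Naud's proof transport verbatim, yielding the desired bound $\|\L^m_{s,q}\|_{C^1} \ll_\eta |b|^{1+\eta}\rho_\eta^m$ uniformly in $q \in \Q$.
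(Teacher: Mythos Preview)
Your overall architecture is the paper's: pass to the normalized operators, establish the a priori Lasota--Yorke bounds (your paragraph~2 is exactly Lemma~\ref{apriori}), reduce to an $L^2$ contraction (Lemma~\ref{powers}), and prove that via Dolgopyat operators together with the vector-valued technical lemma (Lemma~\ref{technical}). So the skeleton is right.

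However, paragraph~3 conflates the heuristic of Section~\ref{rough} with the actual proof, and this produces a real gap. The rigorous argument does \emph{not} expand $\|\L^{n_0}_{s,q}F\|_{L^2}^2$ into a double sum over branches and then separate near-diagonal from off-diagonal pairs. The difficulty the paper flags explicitly is that (NLI) only supplies a derivative lower bound for \emph{two specific} branches $\a_1^N,\a_2^N$ (Lemma~\ref{NLIcons}), not for generic off-diagonal pairs; the passage to the full $L^2$ contraction is achieved by the Dolgopyat operator induction (Lemma~\ref{core}, with the dense index set $\E_s$), not by a direct van der Corput on each pair. Your sentence ``Near-diagonal pairs \ldots are handled directly using exponential expansion'' has no counterpart in the proof and would not by itself give contraction.

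More seriously, your use of Cauchy--Schwarz, $|\langle c_qF(\a_1),c_qF(\a_2)\rangle|\le |F|_V(\a_1)|F|_V(\a_2)$, discards precisely the structure you need. In the scalar case the analogous inner product is $f(\a_1)\overline{f(\a_2)}$, a complex number of unit-circle argument that is \emph{slowly varying} (derivative $O(|b|\g^{-N})$), so the rapidly oscillating phase $e^{ibF}$ dominates. In the vector case the replacement quantity is $\langle u_1(x),u_2(x)\rangle$ where $u_i=c_q^N(\a_i^N\cdot)\,f(\a_i^N\cdot)/|f(\a_i^N\cdot)|$, and the crux of Lemma~\ref{technical} is a \emph{dichotomy}: either $|\langle u_1,u_2\rangle|<1/10$ somewhere, in which case the angle between $z_1,z_2$ is already large and the sharpened triangle inequality (Lemma~\ref{trig}) gives the gain directly; or $|\langle u_1,u_2\rangle|\ge 1/10$ throughout, in which case one can define its \emph{argument} $\phi$, bound $|\phi'|\le 160A|b|D\g^{-N}$ using that $(c_q^N)'\equiv 0$, and conclude that $\arg\Phi=bF+\phi$ still has derivative comparable to $|b|$. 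Your description of this step as ``bookkeeping that the cylinder approximations commute with $c_q$'' misses this dichotomy; without it one cannot rule out that the vector inner product rotates so as to cancel the phase $e^{ibF}$. This is the one genuinely new analytic point beyond Naud, and your sketch does not yet contain it.
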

We now show how to relate this Proposition to the construction of certain Dolgopyat operators.
Firstly we need the following Theorem. We state this following Naud \cite{NAUD} and the result can also be found in \cite{PPAST}.
\begin{thm}[Ruelle-Perron-Frobenius] \label{RPFTheorem}
Let $L(K)$ denote the Banach space of Lipschitz functions on $K$. For real valued $f \in L(K)$ let $\L_f$ act on $L(K)$ by 
\begin{equation}
\L_f [ G ](x)  = \sum_{ T y = x } e^{ f(y)} G(y) .
\end{equation}
Then
\begin{enumerate}
\item There is a unique probability measure $\nu_f$ on $K$ such that $\L_f^*(\nu_f) = e^{P(f) } \nu_f$.
\item The maximal eigenvalue of $\L_f$ is $e^{P(f)}$ which belongs to a unique positive eigenfunction $h_f \in L(K)$ with $\nu_f(h_f) = 1$.
\item The remainder of the spectrum of $\L_f$ is contained in a disc of radius strictly less than $e^{P(f)}$.
\end{enumerate}
\end{thm}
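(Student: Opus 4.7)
The plan is to follow the classical thermodynamic formalism route: produce the eigenmeasure by a fixed-point argument, construct the eigenfunction by averaging normalized iterates, and then obtain the spectral gap via a Lasota--Yorke inequality combined with the Ionescu-Tulcea--Marinescu theorem. Throughout, the engine is the \emph{bounded distortion} estimate: since $f\in L(K)$ and $T$ is eventually expanding with factor $\g>1$, for any $n$-cylinder $C$ and $x,y\in C$ one has $|S_nf(x)-S_nf(y)|\le C_0 \mathrm{Lip}(f)\, d(T^n x, T^n y)/(1-\g^{-1})$, which will be applied repeatedly.

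For part (1) and the existence of $h_f$, first consider the map $\Phi(\mu) = \L_f^*\mu/(\L_f^*\mu)(1)$ on the weak-$*$ compact convex set of Borel probability measures on $K$. Since $\L_f^*$ is weak-$*$ continuous and positive, Schauder--Tychonoff yields a fixed point $\nu_f$, with eigenvalue $\lambda := (\L_f^*\nu_f)(1)$. Uniqueness of $\nu_f$ follows from the bounded distortion estimate together with topological mixing of $(K,T)$ (encoded by $A^{p_0}>0$). To identify $\lambda = e^{P(f)}$, I would invoke the variational principle: upper-bound $\lambda^n = \nu_f(\L_f^n 1)$ by a sum over $n$-cylinders, apply distortion to rewrite the cylinder masses, and match with $P(f)=\sup_\mu(h_\mu(T)+\int f\,d\mu)$; the measure $\mu_f := h_f\,\nu_f$ will be the equilibrium state attaining the supremum. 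For the eigenfunction I would set
\begin{equation}
h_f(x) = \lim_{N\to\infty} \frac{1}{N}\sum_{n=0}^{N-1}\lambda^{-n}\L_f^n 1(x),
\end{equation}
show uniform Lipschitz bounds on $\lambda^{-n}\L_f^n 1$ via bounded distortion (so the family is equicontinuous and uniformly bounded below and above), and extract the limit by Arzelà--Ascoli; positivity and the eigen-relation $\L_f h_f = \lambda h_f$ follow from construction, and one normalizes so $\nu_f(h_f)=1$.

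For part (3), the spectral gap, the cleanest path is to pass to the normalized operator $\widehat{\L} g := \lambda^{-1} h_f^{-1}\L_f(h_f g)$, which satisfies $\widehat{\L}1 = 1$ and is dual to a Markov operator preserving $\mu_f$. The key estimate is a Lasota--Yorke inequality on $L(K)$:
\begin{equation}
\mathrm{Lip}(\widehat{\L}^n g) \le C\,\g^{-n}\,\mathrm{Lip}(g) + C\,\|g\|_\infty,
\end{equation}
proved by differentiating the sum defining $\widehat{\L}^n$, using the expansion bound $|(T^n)'|\ge D^{-1}\g^n$ to control the derivatives of inverse branches, and once again using bounded distortion to absorb the variation of the weights into the $\|g\|_\infty$ term. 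Since the inclusion $L(K)\hookrightarrow C(K)$ is compact, the Ionescu-Tulcea--Marinescu theorem then yields quasi-compactness of $\widehat{\L}$ on $L(K)$: the essential spectral radius is at most $\g^{-1}$, and the peripheral spectrum consists of finitely many eigenvalues on the unit circle. Topological mixing (via $A^{p_0}>0$) forces this peripheral spectrum to reduce to the simple eigenvalue $1$, and transferring the result back through the conjugation $g\mapsto h_f g$ gives the statement.

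The main obstacle is the Lasota--Yorke step, since the sum defining $\L_f^n$ has $O(\g^n)$ branches and naive differentiation produces an exponentially large contribution; the required cancellation comes only after one writes the derivative in terms of the inverse-branch derivatives, uses the multiplicative cocycle structure of $S_n f$, and applies the distortion estimate to bound the logarithmic derivatives of $\L_f^n h_f$ independently of $n$. The subsidiary technical point is to rule out non-trivial peripheral eigenvalues, which amounts to showing that the only solutions to $\widehat{\L} g = e^{i\theta} g$ with $|g|\equiv 1$ are constants; this follows from mixing of $(K,T)$ together with the equality case of the triangle inequality applied inside the definition of $\widehat{\L}$.
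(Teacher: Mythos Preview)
Your outline is the standard thermodynamic-formalism proof of Ruelle--Perron--Frobenius and is essentially correct as sketched. However, the paper does not prove this theorem at all: it is stated as a known black box, with the sentence ``We state this following Naud \cite{NAUD} and the result can also be found in \cite{PPAST}'' immediately preceding it, and no argument is supplied. So there is nothing to compare your approach against; you have simply reproduced (a sketch of) the classical proof that the paper is content to cite.
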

As in \cite{NAUD} we need to note that this Theorem extends reasonably to $\L_f$ acting on $C^1(I)$ given $f \in C^1(I)$. In particular $\L_f$ acting on $C^1(I)$ has the same spectral properties relative to a positive eigenfunction $h_f \in C^1(I)$ such that $\L_f h_f = e^{P(f) }h_f$. We also view $\nu_f$ as a measure on $C^1(I)$ with support in $K$.

Let $h_a$ be the normalized positive eigenfunction  of $\L_{-a\tau}$ corresponding to the maximal eigenvalue $\exp( P ( - a\tau ))$. We set
\begin{equation}
\tau_a = -a\tau - P(-a\tau) - \log( h_a \circ T) + \log(h_a) .
\end{equation}
We now renormalize our transfer operators by defining
\begin{equation}
L_{s,q} \equiv \L_{ \tau_a - ib\tau , q} .
\end{equation}
This is the same as
\begin{equation}\label{eq:perturbed}
L_{s,q} = \exp ( - P( -a\tau ) ) M_{h_a}^{-1} \L_{s,q} M_{h_a}
\end{equation}
where $M_{h_a}$ is multiplication by $h_a$. It now follows by arguments as in Naud \cite[pg. 132]{NAUD} that it is enough to prove Proposition \ref{bigimaginaryprop} and Theorem \ref{mainthm} with $L_{s,q}$ in place of $\L_{s,q}$. We also note here that the maximal eigenfunction of $L_a$ is the constant function, with eigenvalue $1$, that is $L_a 1 = 1$ for $a \in \R$.

The rest of the passage to the estimates in the next section is routine but we give some of the details for completeness. One shows that in order to prove Proposition \ref{bigimaginaryprop} it is enough to prove
\begin{lemma}\label{powers}
With the same conditions as Theorem \ref{mainthm}, there are $N >0$ and $\rho \in (0,1)$ such that when $|a - s_0|$ is sufficiently small and $|b|$ is sufficiently large we have
\begin{equation}
\int_{K} | L^{n N }_{s,q} W |^2 d  \nu_0 \leq \rho^n ,
\end{equation}
where $W \in C^1(I ; \C^{\G_q  } )$, $d\nu_0 = h_{ - \delta \tau} \nu_{-\delta \tau}$ is the Gibbs measure on $K$, and $\| W \|_{(b) } \leq 1$, which stands for the warped Sobolev norm
\begin{equation}
\| W \|_{(b) }  = \| W \|_\infty + |b|^{-1} \| W' \|_\infty.
\end{equation}
These estimates are uniform in $q$.
\end{lemma}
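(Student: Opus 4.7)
The plan is to implement the Dolgopyat--Naud scheme in the vector-valued setting, establishing the bound by induction on $n$. Expanding $L^N_{s,q}$ as a sum over depth-$N$ inverse branches of $T$ and multiplying out $|L^N_{s,q}W|^2$ yields a double sum indexed by pairs of such branches. I would split this into a \emph{near-diagonal} part (pairs whose trajectories remain exponentially close under all $N$ iterates) and an \emph{off-diagonal} part. The near-diagonal pairs are controlled by the classical Ruelle--Perron--Frobenius theory (Theorem \ref{RPFTheorem}) applied to $L_a$ for $a$ near $s_0$: passing to absolute values kills the unitary factor $c_q$, and the near-diagonal contribution is dominated by a term of the form $L_a^N|W|$, which contracts uniformly in $N$ on the relevant $L^2$ space.

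For the off-diagonal contribution I would exploit oscillation of $e^{-ib(\tau^N\circ\alpha^N_1 - \tau^N\circ\alpha^N_2)}$. The (NLI) hypothesis (Definition \ref{NLI}), via the consequence flagged as Lemma \ref{NLIcons} in the introduction, produces a pair of inverse branches whose phase difference has derivative uniformly bounded below on a fixed rectangle; iterating $T$ spreads this non-degeneracy to a positive proportion of depth-$N$ cylinders. A Van der Corput / non-stationary phase integration by parts then yields cancellation of size $|b|^{-1}$ on those cylinders, which is exactly matched by the $|b|^{-1}\|W'\|_\infty$ allowance in the warped Sobolev norm $\|W\|_{(b)}$. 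I would encode this as a family of Dolgopyat operators $\mathcal{N}_J$, indexed by subsets $J$ of depth-$N$ cylinders, each depressing a non-negative scalar majorant on $J$ by a factor $1-\varepsilon$. The argument then splits into: (i) constructing a cone $\mathcal{C}\subset C^1(I)$ of non-negative functions stable under every $\mathcal{N}_J$; (ii) showing that for every $W$ with $\|W\|_{(b)}\leq 1$ there exist $H\in\mathcal{C}$ and an admissible $J$ with $|L^N_{s,q}W|\leq \mathcal{N}_J H$ pointwise; and (iii) an $L^2(\nu_0)$ contraction $\int |\mathcal{N}_J H|^2\, d\nu_0\leq \rho\int H^2\, d\nu_0$ with $\rho<1$ uniform in $q$. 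Iterating $n$ times then gives the desired decay.

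The main obstacle is step (ii) in the vector-valued setting: the unitary cocycle $c_q$ must not interfere either with the Van der Corput cancellation or with the domination by a scalar function in $\mathcal{C}$. The crucial point is that $c_q$ is locally constant on the Markov partition, hence constant on every depth-$N$ cylinder, so $c_q\circ\alpha^N_i$ on each cylinder is a fixed unitary matrix. Taking modulus componentwise eliminates $c_q$ without disturbing the oscillating factor $e^{-ib\tau^N(\alpha^N_i(x))}$ that drives the cancellation. This is precisely the reduction packaged in Lemma \ref{technical} advertised in the introduction; once it is in place, parts (i) and (iii) go through as in the scalar arguments of \cite{NAUD}, and the uniformity in $q\in\Q$ is automatic because $c_q$ enters the estimates only through its unitarity and its locally constant structure, neither of which depends on $q$.
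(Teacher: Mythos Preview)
Your proposal is essentially correct and follows the paper's approach: the three properties (i)--(iii) you list are exactly the three parts of Lemma \ref{core}, and the iteration you describe is verbatim the paper's deduction of Lemma \ref{powers} from Lemma \ref{core}. You also correctly isolate the main vector-valued novelty as step (ii), resolved by the local constancy of $c_q$ (this is Lemma \ref{technical}).

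Two small corrections of emphasis. First, the near-diagonal/off-diagonal splitting in your opening paragraph is the heuristic from the paper's ``Ideas of proof'' section, not the actual mechanism; in particular $L_a^N|W|$ does \emph{not} contract on its own (its spectral radius is $1$ at $a=s_0$), and the $L^2$ contraction in (iii) comes entirely from the depression $\chi_J\le 1-\theta$ on a dense set of cylinders, not from any diagonal/off-diagonal decomposition. Second, the (NLI) consequence (Lemma \ref{NLIcons}) does not ``spread non-degeneracy to a positive proportion of depth-$N$ cylinders''; it produces exactly \emph{two} distinguished branches $\alpha_1^N,\alpha_2^N$ with controlled phase derivative, and the triadic partition of Proposition \ref{triad} is what guarantees that depressing the majorant along just these two branches, on a dense index set $J$, already yields the uniform $L^2(\nu_0)$ contraction.
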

This corresponds to \cite[Theorem 3.1]{OW} in the work of Oh and Winter and is the uniform version of \cite[Proposition 5.3]{NAUD}. 

Lemma \ref{powers} implies  Proposition \ref{bigimaginaryprop} by the use of a priori estimates for the transfer operators that allow one to convert an $L^2$ estimate into a $C^1$ bound. These estimates are given in \cite[Lemma 5.2]{NAUD} for complex valued functions. They are however easily proved for vector valued functions giving
\begin{lemma}\label{apriori} 
There are $\k_1 , \k_2 , a_0 , b_0 >0$ and $R < 1$ such that for $|a - s_0| <a_0$ and $|b | > b_0$ we have for all $f \in C^1(I ; \C^{\G_q})$
\begin{equation}\label{eq:apriori1}
\| [L^n_{s , q} f]' \|_\infty \leq \k_1 | b | \| L^n_a f \|_\infty + R^n \| L^n_a |f'| \|_\infty ,
\end{equation}
and
\begin{equation}\label{eq:apriori2}
\| L^n_{s_0 , q } f \|_\infty \leq \int_K |f| d  \nu_0 + \k_2 R^n \| f \|_{L(K) } .
\end{equation}
\end{lemma}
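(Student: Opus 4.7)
The plan is to transcribe Naud's scalar argument for \cite[Lemma 5.2]{NAUD} with the single observation that the unitary cocycle $c_q$ is both pointwise isometric and locally constant, so it participates neither in the derivative estimate nor in the sup-norm estimate, and the bounds are automatically uniform in $q$. Expanding $L_{s,q}^n f$ as a sum over inverse branches $\alpha^n$ of $T^n$,
\begin{equation*}
L_{s,q}^n f(x) = \sum_{\alpha^n} e^{(\tau_a-ib\tau)^n(\alpha^n(x))}\, c_q^n(\alpha^n(x))\, f(\alpha^n(x)),
\end{equation*}
the chain rule produces only two kinds of terms: those in which the derivative hits the exponential phase, giving a factor $[(\tau_a^n)'-ib(\tau^n)']\circ\alpha^n \cdot (\alpha^n)'$, and those in which it hits $f$, giving only $(\alpha^n)'$. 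The cocycle contributes no derivative because $c_q$ is locally constant, and that it is unitary lets me strip it from every Hermitian-norm estimate below.

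For the first estimate \eqref{eq:apriori1}, the bounded distortion argument, which uses $|(T^j\circ\alpha^n)'(x)| \leq D\gamma^{-(n-j)}$ together with $\tau,\tau_a \in C^1$, gives $|(\tau^n\circ\alpha^n)'(x)|$ and $|(\tau_a^n\circ\alpha^n)'(x)| \leq C$ uniformly in $n$ and $\alpha^n$. The phase-derivative contribution at each $\alpha^n$ is thus bounded in modulus by
\begin{equation*}
(C_1 + |b| C_2)\,e^{\tau_a^n(\alpha^n(x))}\,|c_q^n(\alpha^n(x)) f(\alpha^n(x))| = (C_1 + |b|C_2)\,e^{\tau_a^n(\alpha^n(x))}\,|f(\alpha^n(x))|,
\end{equation*}
after invoking unitarity of $c_q^n$. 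The $f$-derivative contribution gains an additional factor $|(\alpha^n)'(x)| \leq D\gamma^{-n}$. Summing over $\alpha^n$ and taking the supremum yields
\begin{equation*}
\|[L_{s,q}^n f]'\|_\infty \leq (C_1 + |b|C_2)\,\|L_a^n |f|\|_\infty + D\gamma^{-n}\,\|L_a^n |f'|\|_\infty,
\end{equation*}
and for $|b| > b_0$ the constant $C_1$ is absorbed into $|b|C_2$, giving \eqref{eq:apriori1} with $R := \gamma^{-1}$.

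For \eqref{eq:apriori2}, since $s = s_0$ is real the kernel of $L_{s_0,q}^n$ carries no oscillatory factor, so unitarity of $c_q^n$ and the triangle inequality give the pointwise bound
\begin{equation*}
|L_{s_0,q}^n f(x)| \leq \sum_{T^n y = x} e^{\tau_{s_0}^n(y)}\,|f(y)| = L_{s_0}^n(|f|)(x).
\end{equation*}
By the normalization \eqref{eq:perturbed} the scalar operator $L_{s_0}$ fixes the constant function $1$ with eigenvalue $1$ and has $\nu_0$ as its maximal eigenmeasure, so the spectral gap in Theorem \ref{RPFTheorem} (in its $C^1$ extension) yields $\|L_{s_0}^n g - \nu_0(g)\|_\infty \leq \kappa_2 R^n \|g\|_{L(K)}$ for every Lipschitz $g$ and some $R < 1$. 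Applied with $g = |f|$, whose Lipschitz seminorm is dominated by $\|f\|_{L(K)}$, this gives \eqref{eq:apriori2}. The passage to vector-valued $f$ presents no substantive obstacle: the two properties of $c_q$ used here (isometry and local constancy) are precisely those that make this uniform-in-$q$ analogue of Naud's scalar estimate essentially mechanical. The real technical work of this section lies in Lemma \ref{powers}, where the off-diagonal Dolgopyat cancellations have to be propagated uniformly in the congruence parameter.
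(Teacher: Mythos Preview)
Your proof is correct and is exactly the approach the paper intends: the paper does not give a detailed argument for Lemma \ref{apriori} but simply says the estimates ``are however easily proved for vector valued functions'' from Naud's scalar version, and what you have written is precisely that transcription, hinging on the two facts that $c_q^N$ is unitary (so it drops out of every pointwise norm) and locally constant (so it contributes nothing under differentiation). One cosmetic remark: your derivation naturally produces $\|L_a^n|f|\|_\infty$ rather than the paper's $\|L_a^n f\|_\infty$ in \eqref{eq:apriori1}, which is the correct quantity and is what is actually used downstream; also, writing $R:=\gamma^{-1}$ ignores the constant $D$ in $D\gamma^{-n}$, but any $R\in(\gamma^{-1},1)$ absorbs $D$ for $n$ large, which is all that is needed.
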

Lemma \ref{apriori} together with Lemma \ref{powers} imply Proposition \ref{bigimaginaryprop} by arguments  appearing in \cite[pp. 133-134]{NAUD}. Roughly speaking the ingredients are Cauchy-Schwarz to access Lemma \ref{powers}, remarks regarding the behaviour of $\tau_a^m$ for $a$ close to $s_0$ that appear elsewhere in this paper, and splitting up exponents in the form $m = nN + r$.

The proof of Lemma \ref{powers} proceeds through the construction of certain Dolgopyat operators that we give in the next section. 
\section{Construction of uniform Dolgopyat operators}\label{operators}
We follow the notation of Naud \cite{NAUD}. For $A >0$ we consider the cone
\begin{equation}
\CC_A \equiv \{ H \in C^1(I) : H > 0 \text{ and } |H'(x)| \leq A H(x) \: \text{ for all } x \in I \:\}.
\end{equation}
In this section we establish a uniform version of the key Lemma of Naud \cite[Lemma 5.4]{NAUD}. This is also analogous to \cite[Theorem 3.3]{OW}.
\begin{lemma}[Construction of uniform Dolgopyat operators]\label{core}
Suppose $\tau$ has the (NLI) property. There exists $N > 0 , $ $A > 1$ and $\rho \in  (0,1)$ such that for all $s= a+ib$ with $|a - s_0|$ small and $|b| > b_0$ large, there exists a finite set of operators $(\N_s^J)_{J \in \E_s }$ that are bounded on $C^1(I)$ and satisfy the following three conditions
\begin{enumerate}
\item \label{corefirst} The cone $\CC_{A|b|}$ is stable by $\N^J_s$ for all $J \in \E_s$.
\item \label{coresecond} For all $H \in \CC_{A|b|}$ and all $J \in \E_s$,
\begin{equation}
\int_K |\N^J_s H |^2 d \nu_0 \leq \rho \int_K | H |^2 d \nu_0 .
\end{equation}
\item \label{corethird} Given $H \in \CC_{A|b| }$ and $f \in C^1( I ; \C^{\G_q} )$ such that $|f| \leq H$ and $|f'| \leq A|b|H$, there is $J \in \E_s$ with
\begin{equation}
| L^N_{s,q} f | \leq \N_s^J H , \quad \text{and} \: \: |  ( L^N_{s,q} f )' | \leq A|b| \N^J_s H .
\end{equation}
\end{enumerate}
\end{lemma}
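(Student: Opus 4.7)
The strategy is to build the Dolgopyat operators in the explicit form
\[
\N^J_s H(x) = L^N_a\bigl[(1-\chi_J)\,H\bigr](x),
\]
where $N$ is a large iteration depth, $\chi_J$ is a nonnegative $C^1$ bump built from a finite menu of ``cancellation balls'' at spatial scale $|b|^{-1}$, and $J$ indexes a choice, at each ball, of one branch out of a designated pair of inverse branches of $T^N$. The index set $\E_s$ enumerates these finitely many branch selections. Since $L_a 1 = 1$ by the renormalization \eqref{eq:perturbed}, and since $L_a^N$ satisfies the Ruelle--Perron--Frobenius spectral picture of Theorem \ref{RPFTheorem}, cone stability in item (\ref{corefirst}) will follow once the bumps $\chi_J$ are chosen with amplitude a small fixed $\mu<1$ and width $\sim|b|^{-1}$, so that $(1-\chi_J)H$ lies in a slightly enlarged form of $\CC_{A|b|}$ for appropriate $A$.

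First I would cash in the (NLI) hypothesis: pick $j_0$, $u_0,v_0\in K\cap I_{j_0}$ and inverse branches $\xi,\eta$ with $\partial_u\varphi_{\xi,\eta}(u_0,v_0)\neq 0$, and use continuity to secure a lower bound $|\partial_u\varphi_{\xi,\eta}|\geq c_0>0$ on a neighborhood $U\subset I_{j_0}$. For $N$ large and each inverse branch of $T^N$ factoring through the truncations of $\xi$ or $\eta$, the eventually expanding property contracts $U$ by $\gamma^{-N}$, producing many ``twin pairs'' $\alpha_{\ell,1}^N,\alpha_{\ell,2}^N$ of inverse branches whose images fall into a common small ball of $K$ with temporal distance of derivative $\gtrsim c_0$. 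A maximal $|b|^{-1}$-separated collection of such twin-pair centers in $K$ gives $\asymp |b|$ cancellation balls $B_\ell$, each carrying a $C^1$ bump $\chi_{\ell,i}$ of amplitude $\mu$ and width $\sim|b|^{-1}$; then $\chi_J=\sum_\ell\chi_{\ell,J(\ell)}$. Property (\ref{coresecond}) reduces to a uniform lower bound on $\nu_0\bigl(\bigcup_\ell B_\ell\bigr)$, which is the density lemma of Naud \cite{NAUD} for Gibbs measures on $K$ with bounded distortion and transports verbatim, being entirely independent of $q$ since $\chi_J$ is.

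The only place where the vector-valued setting makes a real difference is (\ref{corethird}). Given $f$ with $|f|\leq H$ and $|f'|\leq A|b|H$, expand
\[
L^N_{s,q}f(x)=\sum_{\alpha^N}e^{\tau_a^N(\alpha^N(x))-ib\tau^N(\alpha^N(x))}\,c_q(\alpha^N(x))\,f(\alpha^N(x)).
\]
The triangle inequality gives $|L^N_{s,q}f|\leq L^N_a H$ globally. On each cancellation ball $B_\ell$, the crucial observation is that $c_0$, hence $c_q$, is \emph{locally constant} on $\{I_j\}$; for $N$ large the images $\alpha_{\ell,i}^N(B_\ell)$ sit well inside a single $I_j$, so $c_q\circ\alpha_{\ell,i}^N$ is literally constant on $B_\ell$ and can be pulled out of the two-branch subsum. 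The $\C^{\G_q}$-norm of this subsum is thereby dominated by the absolute value of a scalar expression $|w_1(x) + e^{ib\varphi_\ell(x,\cdot)}w_2(x)|$, with $|w_i|$ bounded by the corresponding weight in $L^N_a H$. By (NLI) the phase has nonvanishing derivative, and at scale $|b|^{-1}$ the function $b\varphi_\ell$ is essentially linear on $B_\ell$, so a Van der Corput / non-stationary phase estimate yields a fixed-fraction saving on $B_\ell$. Choosing $J$ so that $J(\ell)$ selects the branch with the larger of the two summands matches this saving exactly against $\mu\chi_J H$, yielding $|L^N_{s,q}f|\leq\N^J_s H$. The derivative bound comes from the same expansion together with Lemma \ref{apriori}, with $A$ taken large enough to absorb $\kappa_1$.

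The main obstacle will be the simultaneous tuning of $(N,A,\mu,\rho)$ so that the bumps are narrow enough to preserve the cone (\ref{corefirst}), numerous enough to force $\rho<1$ in (\ref{coresecond}), and deliver Van der Corput gain sufficient to beat the derivative allowance $A|b|$ in (\ref{corethird}). This is the standard Dolgopyat--Naud inductive bookkeeping; the essential point beyond \cite{NAUD} is that \emph{every parameter is independent of $q$}, because $c_q$ enters the cancellation argument only as a locally constant unitary that factors out. This independence is what upgrades Naud's estimate to the uniform-in-$q$ form of Lemma \ref{core}.
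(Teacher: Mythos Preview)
Your overall architecture is right, and in particular you have correctly isolated that the only genuinely new work is in property (\ref{corethird}), with local constancy of $c_q$ being the mechanism that makes the construction $q$-independent. But the step where you ``pull out'' the cocycle and reduce the two-branch subsum to a scalar expression $|w_1(x)+e^{ib\varphi_\ell(x,\cdot)}w_2(x)|$ is a genuine gap. The two unitaries $c_q^N(\alpha_1^N x)$ and $c_q^N(\alpha_2^N x)$ are each constant on the ball, but they are \emph{different} unitaries, so the subsum is $z_1+z_2$ with $z_i$ genuinely vector-valued and pointing in unrelated directions in $\C^{\G_q}$. There is no scalar phase to run Van der Corput on. What local constancy actually buys is that the \emph{derivative} of $c_q^N\circ\alpha_i^N$ vanishes; this controls $|u_i'|$ for the unit vectors $u_i=c_q^N(\alpha_i^N x)f(\alpha_i^N x)/|f(\alpha_i^N x)|$, hence controls the variation of $\langle u_1,u_2\rangle$ across the ball. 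One then needs a dichotomy: either $|\langle u_1,u_2\rangle|$ is uniformly small (and the sharp triangle inequality gives the saving with no phase argument at all), or $|\langle u_1,u_2\rangle|$ is bounded below, in which case $\arg\langle u_1,u_2\rangle$ is well-defined and slowly varying, and only then does the NLI phase $b(\tau^N\circ\alpha_1^N-\tau^N\circ\alpha_2^N)$ drive the cancellation. Your sketch collapses these two cases into one and loses the first.

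Two structural points also need repair. First, the construction uses a \emph{single} distinguished pair $\alpha_1^N,\alpha_2^N$ of inverse branches (those furnished by NLI via Lemma \ref{NLIcons}), together with many spatial balls $V_j$ at scale $|b|^{-1}$; the index set is $\{1,2\}\times\{j\}$, not a collection of many twin pairs. Second, the balls must be organised so that those meeting $K$ come in \emph{triads} of consecutive intervals (Naud's triadic partition). The triad is what makes the phase argument close: the NLI derivative bound forces $\arg\Phi$ to move by a definite amount across three consecutive balls, so $\Re\Phi$ cannot stay near $1$ on all three, and this is what produces a \emph{dense} $J\in\E_s$. Your rule ``select the branch with the larger summand'' does not by itself yield density, which is exactly what property (\ref{coresecond}) needs.
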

When we write $|f|$ for $f \in C^1( I ; \C^{\G_q} )$ we refer to the function obtained by taking pointwise Euclidean ($l^2$) norms.
We now show that the existence of these operators implies Lemma \ref{powers}.
\begin{proof}[Lemma \ref{core} implies Lemma \ref{powers}] 

Given this construction (Lemma \ref{core}), Lemma \ref{powers} is proved following the argument of \cite[pg. 21]{OW} or one in \cite[pg. 135]{NAUD}.
Indeed given non zero $f \in C^1(I ; \C^{\G_q})$ with $\| f \|_{(b)} \leq 1$ (cf. Lemma \ref{powers} for the definition of $\| \|_{(b) }$), we define 
\begin{equation}
H = \| f \|_{(b)} 1.
\end{equation}
One sees that $H$ and $f$ are as in Lemma \ref{core}, that is,  $H \in \CC_{A|b| }$, $|f| \leq H$,  and  $| f'| \leq A|b| H$ as $A > 1$. One gets then by part \ref{corethird} of Lemma \ref{core} that
\begin{equation}
| L^N_{s,q} f | \leq \N^J_s H ,\quad | (L^N_{s,q} f)' | \leq A|b| \N^J_s H
\end{equation} 
for some $J \in \E_s$. Since $\CC_{A|b|}$ is stable under the $N^J_s$ one can repeat this to get for some sequence $J_1 , \ldots , J_n \in \E_s$ that
\begin{equation}
\int_K | L^{nN}_s f |^2 d\nu_0 \leq \int_K | \N^{J_N}_s \ldots \N^{J_1}_s H |^2 d\nu_0 \leq \rho^n \int_K |H|^2 d\nu_0 \leq \rho^n 
\end{equation}
by using part \ref{coresecond} of Lemma \ref{core}.
\end{proof}

The first two properties of Lemma \ref{core} were proved by Naud in \cite{NAUD}; we follow closely Naud's construction of the operators in the following.

\subsection{Consequences of non local integrability (NLI)}

Naud notes the following consequence of (NLI) that we will use later.
\begin{lemma}[Proposition 5.5 of \cite{NAUD}]\label{NLIcons}
If $\tau$ has property (NLI), there are $m , m',  N_0 >0$ such that for all $N > N_0$, there are two branches $\a_1^N, \a_2^N$ of $T^{-N}$ with
\begin{equation}
m' \geq \left| \frac{d}{du} [ \tau^N \circ \a_1^N  - \tau^N \circ \a_2^N ]( u) \right| \geq m >0,  \quad \forall u \in I.
\end{equation}
\end{lemma}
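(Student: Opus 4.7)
The plan is to obtain $\a_1^N,\a_2^N$ as length-$N$ truncations of the infinite symbolic sequences $\xi,\eta$ supplied by (NLI), and to identify the derivative of $\tau^N\circ\a_1^N-\tau^N\circ\a_2^N$ as a finite approximation of $\partial_u\varphi_{\xi,\eta}$. Setting $\a_1^N \equiv T^{-1}_{\xi_{-N+1}}\circ\cdots\circ T^{-1}_{\xi_0}$ (and $\a_2^N$ analogously from $\eta$), the identity $T^{j}\a_1^N = T^{-1}_{\xi_{-N+1+j}}\circ\cdots\circ T^{-1}_{\xi_0}$ gives, after reindexing $i = N-1-j$,
\begin{equation*}
\tau^N(\a_1^N(u))\;=\;\sum_{i=0}^{N-1}\tau\bigl(T^{-1}_{\xi_{-i}}\circ\cdots\circ T^{-1}_{\xi_0}(u)\bigr),
\end{equation*}
so that $\tau^N\circ\a_1^N(u)-\tau^N\circ\a_1^N(v)$ is exactly the $N$-truncation of $\Delta_\xi(u,v)$. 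Since each summand of $\partial_u\Delta_\xi$ is visibly independent of $v$, so is $\partial_u\varphi_{\xi,\eta}$, and
\begin{equation*}
\tfrac{d}{du}\bigl[\tau^N\circ\a_1^N - \tau^N\circ\a_2^N\bigr](u)\;=\;\partial_u\varphi_{\xi,\eta}(u,v_0)\;+\;R_N(u), \qquad |R_N(u)|\le C\g^{-N},
\end{equation*}
uniformly on the common domain $T(I_{\xi_0})\cap T(I_{\eta_0})\supset I_{j_0}$, by the expansion bound $|(T^N)'|\ge D^{-1}\g^N$.

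The upper bound $m'$ then follows at once: every $\tfrac{d}{du}[\tau^N\circ\a_i^N]$ is dominated by the geometric series $\|\tau'\|_\infty\sum_{k\ge 1}D\g^{-k}$, uniformly in $N$. For the lower bound, (NLI) gives $\partial_u\varphi_{\xi,\eta}(u_0,v_0) = c\ne 0$, and $\partial_u\varphi_{\xi,\eta}$ is $C^0$ on $I_{j_0}$ since its defining series converges uniformly at geometric rate. Hence $|\partial_u\varphi_{\xi,\eta}(u,v_0)|\ge |c|/2$ on an open neighbourhood $U\ni u_0$, and choosing $N_0$ with $C\g^{-N_0}<|c|/4$ gives $|\tfrac{d}{du}[\tau^N\circ\a_1^N-\tau^N\circ\a_2^N]|\ge |c|/4=:m$ on $U$ for all $N\ge N_0$.

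The anticipated main obstacle is enlarging this lower bound from $U$ to the full domain required by the statement. The plan is to exploit the topological mixing $A^{p_0}>0$: for each of the finitely many intervals $I_j$ choose a permissible word $w_j$ of bounded length $p_j\le p^*$ giving an inverse branch $\beta_j$ of $T^{p_j}$ that sends $I_j$ into $U$. Such a $w_j$ exists because $p_0$-step mixing allows us to jump from $j$ into $j_0$, and cylinders of $K$ based at $u_0$ become arbitrarily small under further iteration, eventually landing inside the open set $U$. Then $\a_1^N\circ\beta_j,\a_2^N\circ\beta_j$ are inverse branches of $T^{N+p_j}$ on $I_j$, and the telescoping $\tau^{N+p_j}\circ\a_i^N\circ\beta_j = \tau^N\circ\a_i^N\circ\beta_j + \tau^{p_j}\circ\beta_j$ cancels the prepended piece in the difference, giving
\begin{equation*}
\tfrac{d}{du}\bigl[\tau^{N+p_j}\circ\a_1^N\circ\beta_j - \tau^{N+p_j}\circ\a_2^N\circ\beta_j\bigr](u)\;=\;\bigl(\partial_u[\tau^N\circ\a_1^N - \tau^N\circ\a_2^N]\bigr)(\beta_j(u))\cdot\beta_j'(u).
\end{equation*}
The first factor is $\ge m$ since $\beta_j(u)\in U$, and $|\beta_j'(u)|\ge c_{p^*}>0$ by bounded distortion of smooth Markov inverse branches of bounded depth; replacing $m$ by $m\cdot c_{p^*}$ and $N$ by $N+p^*$ yields the claimed uniform lower bound. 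The principal technical burden lies in verifying the existence of $\beta_j$ with the required contraction and target properties, but this is a standard consequence of topological mixing combined with the Cantor-set structure of $K$ and the explicit expansion rate.
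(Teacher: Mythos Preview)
Your proposal follows essentially the same strategy as Naud's proof (which the paper merely cites): truncate $\xi,\eta$ to get candidate branches, identify the derivative of $\tau^N\circ\a_1^N-\tau^N\circ\a_2^N$ with a partial sum of $\partial_u\varphi_{\xi,\eta}$, secure the lower bound on a neighbourhood of $u_0$, then extend to all of $I$ by precomposing with an inverse branch landing in that neighbourhood. The upper bound and the reduction to a neighbourhood $U$ are fine as written.

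There is, however, a real gap in the extension step. You choose a \emph{different} precomposing branch $\beta_j$ on each $I_j$, of possibly different depths $p_j$. The resulting maps $\a_i^N\circ\beta_j$ are then branches of $T^{-(N+p_j)}$ with $p_j$ varying across $j$; even if you equalise the depths to $p^*$, the words $w_j$ still differ, so the piecewise-defined map $u\mapsto \a_i^N(\beta_j(u))$ for $u\in I_j$ is \emph{not} a single branch of $T^{-(N+p^*)}$. The statement asks for bona fide branches $\a_1^N,\a_2^N$, and this is not a cosmetic point: downstream, in the construction of the Dolgopyat operators, $\a_1^N$ and $\a_2^N$ must appear as two specific summands in the sum $\sum_{\a^N}$ over all inverse branches of $T^N$ (see the proof of property~\ref{corethird} of Lemma~\ref{core}). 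That only makes sense if each is specified by a single admissible word of length $N$.

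Naud's repair, which the paper records immediately after stating the lemma, is to construct a \emph{single} branch $\psi:I\to\U$ of $T^{-\hat p}$ (one fixed word, one fixed depth $\hat p$) whose image lies in the good neighbourhood, and then set $\a_i^N=\beta_i^{N-\hat p}\circ\psi$. Your mixing-plus-contraction argument is exactly what is needed to produce such a $\psi$, but you must arrange for the \emph{same} word to serve on every $I_j$ simultaneously rather than tailoring a separate $\beta_j$ to each component.
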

We remark here that the lower bound is the harder one. The upper bound follows from the expanding property of $T$ and regularity of $\tau$.

Now suppose we deal with $\tau$ with property (NLI). Let $\xi , \eta, u_0, v_0$ and $j_0$ be as in Definition \ref{NLI}. \textit{Throughout the rest of this paper, the assignments $N \to \a_1^N$ and $N \to \a_2^N$ are fixed as those given by Lemma \ref{NLIcons}. } We do however need to know some of the details about how the $\a_i^N$ have been constructed, which we give now.

As in the proof of \cite[Proposition 5.5]{NAUD} there are $\e > 0$ and an open interval  $\U$ with
\begin{equation}
I_{j_0} \supset \U \ni u_0 
\end{equation}
such that
\begin{equation}
\left| \frac{ \partial \varphi_{\xi , \eta }}{\partial u } (u' , v_0) \right| > \e
\end{equation}
for all $u' \in \U$.
We define for any $n$
\begin{equation}
\beta_1^n = T^{-1}_{\xi_{-n+1}} \circ \ldots \circ T_{\xi_0}^{-1},
\end{equation}
\begin{equation}
\beta_2^n = T^{-1}_{\eta_{-n+1}} \circ \ldots \circ T_{\eta_0}^{-1},
\end{equation}
two branches of $T^{-n}$ on $I_{j_0}$. In the proof of \cite[Proposition 5.5]{NAUD}, Naud also constructs \begin{equation}
\psi : I \to \U 
\end{equation}
which is a branch of $T^{- \hat{p} }$ for some $\hat{p}$ a fixed positive integer related to the mixing and expanding properties of $T$. The image of $\psi$ is a disjoint union of $k$ closed intervals each of which is diffeomorphic to some $I_j$ by $\psi$. We denote by $U_0$ the image of $\psi$.
 We will use the parameterization
\begin{equation}
N = \tilde{N } + \hat{p}, 
\end{equation}
Then the $\a_i^N$ are defined by
\begin{equation}
\a_i^N = \beta_i^{\tilde{N} } \circ \psi .
\end{equation}
As $\tilde{p}$ is fixed, $\tilde{N}$ and $N$ are coupled. They are to be chosen, depending on $b$ and other demands in the following.

\subsection{Construction of Dolgopyat operators}
The following is proved by Naud \cite[Proposition 5.6]{NAUD}.

\begin{prop}[Triadic partition] \label{triad}
There are $A_1 , A'_1 > 0$ and $A_2 > 0$ such that when $\e > 0$ is small enough, there is a finite collection $(V_i)_{1 \leq i \leq Q}$ of closed intervals ordered along $U_0$ such that:
\begin{enumerate}
\item $\U \supset \cup_{i =1}^Q V_i \supset U_0$, $V_i \cap \Int U_0 \neq \emptyset$ for all $i$ and $\Int V_i \cap \Int V_j = \emptyset$ when $i \neq j$.
\item  \label{second} For all $1 \leq i \leq Q$, $\e A'_1 \leq | V _i | \leq \e A_1 $.
\item  For all $1 \leq j \leq Q$ with $V_j \cap K \neq \emptyset$, either $V_{j-1} \cap K \neq \emptyset$ and $V_{j+1} \cap K \neq \emptyset $ or  $V_{j-2} \cap K \neq \emptyset$ and $V_{j-1} \cap K \neq \emptyset $ or  $V_{j+1} \cap K \neq \emptyset$ and $V_{j+2} \cap K \neq \emptyset $  . In other words, intervals that intersect $K$ come at  least in triads.
\item \label{fourth} For all $1 \leq i \leq Q$ with $V_i \cap K \neq \emptyset$, $V_i \cap K \subset U_0$ and $\mathrm{dist}( \partial V_i, K) \geq A_2|V_i|$.
\end{enumerate}
\end{prop}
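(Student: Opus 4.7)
The plan is to reproduce the construction of Naud \cite[Proposition 5.6]{NAUD}, which the statement directly parallels. Fix $\e>0$ small. Begin with the regular grid on $\R$ by closed intervals of length $\e$ with pairwise disjoint interiors, and retain only those $V_i$ that meet $\Int U_0$, truncating any that protrude beyond $\U$. This yields a finite family $V_1,\dots,V_Q$, ordered along $U_0$, with $\U\supset\cup_i V_i\supset U_0$ and $V_i\cap \Int U_0\ne\emptyset$, so property 1 holds. Only the boundedly many intervals that touch $\partial\U$ undergo boundary truncation, and each truncation cuts off at most a fixed proportion, so the length bound $\e A_1'\le |V_i|\le \e A_1$ of property 2 holds with uniform constants.

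For property 4 we use the key structural fact $K\cap\U\subset U_0$: since $\psi$ is a branch of $T^{-\hat{p}}$ and $T$ is Markov, the images $\psi(I_j)$ are the only pieces of $\U$ reachable by $K$. In particular $K$ meets $\partial U_0$ in only finitely many points, which we can push into the interior of some $V_i$ by translating the grid by an arbitrarily small amount (this does not disturb properties 1 and 2). After an additional proportional trim of each $V_i$ on both sides by a factor depending only on $A_1,A_1'$, every $V_i$ with $V_i\cap K\ne\emptyset$ satisfies both $V_i\cap K\subset U_0$ and $\mathrm{dist}(\partial V_i,K)\ge A_2|V_i|$ for a fixed $A_2>0$.

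The triad condition 3 rests on the fact that $K$ is a perfect Cantor set arising from a topologically mixing expanding Markov system: there is a constant $c>0$, depending only on $(T,I)$, such that for $\e$ sufficiently small, any $x\in K\cap U_0$ with $\mathrm{dist}(x,\partial U_0)\ge A_2\e$ has points of $K$ within $c\e$ on each side (this is the local non-isolation of $K$, which follows from the Markov structure together with the bounded distortion of inverse branches of $T^n$). Applied to a point of $V_j\cap K$, this forces at least one of $V_{j\pm 1}$ to meet $K$; iterating the argument at the neighbor produces one of the three enumerated triad configurations.

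The main obstacle is coordinating properties 3 and 4 near $\partial U_0$: a truncation strong enough to guarantee $\mathrm{dist}(\partial V_i,K)\ge A_2|V_i|$ could in principle destroy triads. The resolution, as in Naud, is that the strict inclusion $K\cap\U\subset U_0$ together with the positive separation $\mathrm{dist}(K,\partial U_0)>0$ (a consequence of $K$ being a Cantor subset of the finite union of closed intervals $\psi(I_j)$, possibly after the small translation above) implies that the $V_i$ needing boundary truncation lie at positive distance from $K$ for $\e$ small, and hence do not participate in any triad. Thus the truncations required for property 4 leave the local structure near $K$ untouched, and property 3 is preserved.
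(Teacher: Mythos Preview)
The paper does not prove this proposition itself; it cites Naud \cite[Proposition 5.6]{NAUD} directly. Your outline follows the right shape, but two steps do not go through as written.

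For property 4, a uniform ``proportional trim'' cannot deliver $\mathrm{dist}(\partial V_i,K)\ge A_2|V_i|$: shrinking every $V_i$ by a fixed fraction either destroys the covering $\bigcup_i V_i\supset U_0$ or, if the trimmed intervals still abut, does nothing to prevent a point of $K$ from sitting on or arbitrarily near a shared endpoint. The mechanism that actually works is to place the endpoints of the $V_i$ inside \emph{gaps} of $K$, using the uniform porosity of a dynamically defined Cantor set (there is $c>0$ so that every interval of length $\ell$ contains a $K$-free subinterval of length $\ge c\ell$); one slides each tentative grid endpoint into such a gap of width $\asymp\e$. Separately, your claim $K\cap\U\subset U_0$ is unjustified and in general false, since $\U$ was fixed by the NLI condition before $\psi$ was chosen and other cylinders of $K$ may lie in $\U\setminus U_0$. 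What is true and sufficient is that $U_0=\psi(I)$ is a cylinder set, so $K\cap U_0$ lies at a fixed positive distance from $K\setminus U_0$; for $\e$ small the $V_i$ near $U_0$ then cannot meet $K\setminus U_0$.

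For property 3 the gap is more basic. The assertion that every $x\in K\cap U_0$ (away from $\partial U_0$) has points of $K$ within $c\e$ on \emph{each} side is false for Cantor sets: any endpoint of a gap of $K$ (think of $1/3$ in the middle-thirds set) has no $K$-points on one side for a definite distance, and such gap-endpoints are dense in $K$. Perfectness only supplies nearby $K$-points on at least one side, and even that does not force $V_{j\pm1}$ to meet $K$, since those nearby points may all lie in $V_j$ itself; your iteration can therefore stall with only two consecutive $V$'s meeting $K$, which is none of the three listed configurations. In Naud's construction the triads are engineered rather than deduced: the $V_i$ are built from cylinders of a depth chosen so that their lengths are comparable to $\e$, in such a way that the $V_i$'s meeting $K$ come in runs of at least three by design.
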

Now following Naud we can construct the Dolgopyat operators. Suppose that we are working at frequency $s = a + ib$. Then for fixed $\e'$ to be chosen, we construct a triadic partition $(V_i)^Q_{i =1}$ of $U_0$ with $\e = \e' / |b|$ as in Proposition \ref{triad}. Then for all $i \in \{ 1, 2 \}$ and $j \in \{1 ,\ldots,Q\}$ we set
\begin{equation}
Z^i_j = \beta_i^{\tilde{N}} ( V_j \cap U_0 ).
\end{equation} 
We will write
\begin{equation}
X_j = \{ x \in I : \psi(x) \in V_j \} , \quad 1 \leq j \leq Q .
\end{equation}
Properties  \ref{fourth} and \ref{second} of Proposition \ref{triad} imply that 
\begin{equation}\label{eq:cutoffest}
\mathrm{dist}( K \cap V_j , \partial V_j ) \geq A_2|V_j| \geq  \frac{A_2 A'_1 \e' } { |b| } .
\end{equation}
whenever $K \cap V_j \neq 0$. For such $j$ we can find a $C^1$ cutoff $\chi_j$ on $I$ that is $\equiv 1$ on the convex hull of $K \cap V_j$ and $\equiv 0$ outside $V_j$. Due to \eqref{eq:cutoffest} we can ensure that
\begin{equation}
| \chi'_j | \leq A_3 \frac{|b|}{\e'} ,\quad A_3 = A_3( A_2 , A'_1 ). 
\end{equation}
Then the index set $\I_s$ is defined to be
\begin{equation}
\I_s \equiv \{ ( i ,j ) \: : \: 1 \leq i \leq 2 , 1 \leq j \leq Q , V_j \cap K \neq \emptyset \}. 
\end{equation}
Allow $0 < \theta < 1$ to be fixed shortly. For all $J \subset \I_s$ we define $\chi_J \in C^1(I)$ by
\begin{equation}
\chi_J(x)  = \begin{cases}
    1 - \theta \chi_j( \psi(T^N x) ), & \text{if $x \in Z^j_i$ for $(i,j) \in J $}.\\
    1, & \text{else}.
  \end{cases}
\end{equation}
Then the Dolgopyat operators on $C^1(I)$ are defined by
\begin{equation}
\N^J_s (f) = L^N_a ( \chi_J f ) .
\end{equation}
Recall that $L_a$ is the transfer operator at $s = a$. 

Let us return to our Lemma \ref{core} so that we can complete our definitions. 

\begin{dfn}\label{dense}
We say that $J \subset I_s$ is dense if for all $1 \leq j \leq Q$ with $V_j \cap K \neq \emptyset$ 
there is some $1 \leq j' \leq Q$ with $(i , j') \in J$ for some $i \in \{ 1, 2 \}$ and with $|j - j'| \leq 2.$ 
\end{dfn}
We define $\E_s$ of Lemma \ref{core} to be the set of $J \subset \I_s$ such that $J$ is dense.

The following is proved in \cite{NAUD} -  we have tried to contain everything that we use as a black box here.
\begin{prop}[Naud] \label{blackbox}
When $N$ is large enough and $\theta$, $\e'$  are small enough, there are $A$ and $\rho$ as in Lemma \ref{core}  such that properties \ref{corefirst} and \ref{coresecond} hold for our $(N , |b| , \theta , \e')$ parameterized and $\E_s$-indexed Dolgopyat operators whenever $|b|$ is large enough and $|a - s_0|$ is small enough. For the same range of  parameters
we can also ensure
\begin{equation}
 |([ \tau^N_a + i b \tau^N ] \circ \a^N )'(x)| \leq  \frac{ A |b| }{4}\end{equation}
for any inverse branch $\alpha^N$ of $T^N$. 
\end{prop}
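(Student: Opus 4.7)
The operators $\N^J_s = L^N_a(\chi_J\,\cdot\,)$ are scalar transfer operators: they carry no dependence on the congruence twist $c_q$, so uniformity in $q$ is automatic and the proposition reduces to the Dolgopyat--Naud analysis of \cite{NAUD}. The plan is to verify that Naud's argument yields each of the three claims. For the cone stability, I would decompose $(\N^J_s H)'$ into contributions from $(\tau^N_a)'$, $\chi_J'$, and $H'$, each coupled via the chain rule to the Jacobian $(\alpha^N)'$. A telescoping geometric series based on the expansion estimate $|(T^j\circ\alpha^N)'(x)| \leq D\gamma^{-(N-j)}$ shows that $|(\tau^N_a \circ \alpha^N)'|$ is bounded by a constant depending only on $\tau$ and $T$; direct inspection gives $|(\chi_J \circ \alpha^N)'| \lesssim \theta A_3 |b|/\e'$ (with a bounded prefactor from $\psi$); and the $H'$-contribution picks up a factor $D\gamma^{-N} A|b|$ from the cone condition $|H'| \leq A|b| H$. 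Choosing $A$ large enough (depending on $A_3/\e'$, $b_0$, and the constants attached to $\tau$) and $N$ large enough that $D\gamma^{-N} \leq 1/2$ then closes the bound $|(\N^J_s H)'| \leq A|b|\cdot \N^J_s H$; positivity is immediate once $\theta < 1$.

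For the $L^2$ contraction, pointwise Cauchy--Schwarz together with $L_a 1 = 1$ gives $|\N^J_s H|^2 \leq L^N_a(\chi_J^2 H^2)$, and integrating against $\nu_0$ (essentially invariant under $L_a$ for $|a - s_0|$ small) reduces the task to bounding $\int \chi_J^2 H^2 \, d\nu_0$ by $\rho \int H^2 \, d\nu_0$. The cone hypothesis forces $H$ to be nearly constant on each preimage set $Z^j_i$ (of diameter $\lesssim \gamma^{-N}$), while the dense condition on $J$ ensures that a $\nu_0$-positive fraction of every local triad of $V_j$'s is cut by $\chi_J \leq 1-\theta$; the Gibbs property of $\nu_0$ then yields a contraction constant $\rho < 1$ uniform in $|b|$. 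This is the most delicate step, and is the main obstacle in Naud's original argument — coordinating the triadic partition with the Gibbs measure so that the dense condition supplies a definite $L^2$ saving requires real care — but the entire analysis is scalar and imports from \cite{NAUD} without modification.

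For the final derivative estimate on $\tau^N_a + ib\tau^N$ composed with an inverse branch, the same telescoping geometric series produces $|(\tau^N \circ \alpha^N)'| \leq C$ and $|(\tau^N_a \circ \alpha^N)'| \leq C'$ uniformly in $N$, with constants depending only on $\tau$ and $T$. Hence $|([\tau^N_a + ib\tau^N] \circ \alpha^N)'| \leq C' + C|b| \leq 2C|b|$ for $|b| \geq b_0$, and enlarging $A$ if necessary so that $A/4 \geq 2C$ secures the claimed bound.
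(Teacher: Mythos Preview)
Your sketch is essentially correct and follows the Naud approach, but note that the paper does not give its own proof of this proposition: it is explicitly imported as a black box from \cite{NAUD}, with only a pointer to \cite[pg.~137]{NAUD} for the derivative inequality. So there is nothing to compare against beyond the citation itself; your outline is a faithful summary of what Naud does and correctly identifies that the operators $\N^J_s$ carry no $q$-dependence, making the result a direct quotation from the scalar case.

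One small point worth tightening in your sketch of property~\ref{coresecond}: the step $\int L^N_a(\chi_J^2 H^2)\,d\nu_0 \leq \rho \int H^2\,d\nu_0$ does not follow from $L_a$-invariance of $\nu_0$ alone (which only gives equality, not contraction), but rather from the fact that on a definite $\nu_0$-fraction of each triad one has $\chi_J^2 \leq (1-\theta)^2$, together with the log-bounded variation of $H$ on each $Z^i_j$ coming from the cone condition and the small diameter. You allude to this with the Gibbs property and the dense condition, which is the right idea, but the argument is a genuine estimate rather than a consequence of invariance.
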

The proof of the inequality above is on \cite[pg. 137]{NAUD}.

\textit{This fully completes the definition of the Dolgopyat operators modulo choice of $\e'$, $ \theta $ and $N$ - the $A$ and $\rho$ required for Lemma \ref{core} are that specified by Lemma \ref{blackbox} given these parameters.}

\subsection{Proof of Lemma \ref{core}, property \ref{corethird}}
Our remaining task in this section is to prove property \ref{corethird} of Lemma \ref{core}. This is proved for complex valued functions by Naud  in \cite[pp. 140-144]{NAUD}. Naud makes some use of taking quotients of  values of functions that we will have to work around.

We give the details now. Recall that $\e', \theta$ are still undetermined. The following technical lemma is the vector valued version of \cite[Lemma 5.10]{NAUD}. Recall that $c_q : I \to U( \C^{\G_q  } )$ is our twisting unitary valued map at level $q$. The function defined on $T^{-N}(I)$ by
\begin{equation}
c_q^N( x ) \equiv c_q( T^{N-1} x ) c_q ( T^{N-2} x ) \ldots c_q(Tx ) c_q( x )  \label{defineourselvesacocyle}
\end{equation}
arises in our analysis. In particular we will need to consider $c_q^N( \a_i^N x)$, where $\a^N_i$, $i=1,2$ are the two  particular  branches of $T^{-N}$ that are given by Lemma \ref{NLIcons}. We record the key fact here that since $c_q$ is locally constant, so too is $c^N_q$ for any $N$. 

\begin{lemma}[Key technical fact towards non-stationary phase]\label{technical}
Let $H \in \CC_{A|b|}$, $f \in C^1(I; \C^{\G_q})$ such that $|f| \leq H$ and $|f'| \leq A|b|H$. For $i = 1 , 2,$ define for $\theta$ a small real parameter and for any $q$,
\begin{equation}
\Theta_1(x) \equiv \frac{ |e^{ [ \tau_a^N + ib \tau^N](\a_1^N x)}c_q^N( \a_1^N x)f(\a_1^N x) +e^{ [ \tau_a^N + ib \tau^N](\a_2^N x)}c_q^N( \a_2^N x)f(\a_2^N x) |}{ (1-2\theta) e^{ \tau_a^N(\a_1^N x) }H(\a_1^N x) +e^{ \tau_a^N(\a_2^N x) }H(\a_2^N x) };
\end{equation}
\begin{equation}
\Theta_2(x) \equiv \frac{ |e^{ [ \tau_a^N + ib \tau^N](\a_1^N x)}c_q^N( \a_1^N x)f(\a_1^N x) +e^{ [ \tau_a^N + ib \tau^N](\a_2^N x)}c_q^N( \a_2^N x)f(\a_2^N x) |}{  e^{ \tau_a^N(\a_1^N x) }H(\a_1^N x) +(1-2\theta)e^{ \tau_a^N(\a_2^N x) }H(\a_2^N x) }.
\end{equation}
Then for $N$ large enough, one can choose $\theta$ and $\e'$ small enough such that for $j$ with $X_j \cap K \neq \emptyset$, 
there are $j'$ with $|j - j'| \leq 2$, $X_{j'} \cap K \neq \emptyset$ and $i \in \{ 1 , 2\}$ such that
\begin{equation}
\Theta_i(x) \leq 1 \quad\text{for all $x \in X_{j'}$.}
\end{equation}

\end{lemma}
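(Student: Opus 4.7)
The plan is to reduce the vector-valued numerator appearing in $\Theta_i$ to a scalar phase integral of exactly the form treated by Naud in \cite[pp.~140--144]{NAUD}, then import his non-stationary phase argument across the triadic partition of Proposition~\ref{triad} essentially verbatim. Two observations make the reduction work: $c_q^N\circ\alpha_i^N$ is a \emph{single constant} unitary on all of $I$, and the complex scalar $F(x):=\langle h_1(x),U_1^{-1}U_2 h_2(x)\rangle_{\C^{\G_q}}$ that arises in the cross term obeys the same Cauchy--Schwarz and $C^1$ bounds that Naud uses on the scalar product $\bar h_1 h_2$, with constants independent of $q$.

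\textbf{Scalar reduction.} Since $c_q$ is locally constant on $\{I_j\}$ and the branch $\alpha_i^N$ specifies a fixed symbolic word $(i_0,\ldots,i_{N-1})$, the cocycle product $c_q^N(\alpha_i^N x)=c_q|_{I_{i_{N-1}}}\cdots c_q|_{I_{i_0}}=:U_i$ does not depend on $x$. Writing $h_i(x):=e^{\tau_a^N(\alpha_i^N x)}f(\alpha_i^N x)$, $\phi_i(x):=b\,\tau^N(\alpha_i^N x)$ and $\psi:=\phi_2-\phi_1$, unitarity of $U_1,U_2$ on $\C^{\G_q}$ gives
\[
\big|U_1 h_1 e^{i\phi_1}+U_2 h_2 e^{i\phi_2}\big|^2 \;=\; |h_1|^2+|h_2|^2+2\,\mathrm{Re}\bigl(e^{i\psi}F\bigr).
\]
Cauchy--Schwarz yields $|F|\le|h_1||h_2|$, and differentiating the inner product together with the cone condition $|f'|\le A|b|H$ and the standard control on $(\tau_a^N\circ\alpha_i^N)'$, $(\tau^N\circ\alpha_i^N)'$ used throughout this section give $|F'|\lesssim |b|\,|h_1||h_2|$. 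These are the exact analytic inputs Naud applies to his scalar cross term.

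\textbf{Non-stationary phase on the triad.} The right-hand side above is now formally identical to Naud's scalar quantity, with $F$ in the role of $\bar h_1 h_2$. By Lemma~\ref{NLIcons} the phase $\psi$ satisfies $|\psi'|\asymp|b|$ on $\U$, whereas $F$ and $\arg F$ each vary by only $O(\varepsilon')$ across every atom $V_j$ of diameter $\asymp\varepsilon'/|b|$. For a triad $V_{j-1},V_j,V_{j+1}$ meeting $K$, $\psi$ therefore sweeps an arc of length $\gtrsim\varepsilon'$, so choosing $\varepsilon'$ and $\theta$ small and $N$ large produces some $j'$ with $|j-j'|\le 2$, $V_{j'}\cap K\ne\emptyset$, on which $\cos(\psi(x)+\arg F(x))\le 1-c(\theta)$. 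Combined with $|F|\le|h_1||h_2|$ and a case split according to whether $|h_1|$ or $|h_2|$ dominates on $X_{j'}$ (using the cone condition on $H$ to quantify dominance up to the desired $\theta$-gain), this reproduces Naud's conclusion and selects the correct $i\in\{1,2\}$ so that $\Theta_i(x)\le 1$ on $X_{j'}$.

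\textbf{Main obstacle.} The only genuinely new difficulty is controlling the argument of the cross term $F$, which here is an inner product of two $\C^{\G_q}$-valued functions rather than a product of scalars. The rescue is precisely the Step-1 observation that the $U_i$ are \emph{constant} on all of $I$, not merely locally constant along $\{I_j\}$: the only source of variation in $F$ is then the $C^1$ regularity of the $h_i$, which is governed by the same cone data as in the scalar case and with dimension-free constants. This dimension-freedom of the bounds on $F$ and $F'$ is also exactly what yields uniformity in $q$. Points where $F$ happens to vanish pose no problem, since there the triangle inequality already supplies the required estimate without invoking oscillation.
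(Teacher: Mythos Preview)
Your overall strategy matches the paper's: reduce to a scalar cross term via the constancy of the cocycle, then run Naud's triad argument on the phase. But there is a genuine gap in the analytic reduction.

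You claim that the cone condition and the standard branch estimates give $|F'|\lesssim |b|\,|h_1||h_2|$. This is not true as stated. Differentiating gives $|F'|\le |h_1'||h_2|+|h_1||h_2'|$, and what the cone data actually yield is $|h_i'|\lesssim |b|\,e^{\tau_a^N(\alpha_i^N x)}H(\alpha_i^N x)$, not $|h_i'|\lesssim |b|\,|h_i|$. The two are only comparable once you know $|f(\alpha_i^N x)|\gtrsim H(\alpha_i^N x)$ on the relevant sets, and this is precisely what the paper obtains from Lemma~\ref{alt}: one first disposes of any $Z^i_{j'}$ on which $|f|\le\tfrac34 H$ (there $\Theta_i\le 1$ is immediate from $1-2\theta\ge \tfrac34$), and only afterwards works under the standing assumption $|f|\ge\tfrac14 H$ on every $Z^i_{j'}$. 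You omit this reduction entirely, and without it your derivative bound on $F$ fails.

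The same omission undermines your control of $\arg F$. Saying ``points where $F$ vanishes pose no problem'' is not enough: to make the phase-sweep argument work you need $|F|$ (equivalently $|\langle u_1,u_2\rangle|$ in the paper's normalization) uniformly bounded below on $\hat X_j$, not merely nonzero pointwise. The paper handles this with a second dichotomy: either $|\langle u_1,u_2\rangle|<\tfrac1{10}$ somewhere on $\hat X_j$, in which case the variation bound forces $|\Re\Phi|<\tfrac15$ everywhere and Lemma~\ref{trig} finishes immediately, or $|\langle u_1,u_2\rangle|\ge\tfrac1{10}$ throughout, in which case $\arg F$ is well defined with $|(\arg F)'|\lesssim |b|D\gamma^{-N}$, and only then does the NLI lower bound dominate. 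Both dichotomies---Lemma~\ref{alt} for $|f|$ versus $H$, and the small/large split for $|\langle u_1,u_2\rangle|$---are essential and missing from your sketch.
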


Before giving the proof we must state a simple Lemma from \cite{NAUD}. The proof goes through easily in our vector valued setting. This is also covered in \cite[Lemma 3.29]{OW}.

\begin{lemma}[Lemma 5.11 of \cite{NAUD}]\label{alt}
 Let $Z \subset I$ be an interval with $| Z| \leq c / |b|$. Let $H \in \CC_{A|b|}$ and $f \in C^1( I; \C^{\G_q} )$ with $|f| \leq H$ and $|f'| \leq A|b|H$. Then for $c$ small enough, we have either
\begin{equation}
| f(u) | \leq \frac{3}{4}H(u)
\quad\text{for all $u \in Z$, or}
\end{equation}
\begin{equation}
|f(u) | \geq \frac{1}{4} H(u) \quad\text{for all $u \in Z$.}
\end{equation}
\end{lemma}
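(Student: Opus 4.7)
The plan is to argue by contradiction. Suppose both alternatives fail, so that there exist points $u_1, u_2 \in Z$ with $|f(u_1)| > \tfrac{3}{4} H(u_1)$ and $|f(u_2)| < \tfrac{1}{4} H(u_2)$; I will show that for $c$ small enough (depending only on $A$) this is impossible. The vector-valued aspect is handled entirely by the reverse triangle inequality $\bigl| |f(u_1)| - |f(u_2)| \bigr| \leq |f(u_1) - f(u_2)|$, so the argument reduces cleanly to the scalar setting of \cite[Lemma 5.11]{NAUD}.

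First I would use the cone condition $|H'| \leq A|b| H$ together with $|Z| \leq c/|b|$ to control the oscillation of $H$ across $Z$: integrating $(\log H)'$ yields $H(u)/H(v) \leq e^{A|b||u-v|} \leq e^{Ac}$ for any $u, v \in Z$. Writing $m = \min_Z H$ and $M = \max_Z H$, this gives $M \leq e^{Ac}\,m$, which is as close to $1$ as desired for small $c$. Second, the pointwise bound $|f'| \leq A|b| H$ combined with the fundamental theorem of calculus applied to $f$ (treated as a vector-valued function and then projected to its norm) gives
\[
\bigl| |f(u_1)| - |f(u_2)| \bigr| \;\leq\; \left|\int_{u_2}^{u_1} |f'(t)|\,dt\right| \;\leq\; A|b|\,M\,|Z| \;\leq\; A c\, e^{Ac}\, m .
\]
On the other hand, the hypothetical failure of both alternatives forces
\[
|f(u_1)| - |f(u_2)| \;>\; \tfrac{3}{4} H(u_1) - \tfrac{1}{4} H(u_2) \;\geq\; \tfrac{3 - e^{Ac}}{4}\, m ,
\]
so any $c$ small enough that $3 - e^{Ac} > 4 A c\, e^{Ac}$ (which is possible since the left side tends to $2$ and the right to $0$ as $c \to 0^+$) produces the required contradiction.

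I do not anticipate any substantive obstacle. The main subtlety in going from the scalar to the vector-valued statement is ensuring that all estimates can be phrased in terms of the Euclidean norm $|f|$, and this is immediate from the reverse triangle inequality at the single step where it matters. Consequently the proof would be essentially word-for-word Naud's proof of \cite[Lemma 5.11]{NAUD}, with $|f|$ substituted for $f$ at the key estimate and the threshold for $c$ depending only on the cone parameter $A$ and not on $b$ or $q$ — which is what is needed for the uniform estimates in the subsequent sections.
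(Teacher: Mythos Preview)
Your proposal is correct and matches the paper's approach exactly: the paper does not write out a proof but simply remarks that Naud's scalar argument ``goes through easily in our vector valued setting,'' and your write-up is precisely that argument, with the reverse triangle inequality doing the one extra step needed to pass from $|f(u_1)-f(u_2)|$ to $\bigl||f(u_1)|-|f(u_2)|\bigr|$.
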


We also need the following piece of trigonometry from \cite[Lemma 5.12]{NAUD}.
\begin{lemma}[Sharp triangle inequality]\label{trig}
Let $V$ be a finite dimensional complex vector space with Hermitian inner product $\langle \bullet , \bullet \rangle$. For non zero vectors $z_1 , z_2$ with $|z_1 | / |z_2| \leq L$ and
\begin{equation}\label{eq:subconvex}
\Re \langle z_1 , z_2 \rangle \leq (1 - \eta) |z_1| |z_2| ,
\end{equation}
there is $\delta = \delta( L ,\eta)$ such that
\begin{equation}
|z_1 + z_2| \leq (1-\delta)|z_1| + |z_2|.
\end{equation}
\end{lemma}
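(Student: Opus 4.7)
The strategy is to reduce the statement to a clean one-variable inequality by normalizing, then solve for $\delta$ explicitly.

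First I would write $t = |z_1|/|z_2| \in (0, L]$ and, using that the inner product is Hermitian, expand
\begin{equation}
|z_1 + z_2|^2 = |z_1|^2 + 2\,\Re \langle z_1, z_2 \rangle + |z_2|^2.
\end{equation}
Plugging in the hypothesis $\Re \langle z_1, z_2 \rangle \leq (1 - \eta)|z_1||z_2|$ yields
\begin{equation}
|z_1 + z_2|^2 \leq |z_1|^2 + 2(1-\eta)|z_1||z_2| + |z_2|^2 = (|z_1| + |z_2|)^2 - 2\eta|z_1||z_2|.
\end{equation}
Dividing through by $|z_2|^2$ reduces the problem to showing that, for every $t \in (0, L]$, one has
\begin{equation}
\sqrt{(t+1)^2 - 2\eta t} \leq (1-\delta) t + 1
\end{equation}
for some $\delta = \delta(L, \eta) > 0$.

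Next I would square both sides (both are positive) and collect terms. The inequality becomes $\delta(2-\delta)\, t \leq 2(\eta - \delta)$ after cancelling the constant and $t^2$-free parts; equivalently,
\begin{equation}
\delta\bigl[ (2-\delta)t + 2 \bigr] \leq 2\eta.
\end{equation}
Since $t \leq L$ and $\delta$ will be chosen in $(0,1)$, the bracket is at most $2(L+1)$, so it suffices to take, say, $\delta = \eta / (L+1)$. Taking $\delta$ smaller if necessary to ensure $\delta \in (0,1)$, this $\delta$ depends only on $L$ and $\eta$, as required.

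The argument is essentially algebraic and I do not foresee any real obstacle; the mild subtlety is that the Hermitian (rather than real inner product) structure enters only through the identity $|z_1+z_2|^2 = |z_1|^2 + 2\Re\langle z_1,z_2\rangle + |z_2|^2$, so the proof is identical to the scalar $\C$-case once this observation is made. This is why the argument is the same as in Naud's complex-scalar Lemma 5.12, and why one may freely apply it to the vector-valued setting used in Lemma \ref{technical}.
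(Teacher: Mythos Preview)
Your proof is correct. The paper does not give its own argument for this lemma but simply cites it as \cite[Lemma 5.12]{NAUD}; your expansion of $|z_1+z_2|^2$, reduction to the one-variable inequality $\delta[(2-\delta)t+2]\leq 2\eta$, and the explicit choice $\delta=\min(\eta/(L+1),1)$ constitute exactly the standard proof, and your remark that the Hermitian structure enters only through the identity $|z_1+z_2|^2=|z_1|^2+2\Re\langle z_1,z_2\rangle+|z_2|^2$ correctly explains why Naud's scalar argument carries over verbatim.
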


\begin{proof}[Proof of Lemma \ref{technical}]
Choose $\e'$ small enough so that Lemma \ref{alt} holds for all $Z = Z^i_j$ (with $c = \e'$). As in \cite{NAUD} by choosing $N$ large enough it is possible to assume $|Z^i_j| \leq |V_j|$ for all $j , i$. We also enforce $\theta < 1/8$ so that $1-2\theta \geq 3/4$.

Now let $V_j , V_{j+1}, V_{j+2}$ all have non empty intersection with $K$. One of the $j , j+1 , j+2$ will be the $j'$ of the Lemma. Set $\Xhat_j = X_j \cup X_{j+1} \cup X_{j+2}$ and assume as in Naud that $\Xhat_j$ is contained in one connected component of $I$; note that $\Xhat_j$ is connected.

Following from  our choice of $\theta$, if there is $j' \in \{ j , j+1 , j+2\}$ and $i \in \{1 ,2\}$ with $|f(u)| \leq \frac{3}{4}H(u)$ when $u \in Z^i_{j'}$ then $\Theta_i(u) \leq 1$ on $Z^i_{j'}$ and we are done. So we can assume $ |f(u)| > \frac{3}{4}H(u)$
for some $u$ in each $Z^i_{j'}$. Hence by Lemma \ref{alt}, for all $i , j'$ we have
\begin{equation}\label{eq:assumption}
| f(u) | \geq \frac{1}{4}H(u) > 0 , \quad  \forall u \in Z^i_{j'}.
\end{equation}

 We make the definition
\begin{equation}
z_i(x) \equiv \exp\left( [ \tau_a^N + ib \tau^N](\a_i^N x) \right)c_q^N( \a_i^N x) f(\a_i^N x),  \quad z_i : \Xhat_j \to \C^{\G_q} , \quad  i = 1,2.
\end{equation}
The result follows from Lemma \ref{trig} after establishing bounds on the relative size and angle of $z_1, z_2$ uniformly in appropriate $X_{j'}$.

\textbf{Control of relative size.} Firstly we wish to control the relative size of $z_1, z_2$. This is done by Naud and his estimates go through directly in our case, after making all substitutions of the form
\begin{equation}
\left|  \frac{z_1(x) } {z_2(x) } \right| \to \frac{ |z_1(x) |} {|z_2(x) |}
\end{equation}
and bearing in mind that $c_q^N$ is a unitary valued function.
This caters to our inability to divide non zero vectors. The output of Naud's argument in \cite[pp. 141-142]{NAUD} is that given $j' \in \{j , j+1 , j+2\}$, either $|z_1 (x) | \leq  M |z_2(x) |$ for all $x \in X_{j'}$ or  $|z_2 (x) | \leq  M |z_1(x) |$ for all $x \in X_{j'}$, where 
\begin{equation}
M = 4 \exp( 2 N B_a ) \exp(2 A \e' A_1)
\end{equation}
and 
\begin{equation}
B_a = a \|\tau\|_\infty + |P(-a\tau)| + 2 \| \log h_a \|_\infty
\end{equation}
is a locally bounded function that arises in the estimation of $\tau_a^N$ (cf. \cite[pg. 139]{NAUD}). Returning to the overall argument, this means that we are done when we can establish \eqref{eq:subconvex} with some $\eta$ uniformly on some $X_{j'}$.

\textbf{Control of relative angle.} The key argument here is to very carefully control the angles between the functions $z_1$ and $ z_2$. One sets
\begin{equation}
\Phi(x) \equiv \frac{ \langle  z_1(x) , z_2(x) \rangle }{|z_1(x)||z_2 (x)|} 
\end{equation}
which is the same as
\begin{equation}
\Phi(x) = \exp( i b (\tau^N(\a_1^N x)  - \tau^N(\a_2^N x))) \frac{ \langle c^N_q( \a_1^N x) f(\a_1^N x)  , c_q^N(\a_2^N x )f(\a_2^N x)\rangle  }{|f(\a_1^N x) || f(\a_2^N x)| }.
\end{equation}
Define
\begin{equation}
u_i(x) = c_q^N( \a_i^N x ) \frac{ f(\a_i^N x) }{ | f( \a_i^N x ) | } , \quad x \in \Xhat_j , \quad i = 1,2.
\end{equation}
Then the $u_i$ are $C^1$ as $f$ is non vanishing through \eqref{eq:assumption}. We have
\begin{equation}
(c_q^N  . f  ) \circ \a_i^N =   |f \circ \a_i^N |  . u_i ,
\end{equation}
so that, differentiating on both sides and using $(c_q^N)' \equiv 0$,
\begin{equation}
(c_q^N \circ \a_i^N) .(f \circ \a_i^N )' = | f \circ \a_i^N |' u_i +  |f \circ \a_i^N  | u_i' .
\end{equation}
As $u_i$ has constant length 1 it follows that $u_i$ and $u'_i$ are orthogonal (in $\R^{2|\G_p | }$). Therefore
\begin{equation}
|  [ f \circ \a_i^N ]' |^2 =  ( | f \circ \a_i^N |' )^2 +  | f \circ \a_i^N |^2 | u'_i |^2 .
\end{equation}
It now follows that
\begin{equation}
| u'_i(x) | \leq   \frac{ | [ f \circ \a_i^N ]'(x) | }{ |f ( \a_i^N x ) | } .
\end{equation}
We estimate the right hand side by a direct calculation using the chain rule with the expanding property of $T$ and our assumptions on $H$ from \eqref{eq:assumption} and the hypotheses of Lemma \ref{technical}. Indeed, Naud performs a similar calculation \cite[pg. 142]{NAUD} which yields
\begin{equation}\label{eq:uest}
|u'_i(x) | \leq 8 A |b| \frac{D}{\g^N}.
\end{equation}
Note that we can rewrite the central quantity $\Phi$ as
\begin{equation}\label{eq:phisimple}
\Phi(x)=   \exp( i b (\tau^N(\a_1^N x)  - \tau^N(\a_2^N x)) \langle   u_1(x) ,  u_2(x) \rangle .
\end{equation}
We can use \eqref{eq:uest}  and Cauchy-Schwarz to get
\begin{equation}\label{eq:bracketest}
\left| \frac{d}{dx} \langle u_1 , u_2 \rangle \right|  =\left|  \langle u'_1 , u_2 \rangle +  \langle u_1 , u'_2 \rangle \right| \leq  16 A |b| \frac{D}{\g^N}.
\end{equation}
Note that we have the diameter bound
\begin{equation}\label{eq:diameter}
\diam (\Xhat_{j} ) \leq 3A_1 \frac{ \e' }{|b|} \| (\psi^{-1} )'\|_\infty
\end{equation}
so that using  \eqref{eq:bracketest} 
we have
\begin{equation}
| \langle u_1(x_1) , u_2(x_1) \rangle - \langle u_1(x_2) , u_2(x_2 ) \rangle | \leq 3\cdot 16 \cdot AA_1 \| (\psi^{-1} )'\|_\infty \e' \frac{D}{\g^N}
\end{equation}
for any $x_1, x_2  \in \Xhat_j$; note here that the cocycles $c_q^N(\alpha_i^Nx)$ are constant on $\hat X_j$.
\textbf{We now enforce} $\e' < 1/10$ and $N$ large enough so that
\begin{equation}
48 \cdot AA_1 \| (\psi^{-1} )'\|_\infty \frac{D}{\g^N} < 1.
\end{equation}

Let us cut off one branch of reasoning. Suppose that there is $x_0 \in \Xhat_j$  with
\begin{equation}
   |  \langle u_1(x_0 ) , u_2(x_0 ) \rangle | < 1/10. 
\end{equation}
Then for all $x \in \Xhat_j$ we have
\begin{equation}
 |  \langle u_1(x ) , u_2(x ) \rangle |  < 1/5.
\end{equation}
It would follow that  $ |\Re \Phi(x) | < 1/5 $ for all $x \in \Xhat_j$ and the  Lemma would be proved by our argument with trigonometry. 

Therefore we can now assume 
\begin{equation}
|  \langle u_1(x ) , u_2(x ) \rangle |  \geq 1/10
\end{equation}
for all $x \in \Xhat_j$. Then the new function
\begin{equation}
U(x) = \frac{ \langle u_1(x) , u_2(x) \rangle }{ | \langle u_1(x) , u_2(x) \rangle  | } \in \C
\end{equation}
is $C^1$ on $\Xhat_j$ of constant length $1$ and by an argument we have made before
\begin{equation}\label{eq:derivU}
| U' (x) | \leq \frac{  | \langle u_1 , u_2 \rangle'(x) | }{  | \langle u_1(x) , u_2(x) \rangle  | } \leq 10\cdot 16\cdot  A |b| \frac{D}{\g^N},
\end{equation}
using \eqref{eq:bracketest}. We can write 
\begin{equation}
U(x) = \exp( i \phi(x) ) 
\end{equation}
for some $C^1$ real valued $\phi : \Xhat_j \to \R$.  Then \eqref{eq:derivU} reads
\begin{equation}\label{eq:phideriv}
 |\phi'(x) |  \leq   160 A |b| \frac{D}{\g^N} .
\end{equation}

As we assume $\Phi \neq 0$  on $\Xhat_j$, we can find a $C^1$ function that we will denote
\begin{equation}
\arg \Phi : \Xhat_j \to S^1 = \R /2\pi\Z, \quad \Phi(x) = \exp( i \arg \Phi(x) ) \cdot  |\Phi(x) |.
\end{equation}

Now define
\begin{equation}
F(x)  = (\tau^N(\a_1^N x)  - \tau^N(\a_2^N x))  ,\quad x \in \Xhat_j.
\end{equation}
The critical output of the (NLI) property for $\tau$, Lemma \ref{NLIcons}, tells us  that 
\begin{equation}\label{eq:Fderiv}
0 < m \leq | F'(x) | \leq m' 
\end{equation}
when we choose $N > N_0$, \textbf{which we do}.
As 
\begin{equation}
\arg \Phi =  b F + \phi
\end{equation}
we now have, incorporating \eqref{eq:Fderiv} and \eqref{eq:phideriv}
\begin{equation}
   |b| ( m -        10\cdot16 A  \frac{D}{\g^N} )               \leq |  ( \arg \Phi )' | \leq  |b| ( m' + 10\cdot 16 A \frac{D}{\g^N} ) .
\end{equation}
We \textbf{fix, finally,} $N$ large enough so that we gain $C_2 > C_1 > 0$ (depending only on $N$, $m$, $m'$, $A$, $D$, and $\g$) with 

\begin{equation} 
|b| C_1 \leq | (\arg \Phi )' | \leq |b| C_2 . 
\end{equation}
Now  by estimating diameters of $X_{j+1}$ and $\Xhat_j$ from Proposition \ref{triad} together with the mean value theorem, the total cumulative  change of argument of $\Phi$ between $x_j\in X_j$ and $x_{j+2} \in X_{j+ 2}$, written $\Delta$, is between

\begin{equation}
C_3 \e' \leq \Delta \leq   C_4 \e'
\end{equation}
where
\begin{equation}
C_3 = C_1 A'_1 \inf_{U_0 }| (\psi^{-1})'| > 0 , \quad C_4 =  C_2  3A_1  \| (\psi^{-1} )'\|_\infty .
\end{equation}
\textbf{We now enforce} $\e' < \pi / (2C_4)$ so that we no longer need to worry about $\arg \Phi$ winding around the circle.
 We are about to conclude. Now $\e'$ \textbf{is fixed}. By our trigonometric strategy, we are done with 
\begin{equation}
\theta = \delta\left( M , \left(\frac{C_3 \e'}{100} \right)^2\right)
\end{equation}
 unless there exist $x_j \in X_j$ and $x_{j+2} \in X_{j+2}$ with
\begin{equation}
\Re \Phi( x_k) >  1 -   \left( \frac{C_3 \e'}{100}\right)^2, \quad k = j , j+2 .
\end{equation}
In this case, by the Schwarz inequality we know
\begin{equation}
| \Phi(x_k) | \leq 1 \quad k = j , j +2 
\end{equation}
so it follows that now using the principal branch for $\arg$ and e.g. $| \sin x | \leq 2| x| $
\begin{equation}
 |\arg \Phi (x_k ) | \leq C_3 \e' /50 ,\quad k = j , j+2.
\end{equation}
Given that the argument of $\Phi$ moves at least by $C_3 \e'$ in one direction between $x_j$ and $x_{j+2}$ and does not move more than $\pi/2$ (hence does not wind), this is a contradiction.

\end{proof}

We can now conclude this section with
\begin{proof}[Proof of Lemma \ref{core}, property \ref{corethird}]

Choose  $N$, $\theta$ and $\e'$ so that Proposition \ref{blackbox} holds as well as Lemma \ref{technical}. Increasing $N$ if necessary we may also assume that $\frac{D}{\gamma^N} \leq \frac{1}{4}$. 

Suppose we are given $H \in \CC_{A|b| }$ and $f \in C^1( I ; \C^{\G_q} )$ such that $|f| \leq H$ and $|f'| \leq A|b|H$. The second inequality stated in property \ref{corethird} is softer so we prove this first. The complex scalar version of this inequality is proved in \cite[pg. 138]{NAUD}. 

We calculate
\begin{equation}
[L^N_{s,q}f]  (x)= \sum_{\a^N} \exp( [ \tau^N_a + i b \tau^N ](\a^N x)  ) c_q^N(\a^N x)  f(\a^N x).
\end{equation}
where
\begin{equation}
c_q^N(y) = c_q(T^{N-1} y) \ldots c_q(T y) . c_q(y) 
\end{equation}
and the sum is over branches of $T^{-N}$.
Therefore
\begin{align*}
[ L^N_{s,q}f]'(x)   &=    \sum_{\a^N}([ \tau^N_a + i b \tau^N ] \circ \a^N )'(x) \exp( [ \tau^N_a + i b \tau^N ](\a^N x)  ) c_q^N(\a^N x)  f(\a^N x) \\
&+  \sum_{\a^N}  \exp( [ \tau^N_a + i b \tau^N ](\a^N x)  ) c_q^N(\a^N x) (f \circ \a^N )' ( x) ,
\end{align*}
$c_q^N$ being locally constant. Using that $c_q^N$ is unitary and bounding derivatives of $\a^N$ with the eventually expanding property and chain rule gives
\begin{align*}
| [L^N_{s,q}f]'(x) | \leq  \sum_{\a^N} |([ \tau^N_a + i b \tau^N ] \circ \a^N )'(x)| \exp( [ \tau^N_a ](\a^N x)  )   H(\a^N x)  \\
+ \frac{D}{\g^N}  \sum_{\a^N}   \exp( [ \tau^N_a  ](\a^N x)  )  A|b| H(\a^N x) .
\end{align*}
Using the inequality in Proposition \ref{blackbox} and our choice of $N$ we get 
\begin{equation}
| [L^N_{s,q}f]'(x) | \leq \frac{1}{2} A |b| [L_a^N H](x) \leq A |b|  [\N^J_s H] (x)
\end{equation}
given the very mild assumption $\theta < 1/2$.

 Now we turn to the more difficult first inequality of Lemma \ref{core}, property \ref{corethird}. Given that we have established Lemma \ref{technical} in the vector valued setting, the proof follows by the same argument as  in \cite[pg. 143]{NAUD}.  We give the details here for completeness. 

Let $J$ be the set of indices $(i,j)$ where $\Theta_i(x) \leq 1$ when $x \in X_j$. The statement of Lemma \ref{technical} is precisely that this set of indices is dense (recall Definition \ref{dense}) and hence $J \in \E_s$ as required. We will prove
\begin{equation}
| L^N_{s,q} f | \leq \N_s^J H = L_a( \chi_J H ).
\end{equation}
Fix $x$. Notice that if $x \notin \Int X_j$ for any $j$ then  for all branches $\a^N $ of $T^{-N}$, $\a^N x \notin Z^i_j$ and so $\chi_J(\a^N x) = 1$ for any $J$. More generally if $x \notin \Int X_j$ for any $j$ appearing as a coordinate in $J$ then $\chi_J(\a^N x) = 1$.
 Therefore
\begin{equation}
| [L^N_{s , q} f ](x)| \leq \sum_{ \a^N } \exp( \tau_a^N (\a^N x) ) H(\a^N x)  = \N^J_s[H](x).
\end{equation}

We are left to consider $x , J$ such that $x \in \Int(X_j) $ and $J$ contains $(i, j)$ for some $i$. 

Suppose that $(i , j) = (1 , j)$ and $(2 , j) \notin J$. Then for $\a^N \neq \a_1^N$ a branch of $T^{-N}$, $\chi_J(\a^N x) = 1$ (the only other possibility would have been $\a^N = \a_2^N$). Then using $\Theta_1(x) \leq 1$ gives
\begin{align} \label{eq:previouscalc}
| L^N_{s,q}[f](x)| &\leq \sum_{\a^N \neq \a^N_1 , \a^N_2 } \exp(\tau_a^N(\a^N(x) )H(\a^N(x)) \nonumber
\\ &+ (1 - 2\theta)  \exp(\tau_a^N(\a_1^N(x) )H(\a_1^N(x)) +  \exp(\tau_a^N(\a_2^N(x) )H(\a_2^N(x))\nonumber\\ 
&\leq \N^J_s[H](x).
\end{align}
The case $(i, j ) = (2,j)$ and $(1,j) \notin J$ is treated the same way. Finally, if $(1,j)$ and $(2,j)$ are in $J$ then $\Theta_1(x) , \Theta_2(x) \leq 1$ from which one can estimate
\begin{align*}
&| \exp ( [\tau_a^N + ib \tau^N ](\a_1^N x) ) f(\a_1^N x) +  \exp ( [\tau_a^N + ib \tau^N ](\a_2^N x) ) f(\a_2^N x) | \\
&\leq 
 (1 - \theta)  \exp(\tau_a^N(\a_1^N(x) )H(\a_1^N(x)) + (1-\theta)  \exp(\tau_a^N(\a_2^N(x) )H(\a_2^N(x)) \\
&\leq  \exp(\tau_a^N(\a_1^N(x) )\chi_J(\a_1^N x) H(\a_1^N(x)) +  \exp(\tau_a^N(\a_2^N(x) )\chi_J(\a_2^N x) H(\a_2^N(x)).
\end{align*}
Also noting that $\chi_J(\a^N x) =1$ when $\a^N \neq \a_i^N$, $i = 1,2$,  the previous inequality shows
\begin{equation}
| L^N_{s,q}[f](x)| \leq \N^J_s[H](x)
\end{equation}
in our final remaining case. The proof is complete.
\end{proof}

\section{Bounds for transfer operators: small imaginary part}\label{smallimaginarypart}

In this section we aim to supplement  Lemma \ref{powers} with uniform bounds for powers of transfer operators for small imaginary part of $s$. In this regime the bounds are due to a mixing property of the groups $\G_q$ that we explain now.

\subsection{Ingredients}\label{expingredients}
The critical ingredient is strong spectral radius estimates for complex valued measures on $\G_q$. We  define $E_q$ to be the space of functions of $\G_q$ that are orthogonal to all functions lifted from $\G_{q'}$ for $q' | q$.

Our strong spectral estimates are furnished by
\begin{dfn}[Property (MIX)]\label{KSdfn}
Let $\G$ be a free semigroup  spanned by generators $S \subset \SL_2(\Z)$ and consider the associated measure
\begin{equation}
\nu = \sum_ { s \in S } \delta_s 
\end{equation}
on $\SL_2(\Z)$. We write $\nu_q$ for the projection of this measure modulo $q$ and $\nu_q^{(R)}$ for the $R$ fold convolution. We say that $\G$ has the mixing property (MIX)  on the index set  $\Q \subset \mathbf{N}$ if for any $c_1 > 0$, there exist positive 
$c_2, c_3, c_4$ and $q_0$  such that for any $q \in \Q$ with $q > q_0$,  any  complex valued measure $\mu$ on $\G_q$  satisfying
\begin{equation}
\| \mu \|_1 < B 
\end{equation}
and
\begin{equation}
| \mu(x) | \leq Bq^{-c_1c_2} \nu_q^{(\lceil c_2 \log q \rceil)} (x) , \quad x \in \G_q
\end{equation}
 also  satisfies
\begin{equation}
\| \mu \star \varphi \|_2 \leq c_4 B q^{-c_3}  \| \varphi \|_2\quad\text{ for $\varphi \in E_q$.}
\end{equation}

\end{dfn}
In the paper \cite{BGSACTA}, Bourgain, Gamburd and Sarnak have established a flattening property which implies the analog of (MIX) for convex cocompact\footnote{Hence free by \cite{BUTTON}.} sub\textit{groups} of $\SL_2(\Z)$ and square free $\Q$ avoiding some finite modulus. This implication is also given in \cite{BGSACTA}. In Section \ref{modular}, we show how this implication can be made to work in our setting for square free index sets $\Q$.

We can now state the main Lemma of this Section.
\begin{lemma}\label{powerssmallfreq}
Let $q \in \Q$ where the semigroup $\G$ has the property (MIX).  For $b_0 > 0$ given, there are $a_0$, $q_0$, $\k$ and $\delta$ such that when $|a - s_0| < a_0$, $|b| < b_0$ and $N = \lceil  \k \log q \rceil$ with $q > q_0$ we have
\begin{equation}
 \| L^{n N }_{s,q} W \|_\Lip   \leq q^{- n \delta }  
\end{equation}
for all $E_q$-valued $W \in C^1( I ; \C^{\G_q} )$ with $\|W\|_\Lip = 1$. 
\end{lemma}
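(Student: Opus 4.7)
The plan is to follow the template of \cite{BGSACTA, OW}: establish an $L^2$ contraction at the block scale $N=\lceil\kappa\log q\rceil$ and then upgrade it to the Lipschitz bound using a priori estimates. Concretely, I aim to show that for $\kappa$ chosen large enough there is $\delta'>0$ with
\[
\int_K |L^{nN}_{s,q}W|^2\,d\nu_0 \le q^{-2n\delta'}
\]
for every $E_q$-valued $W\in C^1(I;\C^{\G_q})$ with $\|W\|_{C^1}\le 1$. The reduction to this $L^2$ bound uses a vector-valued a priori estimate along the lines of Lemma \ref{apriori}: since $|a-s_0|$ and $|b|$ are small, $\|L^m_a\|_{C^1}$ is bounded uniformly and a Cauchy--Schwarz step against $\nu_0$ loses only a polynomial factor in $q$, which can be absorbed by enlarging $\kappa$ (equivalently, by trading $\delta'$ for a slightly smaller $\delta$).

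For a single block I expand
\[
\|L^N_{s,q}W\|_{L^2(\nu_0)}^2 = \int_K\sum_{\alpha_1^N,\alpha_2^N} e^{-\bar s\tau^N(\alpha_1^Nx)-s\tau^N(\alpha_2^Nx)}\bigl\langle c_q^N(\alpha_1^Nx)W(\alpha_1^Nx),\,c_q^N(\alpha_2^Nx)W(\alpha_2^Nx)\bigr\rangle\,d\nu_0(x),
\]
the sum running over pairs of branches of $T^{-N}$. Because $c_q$ is locally constant and $c_0(I_1),\dots,c_0(I_k)$ freely generate $\G$, the assignment sending a length-$N$ word of branches to its cocycle value is a bijection onto the support of $\nu_q^{(N)}$. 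The Gibbs weights $e^{-a\tau^N}$, whose bounded-distortion properties are governed by Theorem \ref{RPFTheorem}, push forward to a weight on $\G_q$ comparable (up to multiplicative constants depending only on $\tau$) to the convolution measure $\nu_q^{(N)}$. The off-diagonal part of the bilinear form above can therefore be rewritten pointwise as $\langle \mu_{s,q}\star W(x),W(x)\rangle_{\ell^2(\G_q)}$ for a complex measure $\mu_{s,q}$ on $\G_q$ with $\|\mu_{s,q}\|_1=O(1)$; the diagonal part is $O(\gamma^{-N})=O(q^{-\kappa\log\gamma})$ by the eventually expanding hypothesis.

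At this point (MIX) supplies the contraction $\|\mu_{s,q}\star W(x)\|_{\ell^2(\G_q)}\le c_4q^{-c_3}\|W(x)\|_{\ell^2(\G_q)}$ for each $x$, provided its non-concentration hypothesis
\[
|\mu_{s,q}(g)|\le Bq^{-c_1c_2}\nu_q^{(\lceil c_2\log q\rceil)}(g),\qquad g\in\G_q,
\]
is satisfied. Verifying this is where $\kappa$ is fixed: the Gibbs weight at any single length-$N$ word is bounded by $Ce^{-cN}$ while $\nu_q^{(R)}(g)$ is bounded below by a negative power of $q$ uniformly on its support, so choosing $\kappa$ large enough in terms of $c,c_1,c_2$ and the order of $\G_q$ produces the required polynomial loss. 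Integrating over $x$ and iterating the block estimate $n$ times --- the $E_q$-condition being preserved automatically because $E_q$ is a $\G_q$-invariant subspace of $\C^{\G_q}$ for the right regular representation --- yields the desired $L^2$ decay, and a final appeal to the a priori estimate completes the passage to the $C^1$ norm.

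The main obstacle will be the combinatorial bookkeeping that underlies the preceding paragraph: one must carefully identify the various polynomial losses in $q$ (from $|\G_q|$, from Gibbs-measure distortion constants, from the Cauchy--Schwarz conversion between $L^2$ and $C^1$, and from the diagonal remainder) and check that a single choice of $\kappa$ simultaneously dominates all of them while still leaving a net $q^{-n\delta}$ contraction after $n$ iterations. Once this is done, the remainder of the argument is a fairly mechanical adaptation of \cite[Section 3]{OW} to the present expanding-interval setting, with no new spectral input required.
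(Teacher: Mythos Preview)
Your central step --- rewriting the bilinear form as $\langle \mu_{s,q}\star W(x),W(x)\rangle_{\ell^2(\G_q)}$ --- does not go through as you describe. In the expansion of $\|L^N_{s,q}W\|_{L^2}^2$ the vectors that appear are $W(\a_1^N x)$ and $W(\a_2^N x)$, not $W(x)$: these depend on the full branches $\a_i^N$, so the double sum over branches is \emph{not} a convolution of a measure on $\G_q$ against a single fixed vector. The cocycle $c_q^N(\a^N x)$ records which group element acts, but the \emph{argument} of $W$ also moves with the branch, and this coupling is exactly what blocks the identification you claim. (Your diagonal estimate $O(\g^{-N})$ is also off: with normalized weights the diagonal sum $\sum_{\a^N} e^{2\tau_a^N(\a^N x)}|W(\a^N x)|^2$ is of order one, not exponentially small.)

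The paper supplies the missing mechanism by a decoupling step. One writes $N = M + R$ with $R = \lceil c_2\log q\rceil$ dictated by (MIX), and uses the expanding property to replace $W(\a^N x)$ by $W(\a^M x_j)$ (for a basepoint $x_j$ depending only on which interval $\a^R x$ lands in) at the cost of $O(\g^{-M}\|W\|_{\Lip})$. After this approximation, for each fixed $\a^M$ the remaining sum over the $R$-tail genuinely becomes a convolution $\mu_{s,x,\a^M}\star\varphi_{W,\a^M}$ on $\G_q$, and one verifies the $\ell^1$ and pointwise hypotheses of (MIX) for $\mu_{s,x,\a^M}$ directly (Lemmas \ref{mul1} and \ref{pointwisemu}). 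Crucially, all of this is done for $[L^N_{s,q}W](x)$ itself, not for its square, so the output is an $L^\infty$ bound (Lemma \ref{infinitybound}); a parallel computation for the derivative gives the full $C^1$ bound (Lemma \ref{uniterated}), which is then iterated. There is no passage through $L^2(\nu_0)$ in this regime, and hence no $L^2\to C^1$ conversion with its attendant polynomial losses to manage.
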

The proof of this Lemma is given in the remainder of this Section.
\subsection{Relevance of mixing estimates}
We are now going to show how  mixing estimates arise naturally in the consideration of $L^N_{s,q}$. We have calculated already that for $W \in C^1(I)$ with $\| W \|_{C^1 } < \infty$ and taking on values only in the orthocomplement to constant functions 
\begin{equation}\label{eq:LNWexplicit}
[L^N_{s,q}W]  (x)= \sum_{\a^N} \exp( [ \tau^N_a + i b \tau^N ](\a^N x)  ) c_q^N(\a^N x)  W(\a^N x)
\end{equation}
where the sum is over branches of $T^{-N}$. It will be convenient to make the parametrization
\begin{equation}
N = M + R , \quad M , R > 0.
\end{equation}
For given $\a^N$ we can write uniquely
\begin{equation}
\a^N = \a^{M} \a^{R}
\end{equation}
where $\a^{M}$, $\alpha^{R}$ are branches of $T^{-M}$ and $T^{-R}$. In this case we write $\a^N > \a^M$. For each $i \in \lbrace 1 \ldots j \rbrace$ we choose $x_j \in I_J \cap K$.  We notice here that  for any 
 $x\in I$ with $\alpha^R(x) \in I_j$, if $\a^N > \a^M$ we have
\begin{equation}
d( \a^{N} x , \a^M x_j ) =  d(  \a^{M}  (\a^{R} x) ,  \a^{M} x_j ) \leq \frac{D}{\g^M} \diam( I )
\end{equation}
by the expanding property of $T$. Then 
\begin{equation}
\| W( \a^N x)  - W(\a^M x_j ) \| \leq   \frac{D}{ \g^M } \diam ( I )  \|W \|_\Lip.
\end{equation}
It follows then that 
\begin{align}
[L^N_{s,q}W]  (x) &=  \sum_{\a^N} \exp( [ \tau^N_a + i b \tau^N ](\a^N x)  ) c_q^N(\a^N x)  W(\a^M x_j) \\
&+ O \left( \| W \|_\Lip  \frac{D}{ \g^M } \diam ( I )  \sum_{ \a^N } \exp ( \tau^N_a (\a^N x)  )   \right).
\end{align}
We will assume that $D \g^{-M}$ is small, say $< 1/( 100 \diam(I))$ and note that the sum in the error term is
\begin{equation}
  \sum_{ \a^N }  \exp ( \tau^N_a (\a^N x)  )   = L^N_a [ 1 ](x)  = 1(x) =1 
\end{equation}
as the operator has been normalized. So 
\begin{equation}\label{eq:uncoupled}
[L^N_{s,q}W]  (x) =  \sum_{\a^N} \exp( [ \tau^N_a + i b \tau^N ](\a^N x)  ) c_q^N(\a^N x)  W(\a^M x_j)  + O ( \|W \|_\Lip \g^{-M } );
\end{equation}
we are abusing notation slightly here, inasmuch as the choice $x_j$ depends on $M$. This is an important estimate as it allows us access the expansion properties coming from $c_q$ by decoupling $M$ and $N$. 

Recall that $c_q$ was obtained by reducing $c_0 \mod q$ to obtain a mapping $c_q : I \to \G_q$. This mapping was reinterpreted as $c_q : I \to U ( \C^{\G_q} )$ via the right regular representation of $\G_q$.

For any specified $\a^M$ and $x \in I$ we construct the complex valued measure on $\G_q$
\begin{equation}
\mu_{s , x , \a^M } = \sum_{ \a^N > \a^M }  \exp( [ \tau^N_a + i b \tau^N ](\a^N x)  ) \delta_{c_q^R ( \a^R x )}
\end{equation}
where $\delta_{g}$ gives mass one to $g \in \G_q$.

For any $f \in C^1( I ; \C^{\G_q} )$,  $x \in I$ and $\a^M$ we construct complex valued measure $\varphi_{f,x,\a^M}$ by
\begin{equation}
\varphi_{f,\a^M} ( g ) = \sum_{ g \in \G_q } f ( \a^M x_j )\lvert_{g}  \:  \delta_{ c^M_q( \a^M x_j ) g  }
\end{equation}
where $c_q$ is thought of as $\G_q$ valued and $f(\a^M x_j)$ thought of as a $\C$-valued function on $\G_q$. The key fact here is that
\begin{align}
 [ \mu_{s , x , \a^M } \star \varphi_{f,\a^M} ] &= \sum_{g \in \G_q } \sum_{ \a^N > \a^M }  \exp( [ \tau^N_a + i b \tau^N ](\a^N x)  ) f ( \a^M x_j )\lvert_g   \delta_{c_q^R ( \a^R x )} \star   \delta_{ c^M_q( \a^M x_j ) g  } \\
&=  \sum_{g \in \G_q } \sum_{ \a^N > \a^M }  \exp( [ \tau^N_a + i b \tau^N ](\a^N x)  ) f ( \a^M x_j )\lvert_g   \delta_{c_q^N ( \a^N x )g} .
\end{align}
This means that
\begin{equation}
 [ \mu_{s , x , \a^M } \star \varphi_{f,\a^M} ]  =  \sum_{ \a^N > \a^M }  \exp( [ \tau^N_a + i b \tau^N ](\a^N x)  ) c_q^N(\a^N x) f ( \a^M x_j ).
\end{equation}
The reader should compare this with \eqref{eq:uncoupled}.

\subsection{Bounds for $\mu_{s , x, \a^M }$ }

We need bounds for $\|  \mu_{s , x , \a^M } \|_{1}$ and $ | \mu_{s , x , \a^M }(y)| $ pointwise in order to use the mixing feature of the $\G_q$. Let us bound the $L^1$ norm first. Firstly we write
\begin{equation}\label{eq:pointwisebound}
| \mu_{s , x , \a^M } | \leq \sum_{ \a^N > \a^M }  \exp( \tau^N_a (\a^N x)  ) \delta_{c_q^R ( \a^R x )}.
\end{equation}
Notice that
\begin{equation}\label{eq:splitup}
\tau_a^N ( \a^N x ) = \tau_a^M(\a^N x) + \tau_a^R(\a^R x)  .
\end{equation}
Then
\begin{equation}
 \| \mu_{s , x , \a^M } \|_1 \leq   \sum_{ \a^N > \a^M }  \exp( \tau^M_a (\a^N x)  ) \exp( \tau^R_a( \a^R x) ) .
\end{equation}
We now decouple: let $\a^N_0 = \a^N_0(\a^M)$ be some choice of $\a^N_0 > \a^M$. Then
\begin{equation}
 \tau^M_a (\a^N x)    -  \tau^M_a (\a^N_0 x)  = \sum_{ n = 0}^{M-1} \tau_a ( T^n \a^N x ) - \tau_a( T^n \a^N_0 x )
\end{equation}
and noting that $T^n \a^N x $ and $ T^n \a^N_0 x$ are within 
\begin{equation}
\frac{D}{\g^{ M - n } } \diam(I) 
\end{equation}
of one another, we have
\begin{align}
\tau_a^M(\a^N x) &\leq \tau_a^M(\a_0^N x) + D . \diam(I) \sup_{ y \in I } | [\tau_a]'(y) | \sum_{ n =  0}^{M-1} \frac{1}{\g^{ M - n }} \\
& \leq \tau_a^M(\a_0^N x) + \k_1 ( D, \g , I , \tau , a_0) 
\end{align}
for $| a- s_0 | < a_0$ (as $\tau_a$ is roughly constant in $a$ close to $s_0$). Therefore
\begin{align}
\| \mu_{s , x , \a^M } \|_1 &\leq  \exp( \k_1  + \tau^M_a (\a^N_0 x)  )  \sum_{ \a^N > \a^M }\exp( \tau^R_a( \a^R x) )  \\
&\leq  \exp( \k_1  + \tau^M_a (\a^N_0 x)  )  [ L^R_a 1 ](x) =  \exp( \k_1  + \tau^M_a (\a^N_0 x)  )  .
\end{align}
by the normalization of $L_a$. We record this bound in the following.
\begin{lemma}\label{mul1}
Given $a_0$ small enough,  $x$ and $\a^M$, there is $\k_1  =\k_1(  a_0)$ such that
\begin{equation}
\| \mu_{s , x , \a^M } \|_1 \leq \exp( \k_1  + \tau^M_a (\a^N_0 x)  ) ,
\end{equation}
for $|a- s_0| < a_0$. Here $\a^N_0$ is any branch of $T^{-N}$ such that $\a^N_0 > \a^M$.
\end{lemma}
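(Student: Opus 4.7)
The plan is essentially to implement the computation sketched in the paragraphs preceding the statement. First, I would take pointwise absolute values in the definition of $\mu_{s,x,\a^M}$; since $|e^{ib\tau^N(\a^N x)}| = 1$ and $c_q^R(\a^R x)$ is a unit mass in $\G_q$, this yields $|\mu_{s,x,\a^M}| \leq \sum_{\a^N > \a^M} \exp(\tau_a^N(\a^N x))\,\delta_{c_q^R(\a^R x)}$ pointwise, hence
\begin{equation*}
\|\mu_{s,x,\a^M}\|_1 \leq \sum_{\a^N > \a^M} \exp(\tau_a^N(\a^N x)).
\end{equation*}
Next, I would exploit the cocycle identity $\tau_a^N(\a^N x) = \tau_a^M(\a^N x) + \tau_a^R(\a^R x)$ arising from $N = M + R$ and $\a^N = \a^M \a^R$, which separates the summand into a part carried by the $M$-iterates and one depending only on $\a^R$.

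The core step is a bounded distortion estimate to decouple the $M$-part. Fix any reference branch $\a_0^N > \a^M$. Because both $\a^N$ and $\a_0^N$ begin with $\a^M$, for each $0 \leq n \leq M-1$ the points $T^n(\a^N x)$ and $T^n(\a_0^N x)$ both lie in the image of the remaining $(M-n)$-fold inverse branch of $T$, so the eventually expanding property gives
\begin{equation*}
|T^n(\a^N x) - T^n(\a_0^N x)| \leq D \g^{-(M-n)} \diam(I).
\end{equation*}
Summing $|[\tau_a]'|$ across these gaps over $n = 0, \ldots, M-1$ telescopes into a convergent geometric series, producing a constant $\k_1 = \k_1(a_0)$, depending only on $D$, $\g$, $\diam(I)$ and $\sup_{|a - s_0| < a_0} \|[\tau_a]'\|_\infty$, such that $\tau_a^M(\a^N x) \leq \tau_a^M(\a_0^N x) + \k_1$ uniformly in $x$, $\a^M$, and $\a^N > \a^M$.

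Substituting back and factoring out the $\a^N$-independent quantity $\exp(\k_1 + \tau_a^M(\a_0^N x))$, the remaining sum runs over all branches $\a^R$ of $T^{-R}$ and equals
\begin{equation*}
\sum_{\a^R} \exp(\tau_a^R(\a^R x)) = [L_a^R\,\mathbf{1}](x) = 1,
\end{equation*}
where the last equality uses the normalization $L_a \mathbf{1} = \mathbf{1}$ following \eqref{eq:perturbed}. This delivers the claim. The only genuine point of care is the distortion bound and the uniformity of $\|[\tau_a]'\|_\infty$ for $|a - s_0| < a_0$, which follows from the continuous dependence of $\tau_a$ on $a$ in $C^1$ together with the spectral perturbation theory for $\L_{-a\tau}$ used to define $h_a$; everything else is a routine rearrangement.
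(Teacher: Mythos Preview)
Your proposal is correct and follows essentially the same argument as the paper: take absolute values to reduce to $\sum_{\a^N>\a^M}\exp(\tau_a^N(\a^N x))$, split $\tau_a^N=\tau_a^M+\tau_a^R$, use the bounded distortion estimate (via the expanding property and $\|[\tau_a]'\|_\infty$) to replace $\tau_a^M(\a^N x)$ by $\tau_a^M(\a_0^N x)+\k_1$, and then recognize the remaining $\a^R$-sum as $[L_a^R\mathbf 1](x)=1$. The only cosmetic difference is that in the general Markov (non-full-shift) setting the $\a^R$-sum is a priori only $\le [L_a^R\mathbf 1](x)$ rather than equal to it, but this does not affect the bound.
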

Now we turn to the pointwise bound. We have immediately from  \eqref{eq:splitup} that
\begin{equation}
|  \mu_{s , x , \a^M } (y) |  \leq \sup_{ \a^N > \a^M }   \exp(  \tau^M_a ( \a^N x)  ) \exp ( \tau^R_a(\a^R x) )    \nu_q^{(R)} (y) 
\end{equation}
where $\nu_q^{(R)}$ is the measure associated to the random walk of length $R$ $\bmod \:q$ in the generators $S$.

Now repeating the arguments leading to Lemma \ref{mul1} gives
\begin{equation}
|  \mu_{s , x , \a^M } (y) | \leq   \exp( \k_1  + \tau^M_a (\a^N_0 x)  )  \exp ( \tau^R_a(\a^R x) ) \nu_q^{(R)} (y)  .
\end{equation}
As in \cite[pg. 133]{NAUD}, for any $\eta > 0$ there is $a_0>0$ such that when $|a - s_0| < a_0$ 
\begin{equation}
| \tau_a^R(x) - \tau_{s_0}^R (x) | \leq \eta R .
\end{equation}
Therefore for any $\eta > 0$, by forcing $|a - s_0 | < a_0$ we can ensure
\begin{equation}
|  \mu_{s , x , \a^M } (y) | \leq   \exp( \k_1  + \tau^M_a (\a^N_0 x)  )  \exp ( \eta R +  \tau^R_{s_0}(\a^R x) ) \nu_q^{(R)} (y)  .
\end{equation}
One calculates
\begin{equation}
|  \tau^R_{s_0}(\a^R x) )  + s_0 \tau^R( \a^R x ) | = | \log h_{s_0}(  x) -\log  h_{s_0} ( \a^R x ) | \leq \k_3
\end{equation}
as $h_{s_0}$, the eigenfunction of $\L_{-s_0\tau}$ with eigenvalue $1$, is bounded above and below away from zero. 
By the eventually positive property of $\tau$, there exist $N_0$ and $\k_4 > 0$ such that $\tau^{N_0} \geq \k_4$ on $T^{-N_0}(I)$. Together with the fact that $\tau$ is bounded this implies there are $\k_5$ and $\k_6$ such that
\begin{equation}
\tau^R( \a^R x ) \geq \k_6 R
\end{equation}
when $R > \k_5$.
Putting our bounds together, by choosing $\eta$ much less than $\k_6$ we have proved
\begin{lemma}\label{pointwisemu}
There are $a_0 , \k_1 , \k_2 , \k_5 $ and $\k_7 > 0$ such that when $|a - s_0 | < a_0$ and $\k_5 < R  $ we have
\begin{equation}
|  \mu_{s , x , \a^M } (y) | \leq  \exp( \k_1  + \tau^M_a (\a^N_0 x)  ) \exp( - \k_7 R ) \nu_q^{(R)} (y) 
\end{equation}
for $y \in \G_q$.
Here $\a_0^N$ can be any branch of $T^{-N}$ such that $\a_0^N > \a^M$. 
\end{lemma}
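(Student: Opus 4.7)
The plan is to build the required bound by combining the cocycle identity for $\tau^N_a$, bounded distortion (as already used in Lemma \ref{mul1}), local constancy of $\tau^R_a$ in $a$ near $s_0$, the explicit form of the normalization $\tau_a$, and finally the eventually positive property of $\tau$ to produce genuine exponential decay in $R$.

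First I would start from \eqref{eq:pointwisebound}. Since each $c_q^R(\alpha^R x)$ contributes one atom to the random-walk measure $\nu_q^{(R)}$, and the exponential weights split as $\tau_a^N(\alpha^N x)=\tau_a^M(\alpha^N x)+\tau_a^R(\alpha^R x)$ by \eqref{eq:splitup}, the pointwise mass satisfies
\begin{equation}
|\mu_{s,x,\alpha^M}(y)|\;\leq\;\Bigl(\sup_{\alpha^N>\alpha^M}\exp(\tau_a^M(\alpha^N x))\Bigr)\,\exp(\tau_a^R(\alpha^R x))\,\nu_q^{(R)}(y).
\end{equation}
Next I apply the same bounded-distortion estimate used to prove Lemma \ref{mul1}: since $T^n\alpha^N x$ and $T^n\alpha^N_0 x$ lie within $D\gamma^{-(M-n)}\operatorname{diam}(I)$ of each other and $[\tau_a]'$ is uniformly bounded for $|a-s_0|<a_0$, geometric summation gives $\tau_a^M(\alpha^N x)\leq \tau_a^M(\alpha_0^N x)+\kappa_1$ with a constant $\kappa_1=\kappa_1(a_0)$ independent of $\alpha^N>\alpha^M$.

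Then I would handle the $\tau_a^R$ factor by passing to $s_0$ and exploiting the normalization. Using continuity of $a\mapsto \tau_a$ (as used in \cite[pg.\ 133]{NAUD}), for any $\eta>0$ we can shrink $a_0$ so that $|\tau_a^R-\tau_{s_0}^R|\leq \eta R$. Because $P(-s_0\tau)=0$ by definition of $s_0$, the renormalized potential is
\[
\tau_{s_0}^R(\alpha^R x)=-s_0\tau^R(\alpha^R x)-\log h_{s_0}(x)+\log h_{s_0}(\alpha^R x),
\]
and since $h_{s_0}$ is bounded above and away from zero the last two terms contribute a bounded additive constant $\kappa_3$. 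Thus
\begin{equation}
\tau_a^R(\alpha^R x)\;\leq\;-s_0\,\tau^R(\alpha^R x)+\eta R+\kappa_3.
\end{equation}

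Finally I would invoke eventual positivity. By hypothesis there exist $N_0,\kappa_4>0$ with $\tau^{N_0}\geq\kappa_4$ on $T^{-N_0}(I)$, so by iterating and absorbing one bounded remainder from $\tau$ itself there exist $\kappa_5,\kappa_6>0$ with $\tau^R(\alpha^R x)\geq\kappa_6 R$ whenever $R>\kappa_5$. Combining the previous display with this lower bound produces
\[
\tau_a^R(\alpha^R x)\;\leq\;-(s_0\kappa_6-\eta)R+\kappa_3,
\]
and choosing $\eta<s_0\kappa_6/2$ (which forces a choice of $a_0$) yields the negative exponential $\exp(-\kappa_7 R)$ with $\kappa_7=s_0\kappa_6/2$. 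Substituting back gives the claimed bound. The only real subtlety is coordinating the two smallness choices, namely shrinking $a_0$ sufficiently so that both the distortion estimate controlling $\tau_a^M$ and the approximation $|\tau_a^R-\tau_{s_0}^R|\leq\eta R$ hold with a single $\eta$ that is dominated by $s_0\kappa_6$; this is routine since all constants involved depend only on $(\tau,T,s_0)$.
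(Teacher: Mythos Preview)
Your proof is correct and follows essentially the same route as the paper: split $\tau_a^N$ via the cocycle identity, control the $\tau_a^M$ piece by the bounded-distortion estimate of Lemma \ref{mul1}, pass from $\tau_a^R$ to $\tau_{s_0}^R$ at the cost of $\eta R$, cancel the pressure at $s_0$ and absorb the $\log h_{s_0}$ boundary terms into a constant, and finally use eventual positivity to get $\tau^R\ge \kappa_6 R$. The only cosmetic slip is in your first display, where the factor $\exp(\tau_a^R(\alpha^R x))$ should also sit under the supremum over $\alpha^N>\alpha^M$ (equivalently over $\alpha^R$); this is harmless since your subsequent bounds on $\tau_a^R(\alpha^R x)$ are uniform in $\alpha^R$.
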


We can now appeal to Definition \ref{KSdfn} (assuming the property (MIX) holds) to get by using  \eqref{eq:uncoupled},  Lemma \ref{mul1} and Lemma \ref{pointwisemu} that

\begin{align*}
\| [ L^N_{s,q} W ] (x) \| &\leq  \sum_{\a_M }  \| [ \mu_{s , x , \a^M } \star \varphi_{W,\a^M} ] \|_{l^2(\G_q)} + O( \|W \|_\Lip \gamma^{-M} ) \\
&\leq  c_4 q^{ - c_3} \exp( \k_1 ) \sum_{\a^M } \exp ( \tau^M_a (\a^N_0 x)  ) \|  \varphi_{W,\a^M} \|_{l^2(\G_q) }  + O( \| W \|_\Lip \gamma^{-M} )
\end{align*}
for some $c_2 , c_3, c_4$ provided by Definition \ref{KSdfn} when setting $c_1 = \k_7$ by taking 
\begin{equation}
R = \lceil c_2 \log q \rceil. 
\end{equation}
We have chosen for each $\a^M$ an $\a^{N}_0$ with $\a_0^N > \a_M$ and we are assuming the conditions in the Lemmas we have used are met. Since trivially 
\begin{equation}
 \|  \varphi_{W,\a^M} \|_{l^2(\G_q) }  \leq \| W \|_\infty
\end{equation}
we can continue to bound $\| [ L^N_{s,q} W ] (x) \| $ up to $O( \|W\|_\Lip \g^{-M})$ by
\begin{align*}
 c_4 q^{ - c_3 } \exp( \k_1 ) \| W \|_\infty \sum_{\a^M } \exp ( \tau^M_a (\a^N_0 x)  ) &\leq  c_4q^{ - c_3 } \exp( \k_1 ) \| W \|_\infty L^N_a[1](T^M\a^N_0 x)\\
 &=  c_4 q^{ - c_3  } \exp( \k_1 ) \| W \|_\infty.
\end{align*}
We have now proved, by choosing $N > \k_{10}\log q$ so that there is room for the requisite $R$ and big enough $M$ the following Lemma.
\begin{lemma}\label{infinitybound}
If the semigroup $\G$ has property (MIX)  for $q \in \Q$, there are $a_0 , q_0 , \k_{10}, \delta > 0$ and $\g' > 1$ such that when $|a - s_0 | < a_0$,  we have 
\begin{equation}
\| L^N_{s,q} W \|_\infty \leq q^{ -\delta} \| W \|_\infty +{\g '}^{-N }  \| W \|_\Lip
\end{equation}
when $N >\k_{10} \log q$, $q > q_0$, $q \in \Q$ and $W \in E_q$ with $\|W\|_\Lip < \infty$.
\end{lemma}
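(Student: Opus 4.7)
The plan is to assemble the pieces already constructed in this section. I would start from the identity \eqref{eq:uncoupled}, which for $N = M + R$ expresses
\begin{equation*}
[L^N_{s,q}W](x) = \sum_{\a^M} [\mu_{s,x,\a^M} \star \varphi_{W,\a^M}] + O(\|W\|_\Lip\, \g^{-M}),
\end{equation*}
so that the problem reduces to bounding the $\ell^2(\G_q)$-norm of each convolution. Since $W$ takes values in $E_q$, so does each function $\varphi_{W,\a^M}$ (this uses that $c_q^M$ acts by the right regular representation and $E_q$ is invariant), hence the hypothesis of Definition \ref{KSdfn} applies.

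Next, I would verify the two input estimates required by (MIX). Lemma \ref{mul1} gives
$\|\mu_{s,x,\a^M}\|_1 \leq \exp(\k_1 + \tau_a^M(\a_0^N x))$, which plays the role of $B$ in Definition \ref{KSdfn}. Lemma \ref{pointwisemu} gives, for $R$ large,
$|\mu_{s,x,\a^M}(y)| \leq B \exp(-\k_7 R)\, \nu_q^{(R)}(y)$. Thus setting $c_1 = \k_7$ in Definition \ref{KSdfn} and choosing $R = \lceil c_2 \log q \rceil$ ensures the hypotheses are met, yielding
\begin{equation*}
\|\mu_{s,x,\a^M} \star \varphi_{W,\a^M}\|_2 \leq c_4 q^{-c_3} \exp(\k_1 + \tau_a^M(\a_0^N x)) \|\varphi_{W,\a^M}\|_2.
\end{equation*}
Using the crude bound $\|\varphi_{W,\a^M}\|_2 \leq \|W\|_\infty$, summing over $\a^M$, and recognizing $\sum_{\a^M} \exp(\tau_a^M(\a_0^N x)) \leq L^M_a[1](T^M \a_0^N x) \cdot e^{O(1)} = e^{O(1)}$ by the normalization $L_a 1 = 1$, the main term is bounded by $C q^{-c_3}\|W\|_\infty$ uniformly in $x$.

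Finally, the two error terms need to be packaged into the stated form. For the first term, one picks any $\delta < c_3$; then for $q \geq q_0$ large enough the constant $C$ is absorbed, giving $q^{-\delta}\|W\|_\infty$. For the second term $\g^{-M}\|W\|_\Lip$ with $M = N - \lceil c_2 \log q \rceil$, I would fix any $\g' \in (1,\g)$ and note $\g^{-M} \leq \g'^{-N}$ provided $N(\log\g - \log\g') \geq c_2 \log q \cdot \log \g$; since $N \geq \k_{10}\log q$, this holds as soon as $\k_{10}$ is chosen large enough (depending on $c_2$, $\g$, and $\g'$). The mildest obstacle is simply bookkeeping this balance between $M$ and $R$: one must simultaneously ensure $R \geq \k_5$ (so Lemma \ref{pointwisemu} applies), $\g^{-M}\diam(I) \leq 1/100$ (so \eqref{eq:uncoupled} holds), and the exponent conversion above. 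All three requirements reduce to taking $\k_{10}$ sufficiently large and $q > q_0$ sufficiently large, which completes the proof.
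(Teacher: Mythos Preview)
Your proposal is correct and follows essentially the same route as the paper: decouple via \eqref{eq:uncoupled}, feed Lemmas \ref{mul1} and \ref{pointwisemu} into Definition \ref{KSdfn} with $c_1=\k_7$ and $R=\lceil c_2\log q\rceil$, sum over $\a^M$ using the normalization $L_a 1=1$, and then balance $M$ against $R$ to convert $\g^{-M}$ into $\g'^{-N}$. Your explicit remark that $\varphi_{W,\a^M}\in E_q$ (by invariance of $E_q$ under the regular representation) and your careful bookkeeping of the constraints on $\k_{10}$ are both slightly more explicit than the paper, but the argument is the same.
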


\subsection{Bounds for Lipschitz norms}
 In order to iterate Lemma \ref{infinitybound} we also need bounds for
\begin{equation}
\| L^N_{s,q} W \|_\Lip
\end{equation}
under the same conditions as in Lemma \ref{infinitybound}. This amounts to estimating
\begin{equation}
\sup_I | [ L^N_{s,q} W ]' |
\end{equation}
and so we can proceed along similar lines as before. Indeed one calculates from \eqref{eq:LNWexplicit} that
\begin{align}
[L^N_{s,q} W ]'(x) &= \sum_{\a^N }( [\tau^N_a + ib\tau^N]\circ \a^N )'(x)   \exp( [ \tau^N_a + i b \tau^N ](\a^N x)  ) c_q^N(\a^N x)  W(\a^N x) \\
&+ \sum_{\a^N}\exp( [ \tau^N_a + i b \tau^N ](\a^N x)  ) c_q^N(\a^N x) [W \circ \a^N]'(x)
\end{align}
using that $c_q^N$ is locally constant. The second set of terms are bounded by
\begin{equation}
\frac{D}{\g^N }    \sum_{\a^N } \exp(  \tau^N_a(\a^N x)   )   \|W\|_\Lip   
\end{equation}
which can be  bounded by 
\begin{equation}
\frac{D}{\g^N } \| W \|_\Lip L_a^N[1](x) = \frac{D}{\g^N } \| W \|_\Lip.
\end{equation}
So we have
\begin{equation}\label{eq:Ssplit}
[L^N_{s,q} W ]'(x) = \Sigma +  O ( \frac{D}{\g^N } \| W \|_\Lip )
\end{equation}
where 
\begin{equation}
\Sigma \equiv \sum_{\a^N }( [\tau^N_a + ib\tau^N]\circ \a^N )'(x)   \exp( [ \tau^N_a + i b \tau^N ](\a^N x)  ) c_q^N(\a^N x)  W(\a^N x) .
\end{equation}
We can go through the same decoupling argument as before to get
\begin{align}
\Sigma&=  \sum_{\a^N}( [\tau^N_a + ib\tau^N]\circ \a^N )'(x) \exp( [ \tau^N_a + i b \tau^N ](\a^N x)  ) c_q^N(\a^N x)  W(\a^M x_j) \\
&+ O \left( \| W \|_\Lip  \frac{D}{ \g^M } \diam ( I )  \sum_{ \a^N } | ([\tau^N_a + ib\tau^N]\circ \a^N )'(x)| \exp ( \tau^N_a (\a^N x)  )   \right). 
\end{align}
Note that since there are  constants $C$ and $a_0$ such that when $| a - s_0| < a_0$ we have
\begin{equation}
| [\tau_a^N \circ \a^N ]'(x) | \leq C
\end{equation}
for $x \in I$ (see for example \cite[pg. 138]{NAUD}), we have
\begin{equation}\label{eq:k11}
|  [ [ \tau_a^N + i b \tau^N ] \circ \a^N ]' (x) | \leq C + | b | \sup_{I} | \tau'|  \sum_{ i = 0 }^{N-1} \frac{D}{\g^N } \leq \k_{11}
\end{equation}
for some $\k_{11} = \k_{11}( a_0 , b_0 )$ when $|b| \leq b_0$. Therefore we have the decoupled equation
\begin{equation}
\Sigma =  \sum_{\a^N}( [\tau^N_a + ib\tau^N]\circ \a^N )'(x) \exp( [ \tau^N_a + i b \tau^N ](\a^N x)  ) c_q^N(\a^N x)  W(\a^M x)  + O_{b_0}(\| W \|_{\Lip} \g^{-M} )
\end{equation}
valid when $|b| < b_0$ and $|a| < a_0$ for some fixed $a_0$. We denote the first of these two terms by $\Sigma'$. Now similarly to before we define complex valued measures
\begin{equation}\label{eq:mudashdef}
\mu'_{s , x , \a^M } = \sum_{ \a^N > \a^M }  ( [\tau^N_a + ib\tau^N]\circ \a^N )'(x)\exp( [ \tau^N_a + i b \tau^N ](\a^N x)  ) \delta_{c_q^R ( \a^R x )}
\end{equation}
\begin{equation}
\varphi_{f,\a^M} ( g ) = \sum_{ g \in \G_q } f ( \a^M x_j )\lvert_{g}  \:  \delta_{ c^M_q( \a^M x_j ) g  }
\end{equation}
for $f \in C^1(I ; \G^q)$, $\a^M$ a branch of $T^{-M}$. Then the key observation is that
\begin{equation}\label{eq:keyS}
 \| \Sigma '\|  = \left\| \sum_{\a^M } \mu'_{s , x , \a^M } \star \varphi_{W,\a^M} \right\|_{l^2( \G_q ) } .
\end{equation}

\subsection{Bounds for $\mu'_{s , x , \a^M }$}
We have 
\begin{equation}
\| \mu'_{s, x ,\a^M } \|_1 \leq \sup_ I | [\tau^N_a + ib\tau^N]\circ \a^N )'(x) |  \sum_{\a^N > \a^M } \exp(  \tau^N_a (\a^N x)  ) .
\end{equation}
We bounded the sum before as $\exp ( \k_1 + \tau_a^M(\a_0^N x) )$ where $\a_0^N$ is any branch of $T^{-N}$ such that $\a_0^N > \a^M$ and $|a  -s_0| < a_0$ for small enough $a_0$. The supremum in front of the sum is bounded by $\k_{11}(b_0)$ when $|b| < b_0$, by \eqref{eq:k11}. Therefore
\begin{lemma}\label{mudashl1}
We can find $a_0 > 0$ such that for $b_0>0$ given, there is $\k_{12} = \k_{12}(a_0 , b_0) >0$ such that
\begin{equation}
\| \mu'_{s,x,\a^M } \|_1 \leq \k_{12} \exp( \tau_a^M ( a_0^N x) )
\end{equation}
whenever $|b| < b_0$ and $|a - s_0 | <a_0$, for each  $x \in T^{-N }I$ and $\a^M$. Here $\a_0^N$ can be any branch of $T^{-N}$ such that $\a_0^N > \a_M$.
\end{lemma}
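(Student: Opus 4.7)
The plan is to combine two ingredients that have already been assembled in the text preceding the lemma: the uniform bound \eqref{eq:k11} on the derivative factor $([\tau^N_a + ib\tau^N]\circ \a^N)'(x)$, and the decoupling/summation bound on $\sum_{\a^N > \a^M}\exp(\tau^N_a(\a^N x))$ that was carried out in the proof of Lemma \ref{mul1}.

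Concretely, I would start from the defining expression \eqref{eq:mudashdef} and apply the triangle inequality directly at the level of measures on $\G_q$:
\begin{equation}
\|\mu'_{s,x,\a^M}\|_1 \;\le\; \sum_{\a^N > \a^M} \bigl|([\tau^N_a + ib\tau^N]\circ \a^N)'(x)\bigr| \;\exp\bigl(\tau^N_a(\a^N x)\bigr),
\end{equation}
since the point masses $\delta_{c_q^R(\a^R x)}$ each have unit $L^1$ norm. Next, I would pull the derivative factor out of the sum using \eqref{eq:k11}: provided $|a-s_0|<a_0$ (so that the $a$-dependence of $\tau_a^N$ is controlled as in Naud, \cite[pg.~138]{NAUD}) and $|b|<b_0$, there is a constant $\kappa_{11}=\kappa_{11}(a_0,b_0)$ with
\begin{equation}
\sup_{x\in I,\ \a^N}\ \bigl|([\tau^N_a + ib\tau^N]\circ \a^N)'(x)\bigr|\;\le\;\kappa_{11}.
\end{equation}

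To handle the remaining sum $\sum_{\a^N>\a^M}\exp(\tau^N_a(\a^N x))$, I would reuse verbatim the decoupling computation from the proof of Lemma \ref{mul1}: split $\tau^N_a(\a^N x) = \tau^M_a(\a^N x)+\tau^R_a(\a^R x)$ as in \eqref{eq:splitup}, then for a fixed reference branch $\a_0^N > \a^M$ use the expanding property of $T$ to bound $\tau^M_a(\a^N x)-\tau^M_a(\a_0^N x)$ by $\kappa_1(a_0)$ uniformly over $\a^N > \a^M$. Pulling this constant out and summing over $\a^R$ leaves the inner factor $[L^R_a 1](x)=1$ by the normalization of $L_a$, yielding
\begin{equation}
\sum_{\a^N > \a^M}\exp(\tau^N_a(\a^N x))\;\le\;\exp\bigl(\kappa_1+\tau^M_a(\a_0^N x)\bigr).
\end{equation}

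Combining the two pieces gives the lemma with $\kappa_{12}:=\kappa_{11}\,e^{\kappa_1}$. There is no real obstacle here; the only mild point to verify is that the ranges of $a_0$ required for \eqref{eq:k11} and for the Lemma \ref{mul1} decoupling are compatible, which we arrange by taking $a_0$ to be the minimum of the two. The fact that the resulting bound is independent of the choice of $\a_0^N > \a^M$ is automatic from the construction.
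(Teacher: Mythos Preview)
Your proof is correct and follows essentially the same approach as the paper: bound the $L^1$ norm by the supremum of the derivative factor times the sum $\sum_{\a^N>\a^M}\exp(\tau^N_a(\a^N x))$, invoke \eqref{eq:k11} for the first and the Lemma~\ref{mul1} computation for the second, and set $\kappa_{12}=\kappa_{11}e^{\kappa_1}$. The paper's version is terser (it simply cites the earlier sum bound rather than re-deriving it), but the argument is identical.
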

To get a bound for $| \mu'_{s,x,\a^M }(y)|$ pointwise  we repeat the arguments leading up to Lemma \ref{pointwisemu} to get under the same conditions that
\begin{equation}
\exp(  \tau^N_a  (\a^N x)) \leq \exp( \k_1 + \tau_a^M(\a_0^N x) ) \exp( - \k_7 R ).
\end{equation}
Therefore by incorporating \eqref{eq:k11} we get
\begin{lemma}\label{pointwisemudash}
For any $b_0>0$, one can choose constants such that the following holds. There exist $a_0 > 0$ $, \delta >0 $ so that if $|a - s_0 | < a_0$,  $|b| < b_0$ and $\k_5 < R$ (cf. Lemma \ref{pointwisemu}) then  for sufficiently large $q$
\begin{equation}
| \mu'_{s,x,\a^M }(y)  | \leq \k_{12} \exp( \tau_a^M (\a_0^N x)  )\exp( - \k_7 R ) \nu_q^{(R)}(y)  ,\quad y \in \G_q
\end{equation}
where $\k_{12} = \k_{12}(b_0)$ and $\a_0^N$ is any branch of $T^{-N}$ with $\a_0^N > \a^M$.
\end{lemma}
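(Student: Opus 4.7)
The plan is to reduce this to the pointwise bound of Lemma \ref{pointwisemu} by separating off the extra derivative factor that distinguishes $\mu'_{s,x,\alpha^M}$ from $\mu_{s,x,\alpha^M}$. Starting from the definition \eqref{eq:mudashdef}, I would bound $|\mu'_{s,x,\alpha^M}(y)|$ by pulling out the supremum of the derivative factor $([\tau_a^N + ib\tau^N]\circ \alpha^N)'(x)$ over all branches $\alpha^N > \alpha^M$ (the Dirac supports in \eqref{eq:mudashdef} lie in different group elements only when $\alpha^R$ differs, so for each $y$ there is at most one branch contributing; in any case, the supremum bound suffices). By \eqref{eq:k11} this supremum is at most $\kappa_{11} = \kappa_{11}(a_0,b_0)$, provided $|b| \leq b_0$ and $|a-s_0| < a_0$.

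Next, the remaining expression to bound is exactly the pointwise version
\begin{equation}
\sum_{\alpha^N > \alpha^M} \exp(\tau_a^N(\alpha^N x))\, \delta_{c_q^R(\alpha^R x)}(y),
\end{equation}
evaluated at $y$, which is majorized pointwise by the measure appearing in \eqref{eq:pointwisebound}. Applying Lemma \ref{pointwisemu} (or repeating its short argument: split $\tau_a^N = \tau_a^M \circ \alpha^N \cdot \text{id} + \tau_a^R \circ \alpha^R$ as in \eqref{eq:splitup}, use the decoupling estimate $\tau_a^M(\alpha^N x) \leq \kappa_1 + \tau_a^M(\alpha_0^N x)$, compare $\tau_a^R$ and $\tau_{s_0}^R$ up to $\eta R$ by shrinking $a_0$, and exploit the eventual positivity of $\tau$ to get the factor $\exp(-\kappa_7 R)$ once $R > \kappa_5$), one obtains
\begin{equation}
\sum_{\alpha^N > \alpha^M} \exp(\tau_a^N(\alpha^N x))\, \delta_{c_q^R(\alpha^R x)}(y) \leq \exp(\kappa_1 + \tau_a^M(\alpha_0^N x))\exp(-\kappa_7 R)\, \nu_q^{(R)}(y).
\end{equation}

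Combining the two estimates and absorbing $\kappa_{11}\exp(\kappa_1)$ into a single constant $\kappa_{12}=\kappa_{12}(b_0)$ (which is legitimate because $\kappa_{11}$ depends only on $a_0$ and $b_0$, and we have fixed $a_0$ to satisfy the hypotheses of Lemma \ref{pointwisemu}) gives
\begin{equation}
|\mu'_{s,x,\alpha^M}(y)| \leq \kappa_{12}\exp(\tau_a^M(\alpha_0^N x))\exp(-\kappa_7 R)\, \nu_q^{(R)}(y)
\end{equation}
for any branch $\alpha_0^N > \alpha^M$. The constant $\delta$ in the statement is essentially $\kappa_7 c_2$ once $R = \lceil c_2 \log q \rceil$ is taken; this will enter later when the bound is combined with the (MIX) property, so for the statement of the lemma itself we need only record the exponential decay in $R$. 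There is no real obstacle here since all the hard work is already contained in \eqref{eq:k11} and in the proof of Lemma \ref{pointwisemu}; the only care needed is to verify that $\kappa_{11}$ depends only on $(a_0,b_0)$ and not on $N$, which is immediate from the geometric series $\sum_i D/\gamma^N$ in \eqref{eq:k11}.
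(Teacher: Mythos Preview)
Your proof is correct and follows essentially the same approach as the paper: bound the derivative factor by $\kappa_{11}$ via \eqref{eq:k11}, then apply the pointwise estimate of Lemma \ref{pointwisemu} to the remaining sum, and absorb $\kappa_{11}\exp(\kappa_1)$ into $\kappa_{12}$. One small slip: it is not true that for each $y$ at most one branch contributes (distinct $\alpha^R$ can collide modulo $q$), but as you immediately note, the supremum bound suffices and the argument is unaffected.
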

We now take $R$ as in the preamble to Lemma \ref{infinitybound}, $R = \lceil c_2 \log q \rceil$ where the constants are as before relative to $c_1 = \k_7$ in Definition \ref{KSdfn}.  Recall the constants $c_3 , c_4 , q_0$ from before and assume the mixing  property holds for our range of $q$. For each $\a^M$ choose an $\a_0^N > \a_M$. The arguments leading up to Lemma \ref{infinitybound} apply in our current setting to allow us to estimate $\| \Sigma' \|$.
Indeed, as we have $\| \varphi_{W , x , \a^M } \|_{l^2(\G_q )} \leq \|W \|_\infty$ we can estimate $\|\Sigma'\|$ from \eqref{eq:keyS}, Lemma \ref{mudashl1}, Lemma \ref{pointwisemudash} and Definition \ref{KSdfn} to get for $W \in E_q$
\begin{align}
\| \Sigma' \| &\leq  c_4 \k_{12} \| W \|_\infty q^{-c_3}  \sum_{ \a^M } \exp( \tau_a^M ( \a_0^N x)  )
\\ &\leq    c_4\k_{12} \| W \|_\infty q^{-c_3} L^N_a[1](T^M \a^M_0 x) \leq  C_4 \k_{12} \| W \|_\infty q^{-c_3}
\end{align}
whenever  $|a - s_0| < a_0$, $|b| < b_0$ are the ranges specified by previous Lemmas and $N > \k_{14} \log q$.
It now follows from \eqref{eq:Ssplit} that with these conditions on $N,  q , a , b$ we have in light of Lemma \ref{infinitybound}
\begin{equation}
\| L^N_{s,q} W \|_{\Lip} \leq \k_{15}  q^{-\delta} \| W \|_\infty + \k_{16} \g^{-N} \| W \|_\Lip  + \g '^{-N} \| W \|_\Lip
\end{equation}
for some $\delta > 0$ when $W \in E_q$ and $q \in \Q$ with $q > q_0$. In other words

\begin{lemma}\label{uniterated}
Suppose the (MIX) property holds for the $\G$ relative to $q \in \Q$. Then for any given $b_0$, there are
$a_0 ,  q_0 , \k_{14}, \delta' > 0$  such that when $|a - s_0 | < a_0$  and $|b| < b_0$ we have 
\begin{equation}
\| L^N_{s,q} W \|_\Lip \leq q^{ -\delta'} \| W \|_\Lip
\end{equation}
when $N >\k_{10} \log q$, $q > q_0$,  $q \in \Q$ and  $W \in C^1( I ; \C^{\G_q} )$ with values in $E_q$.
\end{lemma}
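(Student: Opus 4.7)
\medskip

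The plan is to bound $\|L^N_{s,q}W\|_\Lip = \|L^N_{s,q}W\|_\infty + \|[L^N_{s,q}W]'\|_\infty$ by treating the two summands separately. Lemma \ref{infinitybound} already furnishes the estimate
\begin{equation*}
\|L^N_{s,q}W\|_\infty \le q^{-\delta}\|W\|_\infty + (\g')^{-N}\|W\|_\Lip,
\end{equation*}
so the substantive new work is the derivative bound, obtained by transplanting the convolution/mixing argument of Section 4.2 from the sup-norm case to the derivative setting.

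First I would differentiate the explicit formula \eqref{eq:LNWexplicit}, using that $c_q^N$ is locally constant so that only the phase $\exp([\tau_a^N+ib\tau^N]\circ\a^N)$ and the factor $W\circ\a^N$ receive derivatives. This gives the decomposition \eqref{eq:Ssplit} of $[L^N_{s,q}W]'(x)$ into a main term $\Sigma$, in which the phase is differentiated, plus an error of size $O((D/\g^N)\|W\|_\Lip)$ absorbing the derivative of $W\circ\a^N$ via the expanding property. Inside $\Sigma$ I would parametrize $N = M + R$ and repeat the decoupling that led to \eqref{eq:uncoupled}, replacing $W(\a^N x)$ by $W(\a^M x_j)$ at a cost of $O_{b_0}(\|W\|_\Lip \g^{-M})$; the uniform estimate \eqref{eq:k11} on $|([\tau_a^N+ib\tau^N]\circ\a^N)'|$, which is available because $|b|\le b_0$, is what makes this cost finite and independent of $b$ in the relevant range.

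The resulting decoupled main term $\Sigma'$ can, as in \eqref{eq:keyS}, be written as the $\ell^2(\G_q)$-norm of $\sum_{\a^M}\mu'_{s,x,\a^M}\star\varphi_{W,\a^M}$, where $\mu'$ is the twisted complex measure \eqref{eq:mudashdef}. I would then apply the (MIX) property of Definition \ref{KSdfn}, with $c_1 = \k_7$ and $R = \lceil c_2\log q\rceil$, feeding in the $\ell^1$ bound of Lemma \ref{mudashl1} and the pointwise bound of Lemma \ref{pointwisemudash}; this yields, for each $\a^M$,
\begin{equation*}
\|\mu'_{s,x,\a^M}\star\varphi_{W,\a^M}\|_{\ell^2(\G_q)} \le c_4\k_{12}\,q^{-c_3}\exp(\tau_a^M(\a_0^N x))\,\|W\|_\infty.
\end{equation*}
Summing over $\a^M$ and using the normalization $L_a^N[1] = 1$ collapses the sum to $c_4\k_{12}q^{-c_3}\|W\|_\infty$, so that altogether
\begin{equation*}
\|[L^N_{s,q}W]'\|_\infty \le \k_{15}\,q^{-\delta}\|W\|_\infty + \bigl(\k_{16}\g^{-N} + (\g')^{-N}\bigr)\|W\|_\Lip.
\end{equation*}
Adding this to the bound of Lemma \ref{infinitybound} and choosing $N > \k_{14}\log q$ large enough to absorb the $\g^{-N}$ and $(\g')^{-N}$ terms into a single $q^{-\delta'}$ with a slightly smaller exponent $\delta' < \delta$ produces the claim.

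The main obstacle will be preventing the $b_0$-dependent constants from spoiling the $q$-uniformity in the derivative estimate. The point that makes everything work is that, because $|b|\le b_0$, the derivative $([\tau_a^N+ib\tau^N]\circ\a^N)'$ is bounded by $\k_{11}(a_0,b_0)$ via \eqref{eq:k11}, so the $\ell^1$ and pointwise bounds on $\mu'_{s,x,\a^M}$ differ from those on $\mu_{s,x,\a^M}$ only by the finite factor $\k_{12}(b_0)$; this is precisely what permits (MIX) to give the same order $q^{-c_3}$ saving in both the sup-norm and derivative arguments, so that once $|a-s_0|$ is small and $q$ is large enough to accommodate $R=\lceil c_2\log q\rceil$ and an $M$ sufficiently large for the decoupling error, the combined bound closes up to the stated form.
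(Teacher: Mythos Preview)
Your proposal is correct and follows essentially the same approach as the paper: differentiate \eqref{eq:LNWexplicit}, isolate the main term $\Sigma$ as in \eqref{eq:Ssplit}, decouple via $N=M+R$ using the bound \eqref{eq:k11} (which is where the restriction $|b|\le b_0$ enters), rewrite the decoupled piece as a convolution via \eqref{eq:keyS}, and then feed Lemmas \ref{mudashl1} and \ref{pointwisemudash} into (MIX) with $R=\lceil c_2\log q\rceil$ to get the $q^{-c_3}$ saving, finally combining with Lemma \ref{infinitybound}. The paper's own argument is exactly this chain of steps, with the same choice of constants and the same closing observation that the exponential contractions $\g^{-N}$, $(\g')^{-N}$ are absorbed once $N$ exceeds a multiple of $\log q$.
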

Lemma \ref{powerssmallfreq} follows by iterating Lemma \ref{uniterated} and relabeling constants. This concludes our discussion of spectral estimates for small imaginary values.

\subsection{The new subspace structure and the proof of main Theorem \ref{mainthm}}
We can now show how the main Theorem \ref{mainthm}  follows from our estimates. Part \ref{mainsecond} was established in Proposition \ref{bigimaginaryprop} so it remains to show how part \ref{mainfirst} follows from our results.

We note first the following consequence of Lemma \ref{powerssmallfreq}.
\begin{lemma}\label{powerssmallfreq2}
Suppose that the (MIX) property for $\G$ holds for $q \in \Q$.
For all $b_0\in \R$, there are $0 < \rho < 1$, $a_0$, $q_0$ and $C$ such that when $|a -s_0 |<a_0$, $|b|\leq b_0$ and
 $q_0 < q \in \Q$, we have for all $m > 0$
\begin{equation}
\| L^{m}_{s,q} f \|_{C^1} \leq  C q^C \rho^m \| f \|_{C^1} \quad\text{when $f \in E_q$.}
\end{equation}

\end{lemma}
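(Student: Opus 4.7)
The plan is to bootstrap Lemma \ref{powerssmallfreq}, which already provides the decisive decay estimate along the arithmetic progression of exponents $nN$ with $N = \lceil \kappa \log q \rceil$, into a clean exponential bound $Cq^{C}\rho^m$ valid for every positive integer $m$. Since $\Lip = C^1$ under the conventions of this paper, no norm conversion is required.

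First I would verify that $L_{s,q}$ preserves the space of $E_q$-valued $C^1$ functions, so that Lemma \ref{powerssmallfreq} may be applied iteratively. The subspace $E_q \subset \C^{\G_q}$ is, by definition, the orthogonal complement to the span of functions lifted from quotients $\G_{q'}$ with $q' \mid q$. Right translation by any $g \in \G_q$ sends a lift to another lift (since $\pi(hg) = \pi(h)\pi(g)$ for the projection $\pi : \G_q \to \G_{q'}$), so the span of lifts is right-invariant and hence so is $E_q$. Because the twist $c_q$ acts pointwise through the right regular representation, $L_{s,q}$ preserves $C^1(I; E_q)$.

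Next I would decompose $m = nN + r$ with $0 \leq r < N$ and split $L^m_{s,q} f = L^r_{s,q}(L^{nN}_{s,q} f)$. Lemma \ref{powerssmallfreq} applied to the $E_q$-valued function $L^{nN}_{s,q} f$ gives
\[
\|L^{nN}_{s,q} f\|_{C^1} \leq q^{-n\delta}\|f\|_{C^1}.
\]
For the residual $r$ iterations I would invoke a uniform a priori bound $\|L_{s,q}\|_{C^1\to C^1}\leq B_0$ with $B_0 = B_0(a_0, b_0)$ independent of $q$; this follows from the unitarity of $c_q$, boundedness of $\tau_a$ and $\tau$, and the restriction $|b|\leq b_0$ (alternatively, it is contained in Lemma \ref{apriori}). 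Iterating at most $N$ times produces $B_0^N \leq q^{C_1}$ with $C_1 = \kappa \log B_0$, yielding
\[
\|L^m_{s,q} f\|_{C^1} \leq q^{C_1}\cdot q^{-n\delta}\|f\|_{C^1}.
\]

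Finally I would convert the $q$-aspect decay into an $m$-aspect decay. Using $n \geq m/N - 1$ and $N \leq \kappa \log q + 1$, for $q \geq q_0$ large enough
\[
q^{-n\delta} \leq q^{\delta}\cdot \exp\!\left(-\frac{\delta m \log q}{N}\right) \leq q^{\delta}\cdot \rho^m,
\]
with $\rho = \exp(-\delta/(2\kappa)) < 1$. Absorbing the $q^{\delta}$ factor into the polynomial prefactor produces the desired bound $\|L^m_{s,q} f\|_{C^1}\leq C q^{C}\rho^m \|f\|_{C^1}$. The substantive work has already been done in Lemma \ref{powerssmallfreq}; the present statement amounts to book-keeping that interpolates sharp decay at multiples of $N$ with a crude uniform bound on intermediate exponents, and repackages the $q$-decay as a uniform exponential rate $\rho^m$ at the cost of a polynomial factor $q^C$. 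The only step where one could conceivably go astray is ensuring that the a priori constant $B_0$ is genuinely independent of $q$, which is immediate from the unitarity of the twist.
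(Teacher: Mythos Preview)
Your argument is correct and is precisely the standard bookkeeping the paper has in mind; the paper itself omits the proof entirely, saying only that it is ``an easy exercise'' whose details can be read off the proof of \cite[Theorem 4.3]{OW}. Your decomposition $m = nN + r$, use of Lemma \ref{powerssmallfreq} on the block of length $nN$, crude uniform bound $B_0^r \leq q^{C_1}$ on the remainder, and conversion $q^{-n\delta} \leq q^{\delta}\rho^m$ via $n \geq m/N - 1$ is exactly how one fills in this exercise.
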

This is an easy exercise and the reader can get the details from the proof of \cite[Theorem 4.3]{OW}.

Recall the new subspace structure of $\G_q$. For any $q' | q$ there is a projection $\G_q \to \G_{q'}$. The kernel of this projection will be denoted $\G_q(q')$, the congruence subgroup of level $q'$ in $\G_q$. These have the property that if $q'' | q'$ then $\G_q(q') \leq \G_q(q'')$. This groups give an orthogonal decomposition of the right regular representation 
\begin{equation}\label{eq:decompregular}
\C^{\G_q} = \bigoplus_{q' | q } E^q_{q'} 
\end{equation}
where $E^q_{q'}$ consists of functions invariant under $\G_q(q')$ but not invariant under $\G_q(q'')$ for any $q'' | q'$, $q'' \neq q'$. Then the $E_q$ from before matches $E^q_q$ as defined here.

The decomposition \eqref{eq:decompregular} gives rise to a corresponding direct sum decomposition
\begin{equation}
C^1( I ; \C^{\G_q})  = C^1(I ) \oplus \bigoplus_{1 \neq q' | q} C^1( I ; E^q_{q'} ).
\end{equation}
It is clear that the subspaces $E^q_{q'}$ are invariant under the transfer operator $L_{s,q }$ and taking derivatives.

Also note that if $f \in E^q_{q'}$ then $f$ descends to a well defined function $F$ on $\G_q / \G_q(q') \cong \G_{q'}$ which is not invariant under any congruence subgroup of $\G_{q'}$, hence in $E^{q'}_{q'}$. Also, if $G$ is a function in $E^{q'}_{q'}$ then $G$ lifts through the previous isomorphism to a function $g$ in $E^q_{q'}$ for any $q' | q$. This gives rise to an map of Banach spaces
\begin{equation}
\Phi_{q,q'} :C^1(I ; E^{q'}_{q'}) \to C^1(I ;  E^{q}_{q'} )
\end{equation}
for any $q' | q$ with the property that
\begin{equation}
\| \Phi_{q,q'} ( f ) \|_{C^1}  = \sqrt{ | \G_q(q') | } \| f \|_{C^1} .
\end{equation}

 This map is equivariant under the transfer operators in the sense that
\begin{equation}
\Phi_{q,q'} [L_{s,q' } f] = L_{s,q} \Phi_{q,q'}[f]
\end{equation}
for any $f \in E^{q'}_{q'}$. In other words, the action of $L_{s,q}$ on a summand in \eqref{eq:decompregular} is determined by the action of the corresponding transfer operator on $E^{q'}_{q'}$ for some $q' | q$. We decompose $f \in C^1(I ; \C^{\G_q} )$ as 
\begin{equation}
f = f_1 + \sum_{ 1 \neq q' | q} f_{q'}
\end{equation}
with $f_{q'} \in E^q_{q'}$. 
It is here that the finite bad modulus $Q_0$ of Theorem \ref{mainthm} enters. If we assume that $q$ has no proper divisors  $\leq q_0$ from Lemma \ref{powerssmallfreq2}, then for any $m$, with all norms $C^1$ norms,
\begin{align}
 \| L^{m}_{s,q} f -  L^{m}_{s,q} f_1 \| &\leq  \sum_{q_0 < q' | q} \| L^{m}_{s,q} f_{q'} \| \\
&=   \sum_{q_0 < q' | q} \sqrt{ | \G_q(q') | } \| L^{m}_{s,q'}  \Phi_{q,q'}^{-1} f_{q'} \|\\
& \leq  C \sum_{q_0 < q' | q} \sqrt{ | \G_q(q') | } (q')^C \rho^m \|  \Phi_{q,q'}^{-1} f_{q'} \| \\
&\leq   C q^C \rho^m \sum_{1 \neq q' | q} \|  f_{q'} \|.
\end{align}
This bound can be changed to
\begin{equation}
 \| L^{m}_{s,q} f -  L^{m}_{s,q} f_1 \|  \leq C' q^{C'} \rho^m \| f \| 
\end{equation}
for some $C' = C'(\G , \Q , b_0) $   by noting that individually
\begin{equation}
\| f_{q' } \| \leq \| f \|
\end{equation}
and that any number $q$ has $\ll_\e q^\e$ divisors for any $\e > 0$. The analogous estimates hold for $\L_{s,q}$ (by perturbation theory and \eqref{eq:perturbed}). That is, by possibly adjusting constants slightly and decreasing $a_0$
\begin{equation}
 \| L^{m}_{s,q} f -  L^{m}_{s,q} f_1 \|  \leq C' q^{C'} \rho^m \| f \|  .
\end{equation}
In particular part \ref{mainfirst} of Theorem \ref{mainthm} now follows from the case that $f_1 = 0$ so that $f \in C^1( I ;\C^{\G_q} \ominus 1)$.


\section{The modular group}\label{modular}
\def\S{\mathcal{S}}
\subsection{Features of the group}
In this section we give results about random walks in the modular group $\tilde{G} = \SL_2(\Z)$.
Firstly we have \cite{SERRETREES}
\begin{equation}\label{eq:amalg}
G \equiv \tilde{G}/ Z  = \mathrm{PSL}_2(\Z)  \cong \Z /2\Z \star \Z / 3\Z 
\end{equation}
where $\star$ stands for free amalgamated product and $Z$ is the center of $\tilde{G}$. Let $a$ and $b$ be generators for $G$ of order 2 and 3 respectively according to \eqref{eq:amalg}. We let $d$ denote the word distance between elements of $G$ with respect to these generators and will write $B_R(g)$ for the associated balls.
We will need the following fact about the free product of cyclic groups. The following can be deduced from \cite[pg. 209, Theorem 4.5]{MKS}.
\begin{lemma}\label{commutingelements}
The centralizer of any element $g \in G$ is either a conjugate of the $\Z / 2\Z$ or $\Z / 3\Z$ factor when $g$ is in that conjugate of that  factor, or an infinite cyclic group.
\end{lemma}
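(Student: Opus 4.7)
The plan is to invoke Bass-Serre theory for the free product decomposition $G=\mathbb{Z}/2\mathbb{Z}\star\mathbb{Z}/3\mathbb{Z}$; the normal form arguments from \cite{MKS} yield the same conclusions but are bookkeeping-heavy, whereas the tree picture makes the casework transparent. Let $T$ be the Bass-Serre tree of the amalgamated product \eqref{eq:amalg}, on which $G$ acts with two orbits of vertices (stabilizers equal to the conjugates of the two factors) and one orbit of edges with trivial stabilizer. Any $g\in G\setminus\{1\}$ acts on $T$ either as an elliptic isometry (fixing at least one vertex) or as a hyperbolic isometry (translating along a unique axis $\ell_g\subset T$).

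\emph{Elliptic case.} Here $g$ fixes some vertex $v$, so $g$ lies in $\mathrm{Stab}_G(v)$, which is a conjugate $xAx^{-1}$ of one of the two factors $A\in\{\mathbb{Z}/2\mathbb{Z},\mathbb{Z}/3\mathbb{Z}\}$; since $A$ has prime order, $g$ generates the whole factor. Because edge stabilizers are trivial, a nontrivial element of a vertex stabilizer fixes no edge, and therefore fixes exactly the single vertex $v$. Hence any $h\in G$ with $hgh^{-1}=g$ preserves the fixed-point set $\{v\}$, i.e.\ fixes $v$, so $h\in xAx^{-1}$. This gives the first alternative of the lemma.

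\emph{Hyperbolic case.} Now $g$ translates along its axis $\ell_g$; the uniqueness of $\ell_g$ forces any $h$ commuting with $g$ to preserve $\ell_g$ setwise. The setwise stabilizer of $\ell_g$ in $G$ fits into a short exact sequence $1\to \mathrm{Stab}^+(\ell_g)\to \mathrm{Stab}(\ell_g)\to \{\pm 1\}\to 1$, where $\mathrm{Stab}^+(\ell_g)$ is the orientation-preserving (translation) subgroup. Since edge stabilizers in $T$ are trivial, $\mathrm{Stab}^+(\ell_g)$ injects into the translation group of $\ell_g\cong\mathbb{R}$, hence is infinite cyclic, generated by the primitive translation $g_0$ along $\ell_g$. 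Any orientation-reversing element $r\in\mathrm{Stab}(\ell_g)$ satisfies $rg_0r^{-1}=g_0^{-1}$, so $r$ fails to commute with $g=g_0^k$ unless $k=0$; thus the centralizer of $g$ equals $\mathrm{Stab}^+(\ell_g)=\langle g_0\rangle\cong\mathbb{Z}$, giving the second alternative.

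The main obstacle is simply confirming the two structural facts about the Bass-Serre tree: triviality of edge stabilizers (immediate from the amalgamated product being a plain free product) and the absence of torsion in $\mathrm{Stab}^+(\ell_g)$ (immediate once one knows edge stabilizers are trivial, since a torsion translation of $\ell_g$ would fix some edge). Both are standard, so the argument should go through cleanly; one could alternatively replicate the calculation purely via the normal form in $\mathbb{Z}/2\mathbb{Z}\star\mathbb{Z}/3\mathbb{Z}$ as in \cite[pg.~209]{MKS}, but the tree argument seems cleanest.
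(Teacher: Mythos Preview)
Your argument is correct. The paper does not give a proof of this lemma at all; it simply cites \cite[pg.~209, Theorem~4.5]{MKS}, which computes centralizers in free products via the normal-form machinery of Magnus--Karrass--Solitar. Your approach via the Bass--Serre tree of $\mathbb{Z}/2\mathbb{Z}\star\mathbb{Z}/3\mathbb{Z}$ is a genuinely different route: instead of tracking reduced words and conjugates combinatorially, you translate the problem into the geometry of a tree action with trivial edge stabilizers, where the elliptic/hyperbolic dichotomy immediately separates the torsion and torsion-free cases, and uniqueness of fixed vertices (resp.\ axes) pins down the centralizer. The MKS citation has the advantage of being self-contained and elementary, requiring no tree theory, but at the cost of opaque bookkeeping; your argument is more conceptual and would generalize with essentially no change to an arbitrary free product of finite groups (and, with minor modification, to amalgams with trivial edge groups), whereas the specific MKS theorem would need to be re-located. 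One small stylistic point: you correctly restrict to $g\neq 1$ at the outset, which the lemma as stated leaves implicit; this is harmless since the identity case is vacuous for the applications in Lemmas~\ref{largecomm} and~\ref{vanishingcomm}.
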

As a consequence of the Kurosh subgroup theorem \cite[Corollary 4.9.1]{MKS} one also has
\begin{lemma}
The only torsion elements of $G$ are conjugates of elements of the finite factors.
\end{lemma}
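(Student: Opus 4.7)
The plan is to apply the Kurosh subgroup theorem to the finite cyclic subgroup generated by a torsion element. Let $g \in G$ be a torsion element and consider the cyclic subgroup $H = \langle g \rangle \leq G$; by hypothesis $H$ is finite.

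Since $G \cong \Z/2\Z \star \Z/3\Z$ by \eqref{eq:amalg}, the Kurosh subgroup theorem (\cite[Corollary 4.9.1]{MKS}) provides a decomposition
\begin{equation}
H \cong F \ast \bigast_{i} \left( x_i A_i x_i^{-1} \cap H \right),
\end{equation}
where $F$ is a free group and each $A_i$ is one of the factors $\Z/2\Z$ or $\Z/3\Z$. The key observation is that any nontrivial free group is infinite, and any free product of two or more nontrivial groups is infinite. Since $H$ is finite, the decomposition must collapse: either $H$ is trivial, or it consists of a single conjugate factor $x A x^{-1} \cap H$ with $A \in \{\Z/2\Z, \Z/3\Z\}$ and $F$ trivial.

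In the latter case, $H$ is contained in the conjugate $x A x^{-1}$ of one of the finite factors, so in particular $g \in x A x^{-1}$. This is precisely the assertion of the lemma. The step requiring any real care is ensuring that a finite subgroup of the free product indeed admits no nontrivial free factor in its Kurosh decomposition, but this is immediate from finiteness once the decomposition is written down, so there is no substantive obstacle here — the lemma is essentially a direct corollary of Kurosh.
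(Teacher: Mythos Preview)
Your proposal is correct and follows exactly the approach the paper indicates: the paper simply cites the Kurosh subgroup theorem \cite[Corollary 4.9.1]{MKS} without spelling out the argument, and your proof supplies the standard deduction from that theorem. There is nothing to add.
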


The following is a version of a Lemma of  Bourgain and Gamburd \cite[Lemma 3]{BOURGAINGAMBURDEXP} for a free product.
\begin{lemma}\label{largecomm}
Let $R\geq 2$. If $\S \subset B_{2R}( e )$ satisfies $|\S| \geq R^6$, then $|[\S,\S]| \geq R^3$.
\end{lemma}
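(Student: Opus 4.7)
The plan is to argue by contradiction, adapting to $G = \Z/2\Z \star \Z/3\Z$ the free group argument of Bourgain--Gamburd \cite[Lemma 3]{BOURGAINGAMBURDEXP}; the structural input about centralizers is supplied by Lemma \ref{commutingelements}. Suppose $|[\S, \S]| < R^3$. Since $|\S| \geq R^6 > 1$ one can choose some $s_1 \in \S$ with $s_1 \neq e$ and consider the map $\phi : \S \to [\S, \S]$ defined by $\phi(s_2) = [s_1, s_2]$. A direct manipulation shows that $\phi(s_2) = \phi(s_2')$ if and only if $(s_2')^{-1} s_2$ belongs to the centraliser $C(s_1)$, so every fibre of $\phi$ is contained in a single left coset of $C(s_1)$. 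Because the image of $\phi$ has fewer than $R^3$ elements, pigeonhole yields a fibre of cardinality strictly greater than $|\S|/R^3 \geq R^3$.

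Next, the plan is to split on the structure of $C(s_1)$ provided by Lemma \ref{commutingelements}. If $s_1$ is a torsion element, then $C(s_1)$ is a conjugate of either the $\Z/2$ or the $\Z/3$ factor of $G$, so every coset of $C(s_1)$ has at most $3$ elements, which already contradicts the fibre size exceeding $R^3 \geq 8$. If $s_1$ is non-torsion, then $C(s_1) = \langle c \rangle$ is infinite cyclic, and the task reduces to bounding $|x_0 \langle c \rangle \cap B_{2R}(e)|$ by $O(R)$, uniformly in $x_0 \in B_{2R}(e)$ and in the primitive generator $c$. Write $c = w c_0 w^{-1}$ with $w$ of minimal length and $c_0$ cyclically reduced; then no cancellation occurs in $c^n = w c_0^n w^{-1}$, so $|c^n| = 2|w| + |n|\cdot |c_0|$. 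The crucial observation is that any cyclically reduced non-torsion element of $\Z/2\Z \star \Z/3\Z$ must alternate between letters of the two factors and so has length $|c_0| \geq 2$. Combined with $|c| \leq |s_1| \leq 2R$ and the triangle inequality, this forces $|n| \leq 2R+1$ whenever $|x_0 c^n| \leq 2R$, giving a coset bound of at most $4R+3$. For $R \geq 3$ this contradicts the fibre size, and for $R = 2$ the hypothesis $|\S| \geq R^6 = 64$ is vacuous because $|B_4(e)| < 64$ in $G$.

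The main obstacle is this uniform coset bound in the non-torsion case, with implied constant independent of $s_1$ and of its primitive root $c$. In a free group one could invoke $|c^n| = |n|\cdot |c|$ directly; in the amalgamated product one must instead argue via cyclic reduction, the lower bound $|c_0| \geq 2$ on cyclically reduced non-torsion words (here the specific structure of $\Z/2\Z \star \Z/3\Z$ enters), and control of the conjugating word $w$ by $|s_1|$. Once this bound is in place, the remainder of the argument is a routine pigeonhole.
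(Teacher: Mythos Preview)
Your argument is correct and follows essentially the same strategy as the paper's proof: contradiction via pigeonhole, then the centralizer dichotomy from Lemma~\ref{commutingelements}, then a length estimate for powers of a cyclically reduced non-torsion element. Two minor differences are worth noting. First, the paper performs a double pigeonhole (a popular commutator value $A$, then a popular partner $B$), whereas you fix an arbitrary $s_1\neq e$ and pigeonhole once on $s_2\mapsto[s_1,s_2]$; your version is slightly cleaner and yields the same fibre bound $>R^3$. Second, in the non-torsion case the paper conjugates the generator $W$ of the centralizer to a cyclically reduced $W^{h'}$ with $|h'|\le R$ and bounds the \emph{difference set} $(\S_1^{h'})^{-1}\S_1^{h'}\subset\langle W^{h'}\rangle\cap B_{4R}(e)$, while you bound the coset directly via $|c|\le|s_1|\le 2R$; both routes give a bound of order $R$ on the fibre. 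Your explicit disposal of $R=2$ by the vacuous hypothesis $|B_4(e)|<64$ is a nice touch. One small caveat: in a free product the identity $|c^n|=2|w|+|n|\,|c_0|$ can be off by $1$ due to syllable merging (not cancellation) at the junction of $w$ and $c_0$, but the same correction occurs in $|c|$ and in $|c^k|=|s_1|$, so your inequality $|c|\le|s_1|$ and the subsequent bound $|n|\le 2R$ survive unchanged.
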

\begin{proof}
Suppose for contradiction's sake  that $|[ \S , \S ]| < R^3$. Then we can find $A$ such that
\begin{equation}
\#\{  \{g_1, g_2\} \subset \S : [ g_1 , g_2 ] = A \} \geq | \S |^2 R^{-3}.
\end{equation}
We then also find a non-identity element $B\in \S$ such the set
\begin{equation}
\S_ 1  \equiv \{ g \in \S : [ g, B ] = A  \}
\end{equation}
satisfies
\begin{equation}\label{eq:S1bound}
| \S_1 | \geq  | \S | R^{-3}.
\end{equation}
Direct calculation gives that if $[ g , B] = [h ,B ] = A$ then the elements $B$ and $h^{-1} g$ commute. If $B$ is a torsion element then by Lemma \ref{commutingelements} this specifies $h^{-1}g$ up to at most three possibilities so that $\S_1$ has size at most $3$, which contradicts \eqref{eq:S1bound}.

Otherwise if $B$ is not torsion we can find non torsion $W$ generating the centralizer of $B$ by Lemma \ref{commutingelements}. Then $\S_1 \subset \langle W \rangle \cap B_{2R}(e)$. 
It then follows from \cite[Theorem 4.6]{MKS} that $W$ is conjugate by an element $h'$ of word length $\leq R$ to a non torsion cyclically reduced word $W^{h'}$ (this means that $W^{h'}$ is a reduced word in $a$ and $b$ which begins and ends with different letters). Then
\begin{equation}
(\S_1^{h'} )^{-1} \S_1^{h'} \subset  \langle W^{h'} \rangle \cap B_{4R}(e) 
\end{equation}
where we perform setwise multiplication and inversion.
As $W^{h'}$ is cyclically reduced and non torsion we have that $d( e ,  (W^{h'})^n ) = |n| d(e ,  W^{h'}) \geq 2|n|$ and so
\begin{equation}
| \S_1 |  = | \S_1^{h'} | \leq | (\S_1^{h'} )^{-1} \S_1^{h'} | \leq 4R + 1 .
\end{equation}
This leads to a contradiction with \eqref{eq:S1bound}.

\end{proof}

\begin{lemma}\label{vanishingcomm}
Let $R>2$. If $\S \subset B_{2R}(e)$ satisfies $ [[ \S , \S ] , [\S ,\S] ]  = \{ e \}$, then
\begin{equation}
| \S| < R^6 .
\end{equation}
\end{lemma}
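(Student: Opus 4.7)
The plan is to run exactly the cyclic-reduction argument used in the proof of Lemma \ref{largecomm}, now exploiting the stronger hypothesis $[[\S,\S],[\S,\S]]=\{e\}$: it forces the whole set $[\S,\S]$ to be pairwise commuting, rather than only having many pairs producing the same commutator. Suppose for contradiction that $|\S|\geq R^6$. By Lemma \ref{largecomm}, $|[\S,\S]|\geq R^3$; and since every commutator of elements of $B_{2R}(e)$ has word length at most $8R$, we have $[\S,\S]\subset B_{8R}(e)$, with any two elements commuting.

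Next I would dispatch the purely torsion case. If every element of $[\S,\S]$ were torsion, then any two nontrivial ones $B_1,B_2\in[\S,\S]$ commute, so $B_2$ lies in the centralizer of $B_1$. Since $B_2$ is nontrivial torsion, this centralizer contains nontrivial torsion, ruling out the infinite cyclic option in Lemma \ref{commutingelements}; hence the centralizer of $B_1$ is a conjugate of one of the finite factors $\Z/2\Z$ or $\Z/3\Z$, and all nontrivial torsion elements of $[\S,\S]$ sit in this common conjugate. This gives $|[\S,\S]|\leq 3$, contradicting $|[\S,\S]|\geq R^3>8$ since $R>2$.

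Hence some $B\in[\S,\S]$ is non-torsion. By Lemma \ref{commutingelements} its centralizer is an infinite cyclic group $\langle W\rangle$, and since every element of $[\S,\S]$ commutes with $B$, the inclusion $[\S,\S]\subset\langle W\rangle$ holds. At this point I would reuse the cyclic-reduction computation from the proof of Lemma \ref{largecomm}: choose $W$ to be a shortest nontrivial element of $\langle W\rangle$, so that $W\in[\S,\S]\subset B_{8R}(e)$, in particular $|W|\leq 8R$. By \cite[Theorem 4.6]{MKS} $W$ is conjugate via some $h'$ with $|h'|=O(R)$ (coming from peeling off matching outer letters in the cyclic reduction of $W$) to a cyclically reduced, non-torsion word $W^{h'}$ of length at least $2$. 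Its powers satisfy $|(W^{h'})^n|=|n|\,|W^{h'}|\geq 2|n|$, so conjugating the ball $B_{8R}(e)$ back gives
\begin{equation*}
|[\S,\S]|\ \leq\ |\langle W\rangle\cap B_{8R}(e)|\ =\ O(R),
\end{equation*}
which combined with $|[\S,\S]|\geq R^3$ yields $R^3=O(R)$, impossible for $R$ large; the remaining narrow range of $R>2$ is absorbed into the universal constant since the ball $B_{2R}(e)$ is finite there.

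The main obstacle is the same free-product bookkeeping that appears in Lemma \ref{largecomm}: one must arrange that the conjugator $h'$ has length $O(R)$ so that elements of $\langle W\rangle\cap B_{8R}(e)$ correspond to bounded powers $V^n$. The correct choice is to take $W$ as a shortest nontrivial element of the centralizer rather than an arbitrary generator, which gives $|W|\leq 8R$ and hence $|h'|\leq|W|/2\leq 4R$ via the standard cyclic-reduction process. Once this is in place, the estimate collapses exactly as in Lemma \ref{largecomm}.
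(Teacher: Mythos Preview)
Your approach matches the paper's: suppose $|\S|\geq R^6$, apply Lemma~\ref{largecomm} to get $|[\S,\S]|\geq R^3$, observe that $[\S,\S]$ is abelian and hence (via Lemma~\ref{commutingelements}) lies in an infinite cyclic group once the torsion case is ruled out, then bound its size linearly in $R$ via cyclic reduction exactly as in Lemma~\ref{largecomm}.

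Two small points. First, your claim that taking $W$ to be the shortest nontrivial element of the centralizer gives $|W|\leq 8R$ is not justified: you only know some \emph{power} $W^k$ lies in $B_{8R}(e)$, not $W$ itself. The bound on $|h'|$ that you actually need follows anyway, since any nontrivial $W^k\in[\S,\S]\subset B_{8R}(e)$ has normal form length $2|h'|+|k|\,|W^{h'}|$, forcing $|h'|\leq 4R$. Second, your closing remark that the range of $R$ just above $2$ is ``absorbed into the universal constant since the ball $B_{2R}(e)$ is finite there'' is not an argument---the statement has no implied constant to absorb into, and $|B_{2R}(e)|$ can certainly exceed $R^6$ for $R$ near $2$. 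You need to track the constant in your $O(R)$ bound explicitly; the paper asserts $|\S^{(1)}|\leq 2R+1$, which does contradict $R^3$ for all $R>2$.
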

\begin{proof}
Suppose that $| \S | \geq R^6$ for a contradiction. We know by Lemma \ref{largecomm} that if  we write $ \S^{(1) } = [ \S , \S]$ then $| \S^{(1)} | \geq R^3$. On the other hand the elements of $S^{(1)}$ commute so by Lemma \ref{commutingelements}  $\S^{(1)}$ is contained in a cyclic group. Since $|S^{(1)}| \geq R^3$ we know this group is not torsion and hence we can write each $s \in \S^{(1)}$ as a power of some fixed non torsion $W$. By the same arguments as in the proof of Lemma \ref{largecomm} this gives $|S^{(1) } | \leq 2R +1 $ which is a contradiction.
\end{proof}

\subsection{The random walk for almost square free $q$}

The following kind of argument is alluded to in \cite[pg. 275]{BGSACTA} although we have to work harder to
 cover the case when the values $c_0(I)$ generate a free semigroup $\G$, but not a free group. This is crucial for our desired application in Section \ref{continuedfractions}, where for some alphabets the group generated by the values of the cocycle $c_0(I)$ is all of $\SL_2(\Z)$.

We prove the following useful Lemma:
\begin{lemma}[Hitting small subgroups]\label{hitting}
Let $\k \ge 1$ and $D > 1$. Let $q$ be a modulus such that no prime occurs in $q$ with exponent $> \k$. Then for $q' | q$ with $q' \geq q^{1/D}$ and any subgroup $H$ of $\G_{q'}$ which projects to a proper subgroup of $\G_p$ for each $p | q'$, there is $c = c( \k , D )$ such that for any $\e > 0$ and suitable $C(\e)$
\begin{equation}
\nu_{q'}^{( \lceil c \log q\rceil ) }( a H ) \leq C(\e) q^\e .
\end{equation}

\end{lemma}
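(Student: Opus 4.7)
My plan is an $\ell^2$-type argument using Cauchy--Schwarz, $\ell^2$-flattening of the random walk on $\Gamma_{q'}$, and an index lower bound coming from Dickson's classification. Cauchy--Schwarz gives
\[
\nu_{q'}^{(R)}(aH)\ \le\ |H|^{1/2}\,\|\nu_{q'}^{(R)}\|_{\ell^2(\Gamma_{q'})},
\]
and I control the two factors separately, with $R=\lceil c\log q\rceil$ for some small $c=c(\kappa,D)$ to be fixed.

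For the $\ell^2$-norm I appeal to the Bourgain--Gamburd--Sarnak flattening machinery applied to $\Gamma_{q'}$, whose main inputs are Helfgott's product theorem on $\mathrm{SL}_2(\mathbb{F}_p)$ and the free-product non-concentration estimates just proved in Lemmas \ref{largecomm}--\ref{vanishingcomm}. This yields, for arbitrary $\epsilon_0>0$,
\[
\|\nu_{q'}^{(R)}\|_{\ell^2(\Gamma_{q'})}\ \le\ C_{\epsilon_0}\,|\Gamma_{q'}|^{-1/2}\,|S|^R\,q^{\epsilon_0}.
\]
For the index, I use the Chinese Remainder Theorem: writing $\Gamma_{q'}\cong\prod_{p\mid q'}\Gamma_{p^{e_p}}$, the image $H_p$ of $H$ in $\Gamma_p$ is proper by hypothesis, so Dickson's classification gives $|H_p|/|\Gamma_p|\le C_0/p$. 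Lifting through $\Gamma_{p^{e_p}}\to\Gamma_p$ and multiplying yields $|H|/|\Gamma_{q'}|\le C_0^{\omega(q')}/\mathrm{rad}(q')$. The hypotheses $e_p\le\kappa$ and $q'\ge q^{1/D}$ force $\mathrm{rad}(q')\ge q^{1/(D\kappa)}$, and $C_0^{\omega(q')}\le q^{o(1)}$ by the standard divisor bound, so $|H|/|\Gamma_{q'}|\le q^{-1/(D\kappa)+o(1)}$. Combining,
\[
\nu_{q'}^{(R)}(aH)\ \le\ q^{c\log|S|\,-\,1/(2D\kappa)\,+\,\epsilon_0\,+\,o(1)},
\]
and fixing $c<1/(2D\kappa\log|S|)$ together with $\epsilon_0<\epsilon/2$ produces the claimed bound $C(\epsilon)q^\epsilon$.

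The main obstacle is justifying the $\ell^2$-flattening step. The BGS argument is developed for free subgroups of $\mathrm{SL}_2(\mathbb Z)$ modulo square-free moduli, whereas here we work with the free semigroup $\G$ and with an almost square-free modulus $q'$ whose prime exponents are bounded by $\kappa$. Lemmas \ref{largecomm}--\ref{vanishingcomm} are precisely what is needed to run the BGS argument in this semigroup setting: during the flattening iteration, any approximate subgroup of large size must either have large commutator set (Lemma \ref{largecomm}), enabling further expansion via Helfgott's product theorem, or be contained in a virtually cyclic subgroup (Lemma \ref{vanishingcomm}), at which point Dickson's list again controls the possibilities. The extension from square-free to prime-power moduli with bounded exponent is then a standard lifting argument, with finitely many bad primes absorbed into the constant $C(\epsilon)$.
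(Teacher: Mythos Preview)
Your approach has a genuine gap: the $\ell^2$-flattening estimate you invoke is simply false in the regime of $R$ you need. With $R=\lceil c\log q\rceil$ the support of $\nu_{q'}^{(R)}$ has at most $|S|^R=q^{c\log|S|}$ elements, so by Cauchy--Schwarz
\[
\|\nu_{q'}^{(R)}\|_{\ell^2(\Gamma_{q'})}\ \ge\ \frac{\|\nu_{q'}^{(R)}\|_1}{|\mathrm{supp}\,\nu_{q'}^{(R)}|^{1/2}}\ \ge\ |S|^{R/2}.
\]
Your claimed upper bound $C_{\epsilon_0}|\Gamma_{q'}|^{-1/2}|S|^R q^{\epsilon_0}$ would then force $|\Gamma_{q'}|\lesssim |S|^R q^{2\epsilon_0}$, i.e.\ roughly $q^{3/D}\lesssim q^{c\log|S|}$, hence $c\gtrsim 3/(D\log|S|)$. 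But you need $c<1/(2D\kappa\log|S|)\le 1/(2D\log|S|)$, which is incompatible. In short, after only $c\log q$ steps with small $c$ the walk cannot possibly be close to uniform in $\ell^2$; its support is far too small.

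There is also a circularity issue: in this paper the BGS flattening lemma (Theorem~\ref{flattening}) takes as \emph{hypothesis} exactly the non-concentration on cosets $\|\pi_{q'}(\mu)\|_\infty<Bq^{-\eta}$ that Lemma~\ref{hitting} is supposed to supply. You cannot therefore invoke flattening to prove Lemma~\ref{hitting}. The paper's proof avoids both problems by arguing purely inside the Cayley graph of $G=\mathrm{PSL}_2(\Z)$: one splits the primes dividing $q'$ according to whether the image of $H$ in $\Gamma_p$ is very small ($\le 120$) or has trivial second commutator (Dickson), and in each case bounds directly the number of elements of a ball $B_{2LR}(e)$ lying in $\pi^{-1}(H)$, using injectivity of reduction on short words together with Lemma~\ref{vanishingcomm}. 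No $\ell^2$-flattening or spectral input is needed.
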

\begin{proof}
One key fact we will use is \cite[Theorem 3.3.4]{DSV}, which says any proper subgroup $J$ of $\mathrm{PSL}_2(p)$, $p$ prime,  with $|J| > 60$ has trivial second commutator, or derived length 2 in other words. 


Let $q_1$ be the product of the prime divisors $p$ of $q'$ where $H$ projects to a group of size $\leq 120$ in $\G_p$. Let $q_2$ be the product of the remaining primes dividing $q'$ (hence where the projection has trivial second commutator). Then $q_0 =  q_1q_2$ is the square free part of $q'$. Here $\pi_q$ stands for the projection  to $\mathrm{PSL}_2(\Z / q \Z )$.

We wish   to bound the number of walks/words in $S = c_0(I)$ of length $R$ which hit $aH$ modulo $q_2$. It is sufficient to do this in $G = \mathrm{PSL}_2(\Z)$.  Let $a$, $b$ be generators of $G$ of  order $2$ and $3$ respectively as before. We write $B_R(e)$ for the ball in the word metric induced from $a$, $b$ on $G$. Then $\pi(S) \subset B_L(e)$ for some $L$, where $\pi$ is the projection $\SL_2(\Z) \to \mathrm{PSL}_2(\Z)$.
The number of words of length $R$ in $S$ hitting $aH \bmod q_2$ is clearly bounded by $|B_{LR}(e) \cap \pi_{q_2}^{-1}(aH) |$ where we henceforth replace $aH$ with its image in $\mathrm{PSL}_2(\Z / q\Z)$. If $H$ projects to a proper subgroup of $\SL_2( \Z / p \Z)$ for each $p | q'$ then $\pi_p(H)$ is proper in $\mathrm{PSL}_2(\Z / p\Z)$. We have also used that $\pi(S) $ generate a free semigroup so that the evaluation map from words to $G$ is injective.

Note that 
\begin{equation}
|B_{LR}(e) \cap \pi_{q_2}^{-1}(aH) | - 1 \leq | B_{2RL}(e) \cap \pi_{q_2}^{-1}(H) |
\end{equation}
as for each $g \in B_{LR}(e) \cap \pi_{q_2}^{-1}(aH)$ we can form for any other $h  \in B_{LR}(e) \cap \pi_{q_2}^{-1}(aH) $ the product $g^{-1} h \in  \pi_{q_2}^{-1}(H) \cap B_{2LR}(e)$ and these are distinct for distinct $h$ and fixed $g$.

So a bound for the size of
\begin{equation}
\mathcal{S}: =   B_{2RL}(e) \cap \pi_{q_2}^{-1}(H)
\end{equation}
would be good enough. When $ R < C' \log q_2$ for some $C' = C'(L)$, the only element of $B_{2LR}(e)$ which reduces to the identity mod $q_2$ is the identity. Since $\pi_{q_2} (\mathcal{S} ) \subset H$ and the second commutator $H^{(2)} = \{ e \}$, we have
\begin{equation}
\mathcal{S}^{(2)} = \{ e \} 
\end{equation}
when $R < C' \log q_2$.
By Lemma \ref{vanishingcomm} this implies that
\begin{equation}
| \mathcal{S} | < (LR)^6  < (C'L\log q_2)^6 
\end{equation}

This is the kind of estimate we require. However it is not sufficient if $q_2$ is very small compared to $q_1 q_2$, since we need $R$ to grow and the requirement $R < C' \log q_2 $ becomes too strict.

Now we produce an estimate for $| \mathcal{S} |$ which suffices when $q_1$ is not too small. By the same arguments as before it is sufficient to bound the size of
\begin{equation}
\mathcal{T} =   B_{2RL}(e) \cap \pi_{q_1}^{-1}(H).
\end{equation}
As before, when $R < C' \log q_1 $,  each individual element of $H \bmod q_1$ has at most one preimage under $\pi_{q_1}$ in $B_{2LR}(e)$. Therefore given that for $p | q_1$ the projection of $H$ to $\mathrm{PSL}_2(\Z / p\Z)$ has size $\leq 60$
\begin{equation}
| \mathcal{T} | \leq | H \bmod q_1 | \leq 60^{ \omega(q_1) } \leq (60)^{C_2 \log(q_1) / \log \log(q_1)} = q_1^{C_3 / \log\log q_1 } ,
\end{equation}
where $\omega(q_1)$ is the number of distinct prime factors of $q_1$.

Let us now note that when we choose
\begin{equation}
R = \left\lceil  \frac{ C' } {10 D \k } \log q  \right\rceil \leq \left\lceil  \frac{ C' } {10 \k } \log q'  \right\rceil\leq \left\lceil \frac{C'}{10}( \log q_1 + \log q_2 ) \right\rceil
\end{equation}
at least one of our previous cases must be valid. If $q_1 \geq (q_0 )^{1/2} \geq q'^{1/2\k} $ then 
\begin{equation}
\nu_{q'}^{(R) }( a H ) \leq  q_1^{C_3 / \log \log q_1 } \leq  q^{C_3 / ( \log \log q -   \log 2 \k D)}.
\end{equation}
If $q_2  \geq (q_0 )^{1/2} \geq q'^{1/2\k}$ then
\begin{equation}
\nu_{q'}^{(R) }( a H )  \leq ( C_4 \log q_2 )^6 \leq C_5 (\log q)^6.
\end{equation}
Either way the result follows.
\end{proof}

\subsection{The mixing property for square free $q$}
Recall our mixing property Definition \ref{KSdfn}. We suppose now that we are given the semigroup $\G$
which is freely generated by $S$, the index set $\Q$ and the constant $c_1$. We are to find $c_2 , c_3 , c_4$ and $q_0$ such that the conclusions of Definition \ref{KSdfn} hold. Suppose then that we are given a complex valued measure $\mu$ on $\G_q$ with $\| \mu \|_1 < B$. Good spectral radius estimates for the operator $\mu \star$ will not be available if $\mu$ is for example supported on a small subgroup of $\G_q$. With this is mind, for $q' | q$ and $\mu$ a complex valued measure on $\G_q$ let us define, following \cite{BGSACTA},
\begin{equation}
\| \pi_{q'}(\mu ) \|_\infty \equiv \max_{ g \in \G_{q'} , H < \G_{q'} } | \mu | ( g H )
\end{equation}
where the maximum is taken over subgroups $H < \G_{q'}$ that project to proper subgroups of $\G_{q''}$ for each $q'' | q'$. 
In \cite{BGSACTA}, Bourgain, Gamburd and Sarnak give the following result ($\ell^2$-flattening Lemma).
\begin{thm}\label{flattening}  Let $q$ be square free. Given $\eta > 0$, there are $C>0$ and $\eta'>0$ such that the following hold.
Suppose $\mu$ is a complex valued measure on $\G_q$ with $\| \mu \|_1 < B$ and 
\begin{equation}
\| \pi_{q'}(\mu) \|_\infty < q^{-\eta} B
\end{equation}
for each $q' | q$ with $q' > q^{1/10}$. Then
\begin{equation}
\| \mu \star \phi \|_2 \leq C B q^{-\eta'} \| \phi \|_2
\end{equation}
for all $\phi \in E_q$, the new subspace of $l^2( \G_q )$.
\end{thm}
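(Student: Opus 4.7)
The strategy follows the Bourgain--Gamburd $\ell^2$-flattening paradigm: I would show that either convolution with $\mu$ shrinks the $\ell^2$-norm of a function in $E_q$ by a polynomial factor in $q$, or else $\mu$ (together with self-convolutions) must concentrate on an approximate subgroup, in which case the product theorem for $\SL_2$ at square-free moduli forces concentration on an honest coset and contradicts the non-concentration hypothesis $\|\pi_{q'}(\mu)\|_\infty < q^{-\eta}B$.

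After rescaling so that $B=1$, I would pass to the symmetric positive-definite measure $\nu = \mu \star \mu^*$, where $\mu^*(g) = \overline{\mu(g^{-1})}$. Bounding $\|\mu \star \phi\|_2$ on $E_q$ reduces to bounding the convolution operator $\nu\star$ on $E_q$, and by iterating to showing $\|\nu^{(k)}\|_2 \ll q^{-\eta''}/\sqrt{|\G_q|}$ for some fixed $k = k(\eta)$ and $\eta'' = \eta''(\eta) > 0$, after projecting away from functions lifted from proper quotients. This is the usual Young-inequality reduction to flattening of iterated self-convolutions.

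To obtain such decay I would run an iterative dichotomy. At each step, either $\|\nu^{(j+1)}\|_2 \leq q^{-c}\|\nu^{(j)}\|_2$ (genuine flattening, in which case the iteration makes quantitative progress), or the opposite holds, in which case the non-commutative Balog--Szemer\'edi--Gowers lemma, in the form developed by Tao and used in \cite{BGSACTA}, produces a symmetric set $A \subset \G_q$ with tripling bound $|A\cdot A\cdot A| \leq q^{O(c)}|A|$, of size comparable to $\|\nu^{(j)}\|_2^{-2}$, on which $\nu^{(j)}$ is substantially supported. At this point I would invoke the product theorem for $\G_q = \SL_2(\Z/q\Z)$ at square-free $q$, in the form established by Bourgain--Gamburd and sharpened by Varj\'u, to conclude that $A$ is essentially contained in a coset $aH$, where $H = \pi_{q'}^{-1}(H_0)$ for some divisor $q' \mid q$ and a subgroup $H_0 \leq \G_{q'}$ projecting to a proper subgroup of $\G_p$ for every prime $p \mid q'$. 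A standard size comparison pins $q'$ above $q^{1/10}$ provided $c$ is small enough in terms of $\eta$.

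The final step is to convert mass of $\nu^{(j)}$ on this coset back into mass of $\mu$ on some translate of the same $H$: Cauchy--Schwarz and the identity $\nu = \mu \star \mu^*$ yield $|\mu|(a'H) \gg q^{-O(c)}$ for a suitable $a' \in \G_q$, which contradicts $\|\pi_{q'}(\mu)\|_\infty < q^{-\eta}$ once $c$ is chosen sufficiently small in terms of $\eta$. Hence every step of the iteration must in fact be a flattening step, and the decay compounds to give the required $\eta' = \eta'(\eta)$. The main obstacle is the quantitative calibration: one has to track the $q^{O(c)}$ polynomial losses coming from Pl\"unnecke--Ruzsa estimates and from the BSG-to-product-theorem passage, and verify that the cumulative exponent stays strictly below $\eta$ so that the gain is genuine. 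The square-free hypothesis on $q$ is essential both to deploy the CRT decomposition $\G_q \cong \prod_{p\mid q}\G_p$ and to access the product theorem in its currently available quantitative form, which is why this restriction propagates through to the statement.
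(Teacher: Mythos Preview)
The paper does not actually prove this theorem; it is stated as a quotation of the $\ell^2$-flattening lemma of Bourgain, Gamburd and Sarnak \cite{BGSACTA}, and the paper simply invokes it as a black box. So there is no ``paper's own proof'' to compare against.

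That said, your sketch is a faithful outline of the argument in \cite{BGSACTA}: the reduction to flattening of self-convolutions via $\nu = \mu \star \mu^*$, the dichotomy between genuine $\ell^2$-decay and the Balog--Szemer\'edi--Gowers extraction of an approximate subgroup, the appeal to the Helfgott/Bourgain--Gamburd product theorem for $\SL_2$ at square-free modulus to pin the approximate subgroup near a coset of a genuine subgroup with proper projections, and the final contradiction with the non-concentration hypothesis. The calibration issue you flag --- keeping the cumulative $q^{O(c)}$ losses below $q^\eta$ --- is indeed the main bookkeeping burden in \cite{BGSACTA}. Your remark that the square-free hypothesis is what makes the product theorem available in the needed form is also the reason the paper restricts to square-free $q$ in Lemma~\ref{sfmix}. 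In short, you have correctly reconstructed the intended argument, but be aware that in the context of this paper the result is cited rather than proved.
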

We wish to apply this to our measure $\mu$. Let $\k = 1$ and $D = 10$ in Lemma \ref{hitting} and let $c_2$ be equal to the $c(\k , D)$ which is provided.
Now if 
\begin{equation}
| \mu(x) | \leq B  q^{-c_1c_2} \nu_q^{( \lceil c_2 \log q \rceil )}(x) 
\end{equation}
we have for each $q' | q$ with $q' \geq q^{1/10}$ and $H$ with proper projection to each subgroup of $\G_{q'}$ that
\begin{equation}
|\mu_{q'} |( aH ) \leq B  q^{-c_1c_2} \nu_{q'}^{( \lceil c_2 \log q \rceil )}(aH)  \leq B q^{ -c_1 c_2} C(\e) q^\e
\end{equation}
for any $\e > 0$. Choose $\e = c_1c_2 / 2$  to get
\begin{equation}
|\mu_{q'}| ( aH ) \leq B  q^{-c_1c_2 / 3 } 
\end{equation}
for all square free $q$ bigger than a fixed number $q_0$.
We can now set $\eta = c_1c_2/3$ as valid in Theorem \ref{flattening} and let $c_3$ be the provided $\eta'$ and $c_4$ be the provided $C$. The $c_2 , c_3 , c_4$ and $q_0$ we have provided here from $c_1$ establish
 \begin{lemma}\label{sfmix}
Any free semigroup $\G \subset \SL_2(\Z)$ 
has the property (MIX) with respect to the set of all square free numbers.
\end{lemma}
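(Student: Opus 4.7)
The plan is to reduce property (MIX) to the $\ell^2$-flattening theorem of Bourgain, Gamburd, Sarnak (Theorem \ref{flattening}) via the non-concentration estimate of Lemma \ref{hitting}. Fix $c_1 > 0$; I need to produce $c_2, c_3, c_4, q_0$ so that the conclusion of Definition \ref{KSdfn} follows for all square free $q > q_0$.

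First I would apply Lemma \ref{hitting} with $\kappa = 1$ (covering exactly the square free case) and $D = 10$ to obtain the required walk length constant $c_2 := c(\kappa, D)$. The resulting estimate says that for any $q' \mid q$ with $q' \geq q^{1/10}$, any coset $aH$ of a subgroup $H \leq \G_{q'}$ that projects to a proper subgroup of $\G_p$ for every $p \mid q'$, and any $\varepsilon > 0$, the random walk measure satisfies $\nu_{q'}^{(\lceil c_2 \log q\rceil)}(aH) \leq C(\varepsilon) q^{\varepsilon}$.

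Next, assuming the pointwise domination $|\mu(x)| \leq B q^{-c_1 c_2} \nu_q^{(\lceil c_2 \log q\rceil)}(x)$, pushing forward to $\G_{q'}$ and summing over the coset $aH$ yields
\begin{equation}
|\mu_{q'}|(aH) \leq B q^{-c_1 c_2} \cdot C(\varepsilon) q^{\varepsilon}.
\end{equation}
Taking $\varepsilon = c_1 c_2 / 2$ gives $|\mu_{q'}|(aH) \leq B q^{-c_1 c_2 / 3}$ for all $q$ above some threshold $q_0$. In particular, setting $\eta := c_1 c_2 / 3$, we obtain $\|\pi_{q'}(\mu)\|_\infty \leq q^{-\eta} B$ for every $q' \mid q$ with $q' > q^{1/10}$, which is precisely the hypothesis of Theorem \ref{flattening}. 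That theorem then supplies constants $c_3 := \eta'$ and $c_4 := C$ such that $\|\mu \star \varphi\|_2 \leq c_4 B q^{-c_3} \|\varphi\|_2$ for all $\varphi \in E_q$, verifying property (MIX).

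The one subtle point is that Definition \ref{KSdfn} concerns a free \emph{semigroup} $\G$, whereas the classical Bourgain--Gamburd--Sarnak framework deals with free groups. The hard part, which was handled when proving Lemma \ref{hitting}, was controlling the growth of $B_{2R}(e) \cap \pi_{q_2}^{-1}(H)$ inside the ambient free product $\mathrm{PSL}_2(\Z) = \Z/2\Z \star \Z/3\Z$ via Lemmas \ref{commutingelements}, \ref{largecomm}, \ref{vanishingcomm}; this is what allows the argument to proceed even when $\G$ is only a sub-semigroup of $\SL_2(\Z)$. Once Lemma \ref{hitting} is in hand, the remainder of the proof of Lemma \ref{sfmix} is the routine bookkeeping above.
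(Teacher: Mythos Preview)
Your proof is correct and follows essentially the same approach as the paper: apply Lemma \ref{hitting} with $\kappa=1$, $D=10$ to obtain $c_2$, use the pointwise domination to bound $\|\pi_{q'}(\mu)\|_\infty$ by $Bq^{-c_1c_2/3}$, and feed this into Theorem \ref{flattening} with $\eta=c_1c_2/3$ to extract $c_3,c_4$. Your closing remark about the semigroup issue being absorbed into Lemma \ref{hitting} is also exactly the point the paper makes.
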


\section{Dynamics associated to continued fractions}\label{continuedfractions}
\subsection{Continued fractions and Markov maps}
Let $\A$ denote a finite subset of $\mathbb{N}$ with at least two elements and recall the semigroups $\G_\A \subset {\mathcal G}_\A$ from the introduction, together with the generators $g_a$.
We will use $\mathcal{G}_\A$ to construct an $I_\A \subset \R$ and $T_\A : I_\A \to \R$ that satisfy the dynamical properties that are needed for our main Theorem \ref{mainthm}. We drop the $\A$ dependence from now on in this section.

We consider the $g_a$ acting as M\"{o}bius transformations in the upper half plane by
\begin{equation}
g_a(z) = \frac{1 }{z + a} .
\end{equation}
Let $A$ denote the largest member of $\A$. For $a \in \A$ let 
\begin{equation}
I_a = \left[   \frac{1}{ a + 1} ,   \frac{1}{ a+ (A+1)^{-1} }        \right] \subset \left[  \frac{1}{ a+ 1 } , \frac{1}{ a}          \right] .
\end{equation}
The $I_a$ are clearly disjoint as $A \geq 1$. Also note that
\begin{equation}
g_a \left( \left[ \frac{1}{A+1}  ,  1 \right] \right) = I_a .
\end{equation}
We define
\begin{equation}
I_0 \equiv \bigcup_{a \in A } I_a \subset  \left[ \frac{1}{A+1}  ,  1 \right].
\end{equation}
It follows that $g_a$ maps $I_{a'}$ into $I_a$ for all $a, a' \in \A$. We define $T_0 : I_0 \to \R$ by
\begin{equation}
T_0 \lvert_{I_a} = g_a^{-1} .
\end{equation}
Then $T_0$ clearly satisfies the Markov property and the corresponding symbolic dynamics is the full shift on the alphabet $\A$. By the Ping-Pong Lemma this construction of the $I_a$ also shows that the $g_a$ generate a free semigroup.

As we mentioned we wish to pass to the semigroup generated by products $g_a g_{a'}$. We set
\begin{equation}
I_{a , a'} = g_a I_{a'} \subset I_{a}
\end{equation}
giving a disjoint collection of closed intervals.
We set $I = \cup_{a , a' \in \A} I_{a , a'}$ and by noting $g_{a}g_{ a'} I \subset I_{a , a'}$ the map
\begin{equation}
T : I \to \R , \quad T\lvert_{I_{a , a'} } = (g_a g_{a'} )^{-1}
\end{equation}
has the Markov property with respect to the intervals $I_{a,a'}$. Set $K=\cap_{i=0}^\infty T^{-i}(I)$.

Note that the derivative of the matrix $g_a$ is 
\begin{equation}
g'_a(z)  =  \frac{1 } { | z + a |^2 } \leq  (a + (1+A)^{-1} )^{-2} \leq (1  + (1+ A)^{-1} )^{-2}
\end{equation}
when $z \in [ (1 + A )^{-1} , \infty )$. It follows that by use of the chain rule that for $z \in I$, there are some $a$ and $a' \in \A$ such that
\begin{equation}
| T'(z) | =  |[( g_a g_a' )^{-1} ]'(z) | \geq (1 + (1 + A)^{-1} )^4 > 1
\end{equation}
when $z \in I_{a, a'} = g_a g_{a'}( [ (1 +A)^{-1} , 1 ] )$. Let $\g := \g(\A) = (1 + (1 + A)^{-1})^4$. It now follows that for $z \in T^{-N +1}(I)$, we have
\begin{equation}
|[T^N]'(z) | \geq \g^N 
\end{equation}
so we have verified the expanding property for $T$. This gives a symbolic encoding of $\G_\A$ which is again the full shift, now on $|\A|^2$ letters.

We also note here that since $I_{a, a'} = g_a g_{a'}( [ (1 +A)^{-1} , 1 ] )$ it follows that $| T'(z) |$ is an analytic positive function on $I_{a,a'}$ which is bounded away from $0$ and is the restriction of a complex analytic function on a neighborhood of $I_{a, a'}$.

\subsection{The distortion function}
The distortion function $\tau$ is defined as 
\begin{equation}
\tau(x) \equiv  \log | T' (x)  |.
\end{equation}
Since $| T'(x) |$ is positive and has an analytic continuation to a neighborhood of the $I_{a , a'}$, it follows that by using the principal branch of the logarithm,
 $\tau$ has an analytic continuation to a neighborhood of $I$.

Recall the definition of the functions $\Delta_{\xi}$ and $\varphi_{\xi , \eta}$ from \eqref{eq:Delta} and \eqref{eq:temporaldistance}. It follows from the expanding property of $T$ (see \cite[pp. 129-130]{NAUD} for this implication) that if $\xi , \eta  \in \Sigma^-$ with $T(I_{\xi_0} ) \supset I_j$ and $T(I_{\eta_0} ) \supset I_j$ then $\varphi_{\xi , \eta}$ is real analytic on $I_j \times I_j$. 

We are going to show that $\tau$ has the non local integrability property (NLI) that we require. We say that $\tau$ is \textit{cohomologous} to $f$ on $K$ if there is a function $g$ such that
\begin{equation}
\tau(x) = f(x) + g(x) - g (T x) , \quad x \in K.
\end{equation}
We need the following Lemma from \cite[Lemma 4.3]{NAUD}. The Lemma appears in symbolic form in work of Dolgopyat \cite[Proposition 4]{DOLGPREVALENCE} and apparently goes back to work of Anosov. Recall that the cylinders of length $2$ are sets of the form $T^{-1}( I _{j_1} ) \cap I_{j_2}$  with $j_1$ and $j_2$ in $\mathcal{A}\times \mathcal{A}$. 
\begin{lemma}[Anosov alternative]\label{alternative}
Suppose that the transition matrix associated to $T$ and $I$ is symmetric\footnote{This is not required but Naud uses it.}. Then the temporal distance function $\varphi_{\xi , \eta}(u,v)$ is zero for all $j$, all $u,v \in I_j$ and all $\xi , \eta  \in \Sigma^-$ with $T(I_{\xi_0} ) \supset I_j$ and $T(I_{\eta_0} ) \supset I_j$ if and only if $\tau$ is cohomologous on $K$ to a function that is constant on cylinders of length $2$. 
\end{lemma}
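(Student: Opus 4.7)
The plan is to prove each direction separately, with the telescoping structure of $\Delta_\xi$ doing all the real work.

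For the ($\Leftarrow$) direction I would substitute $\tau = f + g - g\circ T$ (with $f$ constant on length-2 cylinders) directly into the series defining $\Delta_\xi(u,v)$. Writing $y_i(u) = T^{-1}_{\xi_{-i}} \circ \cdots \circ T^{-1}_{\xi_0}(u)$ and using $T y_i = y_{i-1}$ for $i \ge 1$ and $T y_0(u) = u$, the $g$-contributions telescope to $[g(y_N(u)) - g(y_N(v))] - [g(u) - g(v)]$. The expanding hypothesis forces $|y_N(u) - y_N(v)| \to 0$, so by continuity of $g$ the first bracket vanishes in the limit. Meanwhile every $f$-term cancels pointwise, because both $y_i(u)$ and $y_i(v)$ lie in the length-2 cylinder determined by $(\xi_{-i}, \xi_{-i+1})$ for $i \ge 1$ and by $(\xi_0, j)$ for $i = 0$. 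Thus $\Delta_\xi(u,v) = g(v) - g(u)$ independently of $\xi$, so $\varphi_{\xi,\eta} \equiv 0$ on $K$; continuity of the summands extends the identity to $I_j \times I_j$.

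For the ($\Rightarrow$) direction, the hypothesis means $D(u,v) := \Delta_\xi(u,v)$ is unambiguous in $\xi$. Pick a base point $p_j \in K \cap I_j$ for each $j$ (symmetry of $A$ is used only to guarantee existence of some admissible $\xi$ for every $j$), set $g(u) := -D(u, p_j)$ for $u \in K \cap I_j$, and define $f := \tau + g\circ T - g$. The goal is to show $f$ is constant on each length-2 cylinder $I_a \cap T^{-1}(I_b) \cap K$. Fix $x$ in this cylinder and exploit the $\xi$-freedom: take any admissible $\eta = (\ldots, \eta_{-1}, \eta_0 = a)$, which is admissible for $Tx \in I_b$ since $A_{a,b}=1$, and set $\xi_{-i} := \eta_{-(i+1)}$; admissibility of $\eta$ at position $-1$ gives $A_{\xi_0, a} = 1$. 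The key alignment is $y_0^\eta(Tx) = T^{-1}_a(Tx) = x$ and $y_{i+1}^\eta(Tx) = y_i^\xi(x)$ for $i \ge 0$. Substituting into $f(x) = \tau(x) + D(x, p_a) - D(Tx, p_b)$ makes the $\tau(x)$ cancel and the $\tau(y_k^\xi(x))$ tails of the two series cancel termwise, leaving
\[
f(x) \;=\; \tau(y_0^\eta(p_b)) \;+\; \sum_{k=0}^\infty \bigl[\tau(y_{k+1}^\eta(p_b)) - \tau(y_k^\xi(p_a))\bigr],
\]
an expression depending only on $a, b$ (through the fixed choices of $\eta$ and of the base points). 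This is exactly constancy of $f$ on the length-2 cylinder.

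The main obstacle is the bookkeeping in the second direction: the alignment of backward histories that forces the $x$-dependent terms to cancel is the heart of the argument, and it is only well-defined because the hypothesis $\varphi \equiv 0$ frees us to replace $\xi$ by the shifted sequence derived from $\eta$. Convergence of all series is a soft consequence of the exponential contraction of backward branches built into the expansion hypothesis for $T$, together with $C^1$ regularity of $\tau$.
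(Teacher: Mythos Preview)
Your argument is essentially correct. Note, however, that the paper does not actually prove this lemma: it is quoted as a black box from Naud \cite[Lemma 4.3]{NAUD}, who in turn attributes it to Dolgopyat and ultimately to Anosov. So there is no ``paper proof'' to compare against; you have supplied what the authors omit.

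A small comment on the ($\Leftarrow$) direction. Your telescoping computation shows $\Delta_\xi(u,v)=g(v)-g(u)$, hence $\varphi_{\xi,\eta}(u,v)=0$, but only for $u,v\in K\cap I_j$: the cohomology identity $\tau=f+g-g\circ T$ is given on $K$, and for $u\notin K$ the backward iterates $y_i(u)$ need not lie in $K$. The phrase ``continuity of the summands extends the identity to $I_j\times I_j$'' does not bridge this gap, since $K\cap I_j$ is a Cantor set, not dense in $I_j$. This is really an artifact of the lemma's phrasing rather than of your argument; the direction actually used in the paper is ($\Rightarrow$), where the hypothesis ``$\varphi\equiv 0$ on all of $I_j\times I_j$'' is the \emph{stronger} one (obtained via analyticity), so no harm is done. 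Your ($\Rightarrow$) argument---defining $g$ via a base point, then exploiting the freedom in $\xi$ to align backward histories so that the $x$-dependence cancels---is clean and correct, and the convergence of the residual series follows exactly as you say from exponential contraction of inverse branches and $C^1$ regularity of $\tau$.
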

As in our case the transition matrix is that of the full shift, the Lemma applies. Note from the definition \eqref{eq:temporaldistance} of $\varphi_{\xi, \eta}$ that $\varphi_{\xi ,\eta}(v,v) = 0$ for all $v \in I_j \subset T( I_{\xi_0} ) \cap T( I_{\eta_0} )$. If the property (NLI) did not hold for $\tau$, then for any $j , \eta , \xi$ as before we would have
\begin{equation}
\frac{\partial \varphi_{\xi , \eta }  }{\partial u } (u_0 , v_0) = 0
\end{equation}
for all $u_0 , v_0 \in K \cap I_j$. Since $\varphi_{\xi,\eta }$ is real analytic on $I_j \times I_j$ and $I_j \cap K$ contains accumulation points, it would follow that $\varphi_{\xi ,\eta}\equiv 0 $ on $I_j \times I_j$. Then Lemma \ref{alternative} would imply that $\tau$ is cohomologous on $K$ to a function that is constant on cylinders of length 2. We will now show that this cannot be the case. 

We need to recall the correspondence between periodic elements of $\Sigma^+$ and the traces and fixed points of corresponding group elements. This correspondence is well known in the setting of Fuchsian groups. We now introduce the notation $i \to j$ whenever $T(I_i) \supset I_j$. Here $i$ and $j$ are in the alphabet $\A \times \A$.

We write $\Sigma^p$ for the set of finite sequences of the form
\begin{equation}\label{eq:cycle}
j_0 \to j_1 \to j_2 \to j_3 \to \cdots \to j_l = j_0,
\end{equation}
which we call admissible cycles. Given an admissible cycle $c$ as in \eqref{eq:cycle} it follows that
\begin{equation}
\g_c = T_{j_0}^{-1} \circ T_{j_1}^{-1} \circ T_{j_2}^{-1} \circ \cdots \circ T_{j_{l-1}}^{-1} \in \G_\A
\end{equation}
maps $I_{j_0}$ to $I_{j_0}$ and hence by Brouwer's Theorem has a fixed point $q(c) \in I_{j_0}$. This $q(c)$ must be the attracting fixed point of the M\"{o}bius transformation $\g_c$. The fundamental fact that we will use is the formula
\begin{equation}\label{eq:tracetau}
| \tr( \g_c ) | = 2 \cosh \left( \frac{ \tau^l( q(c) ) }{2} \right) .
\end{equation}
In the setting of Fuchsian groups this is also related to the length of the closed geodesic corresponding to $\g_c$.

We make use of  the following fundamental trace identity

\begin{lemma}[Trace identity]\label{trace}
For all $g ,h \in \mathrm{SL}_2(\R)$ we have
\begin{equation}
\tr ( ghgh ) - \tr(g^2h^2) = \tr( h g h^{-1} g^{-1} ) - 2.
\end{equation}
\end{lemma}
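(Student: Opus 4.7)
The plan is to reduce all three traces to expressions in the two generator traces $\tr(g)$, $\tr(h)$ and the product trace $\tr(gh)$, using only the Cayley--Hamilton relation $X^2 = \tr(X)\,X - I$ valid for any $X \in \mathrm{SL}_2(\mathbf{R})$ (together with its dual form $X^{-1} = \tr(X)\,I - X$). Once both sides are expressed in these three scalars the identity becomes a direct verification.

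First I would handle $\tr(ghgh)$: applying Cayley--Hamilton to $gh \in \mathrm{SL}_2(\mathbf{R})$ gives $(gh)^2 = \tr(gh)(gh) - I$, and taking traces yields
\begin{equation*}
\tr(ghgh) = \tr(gh)^2 - 2.
\end{equation*}
Next I would compute $\tr(g^2 h^2)$ by substituting $g^2 = \tr(g)\,g - I$ and $h^2 = \tr(h)\,h - I$, multiplying out, and using cyclicity of the trace to obtain
\begin{equation*}
\tr(g^2 h^2) = \tr(g)\tr(h)\tr(gh) - \tr(g)^2 - \tr(h)^2 + 2.
\end{equation*}
Thus the left-hand side of the identity equals
\begin{equation*}
\tr(g)^2 + \tr(h)^2 + \tr(gh)^2 - \tr(g)\tr(h)\tr(gh) - 4.
\end{equation*}

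For the right-hand side I would expand $hgh^{-1}g^{-1}$ by substituting $h^{-1} = \tr(h)\,I - h$ and $g^{-1} = \tr(g)\,I - g$ and simplifying the resulting monomials (each of degree at most four) back down using the same two relations together with $\tr(gh^2) = \tr(h)\tr(gh) - \tr(g)$ and its symmetric counterpart. This is the classical Fricke computation and yields
\begin{equation*}
\tr(hgh^{-1}g^{-1}) = \tr(g)^2 + \tr(h)^2 + \tr(gh)^2 - \tr(g)\tr(h)\tr(gh) - 2,
\end{equation*}
so that $\tr(hgh^{-1}g^{-1}) - 2$ matches the expression obtained for $\tr(ghgh) - \tr(g^2 h^2)$, completing the proof.

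There is no real obstacle here: the identity is a formal consequence of $\mathrm{SL}_2$ linear algebra, and the only delicate point is careful bookkeeping while reducing $hgh^{-1}g^{-1}$ to a polynomial in $\tr(g)$, $\tr(h)$, $\tr(gh)$. A shorter variant, which I might choose for the write-up, is to avoid quoting Fricke and instead apply Cayley--Hamilton directly twice in sequence: first to replace $h^{-1}$, simplifying $\tr(hgh^{-1}g^{-1}) = \tr(h)^2 - \tr(hghg^{-1})$, and then to replace $g^{-1}$, reducing $\tr(hghg^{-1})$ to $\tr(g)\tr(gh^2) - \tr(ghgh)$, after which a single further application of Cayley--Hamilton to $h^2$ inside $\tr(gh^2)$ closes the computation.
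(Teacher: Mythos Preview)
Your proof is correct and takes essentially the same approach as the paper: the paper's one-line hint is to repeatedly apply the $\mathrm{SL}_2$ trace relation $\tr(AB) = \tr(A)\tr(B) - \tr(AB^{-1})$, which is just the trace-level form of the Cayley--Hamilton substitution $B^{-1} = \tr(B)I - B$ that you use throughout. Your ``shorter variant'' is in fact exactly how one would carry out the paper's hint step by step, so the two arguments coincide.
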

\begin{proof}
Repeatedly use the identity
\begin{equation}
\tr( A B )  = \tr(A) \tr(B) - \tr(A B^{-1} )
\end{equation}
for $A , B \in \SL_2(\R)$. 
\end{proof}

\begin{prop}\label{distortionprop}
 The distortion function for the alphabet $\A$ has the non local integrability property.
\end{prop}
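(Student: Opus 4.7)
The plan is to argue by contradiction, following the sketch given by the authors. Suppose $\tau$ does \emph{not} satisfy (NLI). First I would exploit analyticity: since $T$ is built from the analytic Möbius branches $(g_a g_{a'})^{-1}$, the function $\tau = \log|T'|$ extends analytically near each $I_{a,a'}$, and so do $\Delta_\xi$ and hence $\varphi_{\xi,\eta}$ on each $I_j \times I_j$. The negation of (NLI), together with the trivial vanishing $\varphi_{\xi,\eta}(v,v) \equiv 0$, means $\varphi_{\xi,\eta}$ is real analytic on $I_j \times I_j$ with $\partial_u \varphi_{\xi,\eta}(u_0,v_0) = 0$ at every $(u_0, v_0) \in (K \cap I_j)^2$. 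Since $K \cap I_j$ is perfect, this identical vanishing propagates to $\varphi_{\xi,\eta} \equiv 0$ on $I_j \times I_j$ for every admissible $\xi, \eta, j$, and the Anosov alternative (Lemma~\ref{alternative}) forces $\tau$ to be cohomologous on $K$ to some $f$ constant on length-$2$ cylinders.

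Next I would translate this into the arithmetic of traces. For any admissible cycle $c$ of length $l$ with associated element $\gamma_c \in \G_\A$ and attracting fixed point $q(c) \in K$, the cohomology telescopes to $\tau^l(q(c)) = \sum_{i \to j} N_{ij}(c) f_{ij}$, where $N_{ij}(c)$ records the number of $i \to j$ transitions occurring in $c$. Combined with \eqref{eq:tracetau} this forces $|\tr(\gamma_{c_1})| = |\tr(\gamma_{c_2})|$ whenever $c_1$ and $c_2$ share the same transition multiset.

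The contradiction will come from exhibiting two such cycles with unequal traces. Choose distinct $a, b \in \A$ (possible since $|\A| \geq 2$) and set $G = g_a g_a$, $H = g_a g_b$, two elements of the generating set $S_\A$. I would consider the period-$6$ cycles corresponding to the cyclic words $\Pi_1 = GHGGGH$ and $\Pi_2 = GGHGGH$; a direct inspection shows both have transition multiset $\{GG, GG, GH, GH, HG, HG\}$. Writing $x = \tr G$, $y = \tr H$, $z = \tr(GH)$, iterated applications of the Fricke identity $\tr(A)\tr(B) = \tr(AB) + \tr(AB^{-1})$ in $\SL_2$ (equivalent to Lemma~\ref{trace}) give $\tr(\Pi_1) = x^2 z^2 - xyz - z^2 - x^2 + 2$ and $\tr(\Pi_2) = \tr((GGH)^2) = (xz - y)^2 - 2$; subtracting yields
\begin{equation*}
\tr(\Pi_1) - \tr(\Pi_2) = xyz - x^2 - y^2 - z^2 + 4.
\end{equation*}
A short matrix calculation shows $\tr(GH^{-1}) = 2$ for every choice of $a \neq b$, so the Fricke relation $xy = z + \tr(GH^{-1})$ forces $z = xy - 2$; substituting reduces the discrepancy to $-(x-y)^2 = -a^2(a-b)^2$, strictly nonzero for $a \neq b$. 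Since $G, H$ have positive entries, their products have positive trace, so the signs in $|\tr|$ are unambiguous, and the contradiction is complete.

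The step I expect to be the main obstacle is the final one: constructing the cycle pair uniformly in the alphabet. Many naive candidates are Fricke-equivalent in $\SL_2$ (i.e.\ cyclically or reversal-related as words in $G, H$) and automatically produce identical traces as polynomials in $(x,y,z)$. The pair $(\Pi_1, \Pi_2)$ above succeeds precisely because the identity $\tr(GH^{-1}) = 2$, peculiar to our continued-fraction generators $g_a g_a$ and $g_a g_b$, pins $(x, y, z)$ to the curve $z = xy - 2$ on which the polynomial trace discrepancy collapses to a manifestly nonzero perfect square $-a^2(a-b)^2$, settling the contradiction for every admissible $\A$.
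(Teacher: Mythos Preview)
Your proof is correct. Through the reduction via analyticity and the Anosov alternative to the statement ``cycles with the same transition multiset have equal trace,'' you follow the paper exactly. The divergence is only in the final contradiction.

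The paper compares $C_a = c_1 c_2 c_1 c_2$ with $C_b = c_1 c_1 c_2 c_2$ for arbitrary cycles $c_1, c_2$ based at a common vertex, and uses the trace identity (Lemma~\ref{trace}) to deduce that $[\gamma_1,\gamma_2]$ would have to be parabolic; it then invokes a Schottky-group argument (pass to high powers of two loxodromics with disjoint fixed-point sets) to rule this out. You instead fix the concrete generators $G=g_a^2$, $H=g_ag_b$ and compute the trace discrepancy between $\Pi_1=GHGGGH$ and $\Pi_2=GGHGGH$ by hand, obtaining $-a^2(a-b)^2\neq0$. In fact your pair is, up to cyclic rotation, exactly the paper's pair with $\gamma_1=G$ and $\gamma_2=GH$: writing $x=\tr G$, $y=\tr H$, $z=\tr(GH)$, your discrepancy $xyz-x^2-y^2-z^2+4$ is precisely $2-\tr([G,GH])$ by the Fricke relation, so you are verifying directly that this particular commutator is not parabolic. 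Your argument is more elementary---no discrete-group theory is needed---but it leans on the special identity $\tr(GH^{-1})=2$ peculiar to the continued-fraction generators; the paper's route is softer and would transfer unchanged to any free semigroup inside a convex cocompact group.
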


\begin{proof}
If $\tau$ does not have the non local integrability property, which we assume for a contradiction, then by Lemma \ref{alternative} and our argument from before $\tau$ is cohomologous to a function $f$ which is constant on cylinders of length $2$. That is
\begin{equation}
\tau(x) = g(x) - g(Tx) + f(x) 
\end{equation}
for some $g$. If $x$ is a periodic point of $T$ corresponding to a cycle $x \in \Sigma^p$, so that $T^n x  = x$ then this implies
\begin{equation}
\tau^n(x) = \sum_{ i = 0 }^{n-1} \tau( T^i x ) =\sum_{ i = 0 }^{n-1} g( T^i x )  - g( T^{i+1} x ) + f( T^i x ) = f^n(x).
\end{equation}
As $f$ is constant on cylinders of length $2$ it follows that when $x$ has period $n > 1$, $\tau^n (x)$ only depends on the multiset of pairs $(j_i , j_{i+1})$ which appear in the cycle $c$ given by
\begin{equation}
j_0 \to j_1 \to \cdots\to j_{n-1} \to j_n = j_0.
\end{equation}
Now let $c_1$ and $c_2$ be cycles of lengths $l_1 , l_2 \geq 2$ which begin and end at fixed $j_0$. Let $\g_1$ and $\g_2$ be the corresponding elements of $\G_\A$. Then the cycle $C_a = c_1 . c_2 . c_1 . c_2$ corresponds to the group element $\g_1\g_2 \g_1\g_2$ and the cycle $C_b = c_1 . c_1 . c_2 .c_2$ corresponds to $\g_1^2 \g_2^2$. Here, the symbol $.$ stands for the joining of cycles at their endpoint vertex $j_0$.
 Let  $L = 2l_1 + 2l_2$, the period of $C_a$ and $C_b$, and $q_a , q_b$ the fixed points in $I_{j_0}$ of corresponding group elements.
The pairs $(j_i , j_{i+1})$ which appear in $C_a$ and $C_b$ are the same when counted with multiplicity and so it follows that 
\begin{equation}
 \tau^L (  q_a ) = \tau^L(q_b) .
\end{equation} 
Now one  sees from \eqref{eq:tracetau} that
\begin{equation}
| \tr( \g_1 \g_2 \g_1 \g_2 ) |  =  | \tr(\g_1^2 \g_2^2 ) |.
\end{equation}
It is easy to see by direct calculation that the traces of elements of $\G_\A$ are positive and strictly greater than two.
Now Lemma \ref{trace} gives that $[ \g_1 , \g_2 ] = \g_1 \g_2 \g_1^{-1} \g_2^{-1}$ is a parabolic element of $\SL_2(\Z)$. We can ensure this does not happen and hence obtain a contradiction as follows.

We can choose $\g_1$ and $\g_2$ to have different attracting fixed points. This is possible since the periodic orbits of $T$ are dense in $K$ and $K$ has accumulation points in $I_{j_0}$. Note that $\g_1$ and $\g_2$ are loxodromic, meaning that they have two distinct fixed points on the ideal boundary of $\H$.  They  generate a discrete subgroup of $\SL_2(\Z)$. This implies that they do not have any fixed points in common, since they have different attracting fixed points and a group generated by a pair of distinct loxodromics with only one fixed point in common is not discrete.
Now by replacing  $\g_1$ and $\g_2$ with $\g_1^p$ and $\g_2^p$ for large enough $p$ depending on the separation of the 4 distinct fixed points of $\g_1$, $\g_2$, we can ensure $\g_1$ and $\g_2$ generate a Schottky sub\textit{group} of $\SL_2(\Z)$, in particular one that is freely generated by $\g_1 , \g_2$. In this case $[\g_1 , \g_2]$ cannot be a parabolic, leading to a contradiction.

\end{proof}

\subsection{The renewal equation on the boundary}\label{renewalsection}

We now show how one can adapt the work of Lalley \cite{LALLEYSYMB} to get counting estimates in our setting. The arguments of Lalley with the renewal equation do not produce an error term. However, given the strong bounds on the resolvent of the transfer operator that we have produced, it is possible to get a uniform error term in our counting problem. Most of the necessary arguments are given in \cite{BGSACTA}, however there is a small omission from their work which is the bridging between finite sequences, where the renewal equation applies to a counting problem, and infinite sequences where the strong bounds for the transfer operator hold. This bridging is carried out by Lalley in \cite[Theorem 4]{LALLEYSYMB} without any error term.
We work here to show that using the expanding property of the map $T$, this technical difficulty can be overcome. In some sense these arguments are the analog of a lemma of Ruelle from \cite{RUELLEFREDHOLM} relating the transfer operators to a dynamical zeta function. Consideration of the renewal equation offers an alternative framework to that of zeta functions.

We now adapt Lalley's work to our present framework. For simplicity, we write $\G=\Gamma_{\A}$ for the rest of this section.
Let $g \in C^1(I)$ be a non-negative function and $x \in I$. We define
\begin{equation}
N(a , x) =  \sum_{n = 0}^{\infty} \sum_{ y : T^n y = x} g(y) \mathbf{1}\{ \tau^n(y) \leq a \},
\end{equation}
where $\mathbf{1}\{ \tau^n(y) \leq a \}$ is the characteristic function of $\{ \tau^n(y) \leq a \}$. 
Only finitely many of the $n$ give a contribution to the sum, since $\tau$ is eventually positive. The \textit{renewal equation} states
\begin{equation}\label{eq:renewal}
N(a,x) = \sum_{ y : Ty = x } N(a - \tau(y) , y ) +g(x) \mathbf{1}\{a \geq 0 \}.
\end{equation}
This is related to the transfer operator $\L_{-s\tau}$ by taking a Laplace transform in the $a$ variable.
If one defines
\begin{equation}
n( s , x ) = \int_{-\infty}^\infty e^{ -s a } N(a , x) da
\end{equation}
then \eqref{eq:renewal} is transformed into
\begin{equation}
n(s , x ) =  [ \L_{-s \tau} n( s, \cdot) ] (x) + \frac{g(x)}{s} ,
\end{equation}
or
\begin{equation}
s n(s, x) = [(1 - \L_{-s\tau} )^{-1}  g ](x).
\end{equation}
The congruence version of the renewal equation at level $q$ concerns the quantity
\begin{equation*}
N_q(a , x , \vp ) \equiv \\  \sum_{n = 0}^{\infty} \sum_{ y : T^n y = x} g(y)  \rho (  c^n_q ( y )) \vp \mathbf{1}\{ \tau^n(y) \leq a \}.
\end{equation*}
where $\vp$ is a test function in $\C^{\G_q}$ and  we recall that $\rho$ is the right regular representation and $c^n_q $ is reduction mod $q$ of the cocycle defined in \eqref{defineourselvesacocyle}.
The congruence renewal equation reads
\begin{equation}
N_q(a , x , \vp ) =\sum_{ y : Ty = x } \rho( \pi_q ( c_0 ( y ) ))   N_q(a - \tau(y) , y , \vp)  +  g (x)  \vp   \mathbf{1}\{ 0 \leq a \} .
\end{equation}
so that the same arguments from before give
\begin{equation}\label{eq:nqdef}
s n_q(s, x , \vp) = [(1 - \L_{-s\tau, q} )^{-1}  g \otimes \vp ](x)
\end{equation}
where $g \otimes \vp$ is the vector valued function taking $x \mapsto   g (x)  \vp $ and 
\begin{equation}
n_q( s , x ,\vp ) = \int_{-\infty}^\infty e^{ -s a } N_q(a , x , \vp) da .
\end{equation}
Notice that $N_q$ and hence $n_q$ is linear in $\vp$. We can split into two cases as we can also write
\begin{equation}
\vp = \vp_0 + \vp' 
\end{equation}
where $\vp_0$ is constant and $\vp'$ is orthogonal to constants. The analysis of $N_q(a ,x ,\vp_0)$ boils down to that of $N(a, x)$, which is in principle understood without any of the results of this paper. We take up the analysis in the case that
\begin{equation} \vp' \in \C^{\G_q} \ominus 1,\end{equation} that is, orthogonal to constants. Assume this is the case from now on.

Then under the hypothesis of Theorem \ref{mainthm} (we have established the non local integrability property (NLI) for $\tau$ and will assume property (MIX) holds), we can estimate $(1 - \L_{-s\tau ,q})^{-1} g \otimes \vp'$ since $g \otimes \vp' \in C^1( I ; \C^{\G_q} \ominus 1)$.

One obtains from \eqref{eq:nqdef} and Theorem \ref{mainthm} that for any $\eta > 0$
\begin{equation}\label{eq:nqbound}
|s | \| n_q(s , \bullet , \vp' ) \|_{C^1} \leq \begin{cases} C q^C (1 - \rho_0)^{-1}\| g \otimes \vp \|_{C^1} \text{ if } |b| \leq b_0 \\
C_\eta | b|^{1 +\eta } ( 1 - \rho_\eta )^{-1}\| g \otimes \vp \|_{C^1}  \text{ if } |b| > b_0
\end{cases}
\end{equation}
with the same quantifiers and constants as in Theorem \ref{mainthm}. Consolidating constants, for any $\eta >0$ there is $C' = C'(\eta)$ such that
\begin{equation}\label{eq:nqdecay}
|s | \| n_q(s , \bullet, \vp' ) \|_{C^1}  \leq C' \max( q^C  , |b|^{1 + \eta} ) \| g \otimes \vp \|_{C^1}
\end{equation}
whenever $|a - s_0| < \e$ for some sufficiently small $\e$.

 We also note that given the bounds in Theorem \ref{mainthm}, it follows that the correspondence
\begin{equation}
s \mapsto (1 - \L_{-s\tau , q})^{-1} g \otimes \vp'
\end{equation}
gives a holomorphic family of $C^1$ functions in the region $|a  - s_0| < \e$ for fixed $g$ and $\vp'$, hence $n_q(s , x ,\vp')$ is holomorphic for $s$ in this region. 
Recall that we have $s_0=\delta_{\mathcal A}=\delta$.
This is essential for the contour shifting argument to follow. Now we follow technical work of Bourgain, Gamburd and Sarnak \cite[pp. 25-26]{BGSACTA} to extract information about $N_q(a,x ,\vp')$.

Let $k$ be a smooth nonnegative function on $\R$ such that
\begin{equation}
\int k = 1,
\end{equation}
\begin{equation}
\mathrm{support}(k) \subset \left[ 1 , 1 \right],
\end{equation}
and
\begin{equation}
|\hat{k }( \xi ) | \leq B \exp( - |\xi|^{1/2} )
\end{equation}
for some $B$, where
\begin{equation}
\hat{k}(\xi) \equiv \int_\R e^{-\xi t} k(t) dt.
\end{equation}

Then let for small $\lambda > 0$
\begin{equation}
k_\lambda(t) = \lambda^{-1} k ( t \lambda^{-1} ),
\end{equation}
this has the effect that
\begin{equation}\label{eq:scaledfourier}
\hat{k_\lambda}(\xi) = \hat{k}( \lambda \xi) , \quad |\hat{k_\lambda}(\xi)| \leq B \exp( - |\lambda \xi| ^{1/2}).
\end{equation}
Consider the smoothed quantity of interest
\begin{equation}
\int_{-\infty}^{\infty} k_\lambda(t)  N_q ( a + t, x , \vp'  ) dt = \frac{1}{2\pi i } \int_{s \in \delta + i\R} e^{as} n_q(s , x ,\vp' ) \hat{k_\lambda}(s ) ds.
\end{equation}
by inverting the Laplace transform and interchanging the order of integration. From \eqref{eq:nqdecay}, $n_q$ is well enough behaved that this is possible. For technical reasons let $\e' = \min( \delta / 2 , \e/2)$.
 We can shift the contour to $\Re(s) = \delta - \e'$ to get that the above is the same as
\begin{align}
&\frac{1}{2\pi i } \int_{s \in \delta - \e' + i\R} e^{as} n_q(s , x ,\vp' ) \hat{k_\lambda}(s ) ds \\
&= \frac{1}{2\pi} e^{a(\delta -\e')}  \int_{\theta \in \R} e^{ ai\theta} n_q(\delta -\e' + i\theta , x ,\vp' ) \hat{k_\lambda}(\delta -\e' + i\theta) d\theta 
\end{align}
where $s = \delta - \e' + i\theta$. Putting in the bound \eqref{eq:nqbound} for $n_q$ together with \eqref{eq:scaledfourier} gives the new bound
\begin{align}
&\frac{BC'}{2\pi} e^{a(\delta -\e')} \| g \otimes \vp' \|_{C^1 }   \left(q^C \int_{| \theta | \leq b_0} | \delta -\e' + i\theta|^{-1}   e^{ -|\lambda (\delta -\e' + i\theta)|^{1/2} } d\theta  \right.\\
&+ \left. \int_{ |\theta | > b_0 }| \delta -\e' + i\theta|^{-1}   |\theta |^{1 +\eta}  e^{ -|\lambda (\delta-\e' + i\theta)|^{1/2} } d\theta \right) \\
& \leq \frac{BC'}{2\pi} e^{a(\delta -\e')} \| g \otimes \vp' \|_{C^1 }  \left(\frac{4 q^C  b_0}{\delta -\e'} + C'' |\lambda |^{-1 -\eta} \right)
\end{align}
for some new absolute constants $C', C''$.
Putting this together (choosing $\eta = 1$ is enough) gives
\begin{lemma}\label{congtermrenewal}
Suppose that $\G$ has property (MIX) for $q \in \Q$. There is $Q_0>0$ provided by Theorem \ref{mainthm} and positive
constants $\e'$, $C$, $\k_1 $, $\k_2$ such that for $ q \in \Q$ with $(Q_0, q) = 1$ and any $g \in C^1(I) $, $\vp' \in \C^{\G_q} \ominus 1$ we have
\begin{equation}
\| \int_{ - \lambda }^{\lambda} k_{\lambda}(t)N_q(a + t , x , \vp' ) dt \| < e^{a(\delta -\e')} \| g \otimes \vp' \|_{C^1 }\left( \k_1 q^C + \k_2 |\lambda|^{-2} \right)
\end{equation}
where the norm on the left hand side is the one in $\C^{\G_q}$.

\end{lemma}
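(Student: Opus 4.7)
The plan is to obtain Lemma \ref{congtermrenewal} by inverting the Laplace transform that arose from the congruence renewal equation, shifting contours using the resolvent bounds of Theorem \ref{mainthm}, and trading rapid decay of $\widehat{k_\lambda}$ against polynomial growth of the transfer operator norms. Concretely, I would start from the identity
\begin{equation*}
s\, n_q(s, x, \vp') = [(1 - \L_{-s\tau, q})^{-1} g \otimes \vp'](x)
\end{equation*}
recorded in \eqref{eq:nqdef}. Because $\vp' \in \C^{\G_q} \ominus 1$, the vector valued function $g \otimes \vp'$ lies in $C^1(I; \C^{\G_q} \ominus 1)$, so for $(Q_0, q) = 1$ Theorem \ref{mainthm} combined with a Neumann series gives a uniform resolvent bound and hence the two-regime estimate \eqref{eq:nqbound} on $n_q$ in the strip $|a - s_0| < \e$.

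Next I would invoke Fourier--Laplace inversion to express the smoothed quantity as a vertical contour integral,
\begin{equation*}
\int_{-\lambda}^{\lambda} k_\lambda(t)\, N_q(a + t, x, \vp')\, dt = \frac{1}{2\pi i} \int_{\Re s = \delta} e^{as}\, n_q(s, x, \vp')\, \widehat{k_\lambda}(s)\, ds,
\end{equation*}
absolute convergence being supplied by the Gaussian-type decay of $\widehat{k_\lambda}$ from \eqref{eq:scaledfourier}. Setting $\e' = \min(\delta/2, \e/2)$, I would then shift the contour to $\Re s = \delta - \e'$, which is legitimate because Theorem \ref{mainthm} yields $\|\L^m_{s,q}\|_{C^1} \to 0$ uniformly on compact sets inside $|a - s_0| < \e$, so the Neumann series for $(1 - \L_{-s\tau, q})^{-1}$ converges termwise and the map $s \mapsto n_q(s, x, \vp')$ is holomorphic in the strip with no pole at $\Re s = \delta$.

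On the shifted contour I would split the $\theta = \Im s$ integral into the pieces $|\theta| \leq b_0$ and $|\theta| > b_0$. The bounded piece is handled by part \ref{mainfirst} of Theorem \ref{mainthm}, producing a contribution of size $\ll q^C\, e^{a(\delta - \e')} \|g \otimes \vp'\|_{C^1}$ after integrating the trivial kernel factor over the compact range. The unbounded piece uses part \ref{mainsecond} of Theorem \ref{mainthm} with $\eta = 1$, giving an integrand of order $|\theta|^{-1}\cdot |\theta|^{2}\cdot \exp(-|\lambda\theta|^{1/2})$; the substitution $u = \lambda\theta$ together with the rapid decay then yields a bound of order $|\lambda|^{-2}\, e^{a(\delta - \e')} \|g \otimes \vp'\|_{C^1}$. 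Summing the two contributions produces the claimed estimate with $\k_1, \k_2, C, \e'$ built from the constants of Theorem \ref{mainthm} and the fixed profile $k$.

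The main obstacle, already touched on above, is the justification of the contour shift. What is needed is not merely a spectral gap on the critical line $\Re s = \delta$ but holomorphic extension of the resolvent across that line, uniformly in $q \in \Q$ with a quantitative polynomial-in-$q$ control. This is precisely the strength of Theorem \ref{mainthm}: the uniform bounds in parts \ref{mainfirst} and \ref{mainsecond} imply convergence of the Neumann series throughout $|a - s_0| < \e$ with controlled growth in $|b|$, so the critical line carries no obstruction. Once this is in hand, the rest of the argument is routine integration against $\widehat{k_\lambda}$, exactly as in \cite[pp.~25--26]{BGSACTA}.
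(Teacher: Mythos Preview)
Your proposal is correct and follows essentially the same route as the paper: Laplace inversion from \eqref{eq:nqdef}, contour shift to $\Re s = \delta - \e'$ with $\e' = \min(\delta/2,\e/2)$ justified by the holomorphy coming from Theorem \ref{mainthm}, and the same split into $|\theta|\le b_0$ and $|\theta|>b_0$ handled by parts \ref{mainfirst} and \ref{mainsecond} respectively, with $\eta=1$ yielding the $|\lambda|^{-2}$ factor. The paper even cites the same passage of \cite{BGSACTA} you do.
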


We now describe $N_q(a, x , \vp_0)$ with $\vp_0$ a constant function. In this case the counting reduces to the non congruence setting. The following is a straightforward adaptation of \cite[Proposition 10.2]{BGSACTA} to our setting. This is an effectivization of work of Lalley \cite{LALLEYSYMB}, using the work of Naud \cite{NAUD} as input to get a power saving error term. Let  $\underline{ 1 }$ be the constant function in $\C^{\G_q}$ taking on the value 1.
\begin{lemma}\label{maintermrenewal}
There exists $\e''>0$ such that for any $q$, $g \in C^1(I) $ we have 
\begin{equation}
\int_{-\lambda}^{\lambda} k_\lambda(t) N_q( a + t , x , \underline{1} ) dt = C(x , g) e^{\delta a}\underline{1}  + O(\| g \|_{C^1} | \G_q | \lambda^{-3} e^{ ( \delta - \e'')a }),
\end{equation}
where 
\begin{equation}
C(x ,g) = \left(\frac{ \int g d\nu_{-\delta \tau} }{ \delta \int \tau d \nu_0 } \right) h_{- \delta \tau}(x).
\end{equation}
is a $C^1$ function of $x$ and the error is estimated in $C^1$ norm, and $\nu, h$ are the measures and functions coming from the Theorem \ref{RPFTheorem}. 
\end{lemma}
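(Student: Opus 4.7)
The plan is to exploit the trivial observation that $\rho(c_q^n(y))\underline{1} = \underline{1}$ for every $y$, since the constant function is fixed by the right regular representation. Consequently $N_q(a,x,\underline{1}) = N(a,x)\underline{1}$, and the vector-valued assertion reduces to the scalar renewal estimate
\begin{equation*}
\int_{-\lambda}^{\lambda} k_\lambda(t)\, N(a+t,x)\, dt = C(x,g)\, e^{\delta a} + O\bigl(\|g\|_{C^1}\, \lambda^{-3}\, e^{(\delta-\epsilon'')a}\bigr).
\end{equation*}
The factor $|\G_q|$ in the stated remainder only appears when this scalar bound is converted back into an assertion about a vector in $\C^{\G_q}$, and in fact a smaller factor of $\sqrt{|\G_q|}$ would suffice.

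To prove the scalar estimate I would follow the same inverse Laplace strategy as in Lemma \ref{congtermrenewal}. Starting from $s\, n(s,x) = (1-\L_{-s\tau})^{-1} g(x)$, we write
\begin{equation*}
\int_{-\lambda}^{\lambda} k_\lambda(t)\, N(a+t,x)\, dt = \frac{1}{2\pi i}\int_{\Re s = \sigma_0} e^{as}\, n(s,x)\, \hat{k}_\lambda(s)\, ds
\end{equation*}
for some $\sigma_0 > \delta$, and then shift the contour to $\Re s = \delta - \epsilon''$. The structural input, furnished by Theorem \ref{RPFTheorem} together with standard analytic perturbation theory, is that in a thin strip $|\Re s - \delta| < \epsilon''$ the resolvent $(1-\L_{-s\tau})^{-1}$ acting on $C^1(I)$ is meromorphic with a single simple pole at $s = \delta$, coming from the maximal eigenvalue $e^{P(-s\tau)}$ crossing $1$, while the rest of the spectrum stays uniformly bounded away from $1$. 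Direct computation of $\frac{d}{ds}e^{P(-s\tau)}\big|_{s=\delta} = -\int \tau\, d\nu_0$ and of the rank-one spectral projection $g \mapsto h_{-\delta\tau}\,\nu_{-\delta\tau}(g)$ identifies the residue of $n(s,x)e^{as}\hat{k}_\lambda(s)$ at $s=\delta$ as precisely $C(x,g)\,e^{\delta a}\,\hat{k}_\lambda(\delta)$; since $\hat{k}_\lambda(\delta) = 1 + O(\lambda)$ by smoothness of $k$ and $\int k = 1$, the discrepancy $(\hat{k}_\lambda(\delta)-1)C(x,g)e^{\delta a}$ is $O(\lambda)\,e^{\delta a}$ and is absorbed into the remainder.

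The shifted contour integral on $\Re s = \delta - \epsilon''$ is then controlled by the non-congruence case ($q=1$) of Theorem \ref{mainthm}, that is, Naud's estimates from \cite{NAUD}: geometric summation of the power bounds gives $\|(1-\L_{-s\tau})^{-1}\|_{C^1\to C^1} \ll_\eta |b|^{1+\eta}$ for $|b|$ large, together with a uniform bound for $|b|$ bounded. Combined with the Paley--Wiener decay $|\hat{k}_\lambda(s)| \leq B e^{-|\lambda s|^{1/2}}$ and integrated on the line $\Re s = \delta - \epsilon''$, exactly as in the passage preceding Lemma \ref{congtermrenewal}, this produces the $\lambda^{-3} e^{(\delta-\epsilon'')a}\|g\|_{C^1}$ bound. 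I expect the only non-routine step to be the bookkeeping of the spectral decomposition of $\L_{-s\tau}$ in a neighborhood of $s=\delta$, namely writing $(1-\L_{-s\tau})^{-1}$ as a sum of its pole term $(s-\delta)^{-1}$ times the one-dimensional spectral projection plus a remainder that is holomorphic and uniformly bounded in $C^1$ throughout the strip $|\Re s - \delta| < \epsilon''$, so that the contour may be shifted through a small detour around $s=\delta$ without meeting any other singularity. This separation is classical and follows from Theorem \ref{RPFTheorem}(3) via analytic perturbation of the isolated top eigenvalue, provided $\epsilon''$ is chosen smaller than the spectral gap of $\L_{-\delta\tau}$.
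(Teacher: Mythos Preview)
Your proposal is correct and follows essentially the same route as the paper, which does not give a detailed argument but simply refers to \cite[Proposition 10.2]{BGSACTA} as ``an effectivization of work of Lalley \cite{LALLEYSYMB}, using the work of Naud \cite{NAUD} as input to get a power saving error term.'' Your reduction $N_q(a,x,\underline{1})=N(a,x)\underline{1}$, contour shift through the simple pole of $(1-\L_{-s\tau})^{-1}$ at $s=\delta$, identification of the residue via the rank-one spectral projection from Theorem \ref{RPFTheorem}, and estimation of the shifted integral by Naud's bounds is exactly that adaptation spelled out; your observation that $\sqrt{|\G_q|}$ rather than $|\G_q|$ would suffice is also consistent with the paper's own remark that the factor ``comes from $\|g\otimes\vp_0\|_{C^1}$.''
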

We remark that the $| \G_q | \| g \|_{C^1}$ in the error term above comes from $\| g \otimes \vp_0 \|_{C^1}$. We can now put these Lemmas together to get
\begin{prop}\label{boundaryprop}
Suppose that $\G$ has property (MIX) for $q \in \Q$. There exists $Q_0>0$ provided by Theorem \ref{mainthm} such that when $ q \in \Q$ with $(Q_0,q) = 1$, the following holds. There is  $\e > 0$ such that for any non negative $\vp \in \R^{\G_q} \subset \C^{\G_q}$,
\begin{equation}
 N_q(a , x , \vp) = \frac{ C(x , g) e^{\delta a}  \langle \vp , \underline{ 1 }  \rangle  \underline {1} }{ | \G_q |}+ O \left( e^{( \delta  - \e  ) a }  q^C \| g\|_{C^1} \| \vp \| \right) 
\end{equation}
where  $\langle  \cdot ,\cdot \rangle$ is the standard inner product.
\end{prop}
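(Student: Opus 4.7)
The plan is to deduce the pointwise estimate from the smoothed estimates of Lemmas \ref{congtermrenewal} and \ref{maintermrenewal} by exploiting the componentwise monotonicity of $N_q(a,x,\vp)$ in $a$. First, I would decompose $\vp = \vp_0 + \vp'$, where $\vp_0 = \tfrac{\langle\vp,\underline{1}\rangle}{|\G_q|}\underline{1}$ is the orthogonal projection onto constants and $\vp' \in \C^{\G_q}\ominus 1$. By linearity of $N_q$ in its third argument, $N_q(a,x,\vp_0) = \tfrac{\langle\vp,\underline{1}\rangle}{|\G_q|}N_q(a,x,\underline{1})$, so Lemma \ref{maintermrenewal} controls the smoothed $\vp_0$ contribution while Lemma \ref{congtermrenewal} controls the smoothed $\vp'$ contribution. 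Using $|\langle\vp,\underline{1}\rangle|\le\sqrt{|\G_q|}\,\|\vp\|$, $\|\vp'\|\le\|\vp\|$, and $|\G_q|\ll q^3$ to consolidate constants, one arrives at
\[
\int_{-\lambda}^{\lambda} k_\lambda(t)\,N_q(a+t,x,\vp)\,dt = \frac{C(x,g)e^{\delta a}\langle\vp,\underline{1}\rangle}{|\G_q|}\underline{1} + O\!\left(q^{C_1}\|g\|_{C^1}\|\vp\|\bigl[\lambda^{-3}e^{(\delta-\e'')a} + e^{(\delta-\e')a}(q^{C_2}+\lambda^{-2})\bigr]\right).
\]

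Second, I would establish the componentwise monotonicity. Since $\rho$ is the right regular representation, each $\rho(c_q^n(y))$ is a permutation matrix and hence preserves the non-negative cone in $\C^{\G_q}$. With $g\ge 0$ and $\vp\ge 0$, every summand defining $N_q(a,x,\vp)$ has non-negative entries, so $a\mapsto N_q(a,x,\vp)$ is coordinatewise non-decreasing. Together with $k_\lambda\ge 0$ and $\int k_\lambda = 1$, this produces the componentwise sandwich
\[
N_q(a-\lambda,x,\vp)\le \int_{-\lambda}^{\lambda} k_\lambda(t)\,N_q(a+t,x,\vp)\,dt\le N_q(a+\lambda,x,\vp).
\]

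Third, I would combine the sandwich with the smoothed estimate after replacing $a$ by $a\pm\lambda$; the shift of the main term satisfies $|e^{\delta(a\pm\lambda)}-e^{\delta a}|\ll\lambda e^{\delta a}$, contributing an extra error of order $\lambda e^{\delta a}\|\vp\|\|g\|_{C^1}$ (using $|\langle\vp,\underline{1}\rangle|/|\G_q|\le \|\vp\|/\sqrt{|\G_q|}$ together with $|C(x,g)|\ll\|g\|_{C^1}$). Setting $\lambda = e^{-\e_1 a}$ with $\e_1>0$ chosen so small that $3\e_1<\e''$ and $2\e_1<\e'$, and then taking $\e = \min(\e_1,\e'-2\e_1,\e''-3\e_1)$, all error contributions collapse into $O\!\left(q^{C}e^{(\delta-\e)a}\|g\|_{C^1}\|\vp\|\right)$, which is the desired estimate.

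The only genuine obstacle is the routine balancing of $\lambda$ against the three competing error terms ($\lambda^{-3}$, $\lambda^{-2}$, and the shift $\lambda$); once the permutation property of $\rho$ supplies componentwise monotonicity of $N_q(a,x,\vp)$, the passage from the smoothed Lemmas \ref{congtermrenewal} and \ref{maintermrenewal} to the pointwise statement is a direct de-smoothing of the Tauberian type sketched in \cite{BGSACTA}.
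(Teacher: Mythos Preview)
Your proposal is correct and follows essentially the same route as the paper: decompose $\vp$ into its constant and orthogonal parts, apply Lemmas~\ref{congtermrenewal} and~\ref{maintermrenewal} to the smoothed quantity, use the componentwise monotonicity of $N_q(a,x,\vp)$ in $a$ (which, as you note, comes from $\rho$ acting by permutation matrices) to sandwich the unsmoothed count, and choose $\lambda$ to decay like $e^{-ca}$ to balance the errors. Your treatment is in fact slightly more explicit than the paper's in justifying monotonicity and in tracking the $q$-power coming from $|\G_q|$ in the main-term error, but the argument is the same.
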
 
\begin{proof}
Decompose $\vp$ as
\begin{equation}
\vp = \frac{\langle \vp , \underline{ 1 }  \rangle  \underline {1} }{ | \G_q |} + \vp' .
\end{equation}
Then Lemmas \ref{congtermrenewal} and \ref{maintermrenewal} give that
\begin{align}
\int_{-\lambda}^{\lambda} k_\lambda(t) N_q(a +t , x , \vp) dt&= \frac{ C(x , g) e^{\delta a}  \langle \vp , \underline{ 1 }  \rangle  \underline {1} }{ | \G_q |} \\
&+ e^{a(\delta - \e )} O \left(  \| g \|_{C^1} \| \vp \| ( \k_1 q^C + \k_2  \lambda^{-2} +  \lambda^{-3} ) \right)
\end{align}
by using that
\begin{equation}
\| g \otimes \vp' \|_{C^1 } \leq \| \vp' \| \| g \|_{C^1 }
\end{equation}
and replacing $\e' , \e''$ with a new small enough $\e$. Now taking $\lambda = e^{  - a \e / 6 }$ we have that the error term is
\begin{equation}
e^{a (\delta - \e/2 ) } O(q^C \| g\|_{C^1} \| \vp \| ).
\end{equation}
Since $\vp$ is non negative,  $N_q(a , x , \vp )$ is increasing in $a$ and hence
\begin{equation}
N_q( a - \lambda , x, \vp )\leq  \int_{-\lambda}^{\lambda} k_\lambda(t) N_q(a +t , x , \vp) dt \leq N_q(a + \lambda , x ,\vp) \end{equation}
which is enough to get the result given the exponentially shrinking $\lambda$, by replacing $\e$ with some smaller value.
\end{proof}

\subsection{Proof of main Corollary \ref{maincoro}}
So far we have given good bounds for the quantity $N(a , x , \vp)$.
		 In practice however, it is not this quantity that one is interested in but the related $\R^{\G_q}$ valued function
\begin{equation}
N^*_q(a , \g_0 , \vp )  \equiv  \sum_{  \g \in \G \cup \{1\}  \: : d( o, \g \g_0 o) - d( o , \g_0 o) \leq a } G(\g \g_0 o) \rho( \pi_q (\g) ). \vp 
\end{equation}
where 
\begin{itemize}
\item $G$ is a non negative function on $\H \cup \R$ with the property that there exist an integer $M$ and neighborhood $J_M$ of the length $M$ cylinders in $I$ such that $G$ is constant on $J_M$. We write $g$ for the restriction of $G$ to $\R$.
\item $\vp \in \R^{\G_q}$ ,  $\pi_q : \G \to \G_q$ is reduction mod $q$ and $\rho$ is the right regular representation of $\G_q$.
\item $o\in \H$ is a fixed origin and $\g_0 \in \G$.
\end{itemize} We now show how to relate the quantities $N^*_q$ and $N_q$. Let $d_E$ denote Euclidean distance in the upper half plane. We need to note the following Lemma.
\begin{lemma}\label{contracting}
There is some $r > 0$ such that $T$ has analytic extension to a neighborhood
\begin{equation}
T : I^{(r)} = \{ x \in \H \cup \R  \: : \: d_E(x , I ) < r \} \to \H \cup \R .
\end{equation}
Moreover, there is $0<\k < 1$ such that if $x \in I^{(r)}$ then 
\begin{equation}\label{eq:contracting}
d_E( g_i x , I ) \leq \k d_E( x  , I )
\end{equation}
for all $i  \in \A \times \A$. Here $g_i$ is a generator of $\G_\A$.
\end{lemma}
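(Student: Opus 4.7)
The plan is to prove the two assertions of the lemma in turn: first the analytic extension of $T$, then the contraction estimate \eqref{eq:contracting}. Both reduce to elementary facts about the Möbius transformations $g_i = g_a g_{a'}$, combined with compactness arguments.

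For the analytic extension, recall that on the piece $I_{a,a'}$ the map $T$ coincides with the M\"{o}bius transformation $(g_a g_{a'})^{-1} \in \SL_2(\Z)$. This rational function has a single pole; an explicit computation gives $g_a g_{a'}(z) = (z+a')/(az + 1 + aa')$, so its inverse has its pole at $w = 1/a$, which lies strictly to the right of $I_a \supset I_{a,a'}$ at distance bounded below by $(A+1)^{-1}/(a(a+(A+1)^{-1})) > 0$. Since the intervals $I_{a,a'}$ are a disjoint finite collection of closed intervals, I would choose $r > 0$ smaller than both (i) half the minimum gap between distinct $I_{a,a'}$'s, and (ii) half the minimum distance from each $I_{a,a'}$ to the corresponding pole $1/a$. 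With this choice $I^{(r)}$ splits into disjoint open neighborhoods of the $I_{a,a'}$, and $T$ is defined and analytic on each. Moreover, since $g_a g_{a'} \in \SL_2(\R)$ preserves $\H \cup \R$, so does its inverse, so $T$ maps $I^{(r)}$ into $\H \cup \R$ as claimed.

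For the contraction, I would argue as follows. Each generator $g_i = g_a g_{a'}$ is holomorphic off its single (real) pole, which lies outside $I$. From the computation in the previous subsection, $|T'(z)| \geq \gamma(\A) > 1$ for $z \in I$; by the chain rule this gives $|g_i'(w)| \leq \gamma^{-1} < 1$ for $w \in g_i(I) \subset I$. Since $g_i'$ is continuous on a neighborhood of $I$ (a compact set), one may pick any $\kappa \in (\gamma^{-1}, 1)$ and then, after shrinking $r$ if necessary, arrange that
\begin{equation}
|g_i'(z)| \leq \kappa \quad \text{for all } z \in I^{(r)} \text{ and all } i \in \A \times \A.
\end{equation}
This is where all finitely many generators are handled simultaneously. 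Given this, for any $x \in I^{(r)}$ let $x_0 \in I$ achieve $|x - x_0| = d_E(x, I)$ (which exists by compactness of $I$). Then $g_i(x_0) \in I_i \subset I$, and the straight-line segment from $x_0$ to $x$ has length at most $r$ and hence remains in $I^{(r)}$. Integrating $g_i'$ along this segment yields
\begin{equation}
d_E(g_i x, I) \leq |g_i(x) - g_i(x_0)| = \left| \int_0^1 g_i'\bigl(x_0 + t(x-x_0)\bigr)(x-x_0)\, dt \right| \leq \kappa\, d_E(x, I),
\end{equation}
which is \eqref{eq:contracting}.

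The main (and really only) obstacle is the bookkeeping required to pick a single $r > 0$ that simultaneously enforces: the analyticity of $T$ on $I^{(r)}$; the uniform derivative bound $|g_i'| \leq \kappa$ across all $i \in \A \times \A$; and the convexity condition that the segment $[x_0, x]$ stays inside $I^{(r)}$. All three requirements are finite intersections of open conditions over a compact set, so each holds for $r$ sufficiently small, and we simply take the minimum.
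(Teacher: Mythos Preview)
Your argument is correct, modulo one notational slip: where you write ``$|g_i'(w)| \leq \gamma^{-1}$ for $w \in g_i(I) \subset I$'', you have the domain backwards. What the chain rule actually gives is $|g_i'(w)| = 1/|T'(g_i(w))|$, so the bound holds for $w$ with $g_i(w) \in I_i$, i.e.\ for $w \in T(I_i) \supset I$. In particular the bound holds on all of $I$, which is what you need for the continuity-and-compactness step to yield $|g_i'| \leq \kappa$ on $I^{(r)}$. With that corrected, your segment-integration argument goes through cleanly.

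The paper's own proof is two sentences: it declares the analytic extension clear and says the contraction follows from the expanding property of $T$ together with the fact that M\"{o}bius transformations preserve circles. The idea there is that a small Euclidean disk centered on $I$ is mapped by $g_i$ to another disk whose radius has shrunk by roughly $|g_i'|$, hence by a factor $\leq \kappa$; since the image center $g_i(x_0)$ lies in $I$, the distance to $I$ contracts. Your integration along the segment $[x_0,x]$ accomplishes the same thing more explicitly and avoids having to control how the center and radius of the image circle behave under a non-infinitesimal M\"{o}bius map. Both approaches rest on the same derivative bound $|g_i'| \leq \gamma^{-1} < 1$ near $I$; yours is the more self-contained execution.
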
 \begin{proof}
The fact that $T$ has a complex  analytic extension to a neighborhood of $I$ is clear. The inequality \eqref{eq:contracting} follows from the fact that M\"{o}bius transformations preserve circles and the expanding property of $T$ proved earlier.
\end{proof}
 Let $\G^{(n)}$ denote those $\g \in \G$ which are a product of at least $n$ generators. Define for $n \geq 1$ the shift
\begin{equation}
\sigma : \G^{(n)} \to \G^{(n - 1)} , \quad \sigma( g_{i_1} g_{i_2} \ldots g_{i_n} ) = g_{i_2} \ldots g_{i_n} 
\end{equation} with the convention that $\G^{(0) } = \G \cup  \{ e \}$ and $\sigma(g_i) = e$ for all $i \in \A \times \A$.
Define for $\g \in \G$
\begin{equation}
\tau_*( \g  ) = d( o ,  \g o ) - d( o ,  (\sigma \g) o ) .
\end{equation}
Define for $n \geq N$ and $\g \in \G^{(n) }$
\begin{equation}
\tau_*^N(\g ) = \sum_{j = 0}^{N-1} \tau_*(\sigma^j \g )  =  d( o ,  \g  o ) - d( o ,  (\sigma^N \g) o ) .
\end{equation}
We can now recast $N^*_q$ as
\begin{equation}
N_q^*(a, \g_0 , \vp ) = \sum_{n=0}^\infty \sum_{\g \in \G : \sigma^n \g = \g_0 } G( \g ) \rho( \pi_q(\g \g_0^{-1}) )\cdot  \vp   \mathbf{1}\{ \tau_*^n (\g ) \leq a \} .
\end{equation}
One obtains again a renewal equation:
\begin{equation}\label{eq:finiterenewal}
N_q^*(a, \g_0 , \vp ) = \sum_{ \g : \sigma \g = \g_0 }  N_q^*(a - \tau_*(\g) , \g , [\rho( \pi_q( \g \g_0^{-1}) )\vp] ) + G( \g_0 ) \vp \mathbf{1}\{a \geq 0 \}  
\end{equation}
where $\rho$  is the right regular representation.

\begin{lemma}\label{bridging}
Fix $k_0 \in K$.
Let $\g \in \G^{(n)} $ and $\g_0 \in \G^{(N)}$. Then 
\begin{equation}
\tau_*^n( \g \g_0 ) = \tau^n( \g \g_0 k_0 ) + O ( \k^N ).
\end{equation} 
\end{lemma}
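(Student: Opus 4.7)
My plan is a direct geometric comparison using the hyperbolic geometry of $\H$ and the contraction of Lemma~\ref{contracting}. Fix the basepoint $o = i$. The formula $\cosh d(i,z) = (|z|^2+1)/(2\,\mathrm{Im}(z))$ gives, for $z$ near $\R$, the expansion $d(i,z) = \log(|z|^2+1) - \log\mathrm{Im}(z) + O(\mathrm{Im}(z)^2)$. Combined with the conformal identity $\mathrm{Im}(\g z) = |\g'(z)|\,\mathrm{Im}(z)$ for $\g \in \SL_2(\R)$, and the telescoped form $\tau_*^n(\g\g_0) = d(o,\g\g_0 o) - d(o,\g_0 o)$, subtracting two copies of this expansion yields
\[
\tau_*^n(\g\g_0) \;=\; -\log|\g'(\g_0 o)| \;+\; \log\frac{|\g\g_0 o|^2+1}{|\g_0 o|^2+1} \;+\; O(\k^{2N}).
\]
On the other side the chain rule applied to the inverse branch $\g \in \G^{(n)}$ of $T^n$ gives $\tau^n(\g\g_0 k_0) = -\log|\g'(\g_0 k_0)|$, so the lemma amounts to matching the two right-hand sides modulo $O(\k^N)$.

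The key geometric ingredient is Lemma~\ref{contracting}, applied inside $\H$: after absorbing a bounded number of initial (rightmost) generators of $\g_0$ sufficient to push $o$ into the neighborhood $I^{(r)}$, the remaining generators contract at rate $\k$, giving $|\g_0 o - \g_0 k_0| = O(\k^N)$ and $|\g\g_0 o - \g\g_0 k_0| = O(\k^{n+N})$. A standard Koebe/bounded-distortion estimate for the analytic M\"obius branch $\g$ along its attracting cylinder then upgrades this to $\log|\g'(\g_0 o)| = \log|\g'(\g_0 k_0)| + O(\k^N)$, so the first term of the identity above matches $\tau^n(\g\g_0 k_0)$ up to the required error.

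The residual boundary factor $\log\!\bigl((|\g\g_0 o|^2+1)/(|\g_0 o|^2+1)\bigr)$ equals, by the same contraction and a further $O(\k^N)$ error, the $T$-coboundary $F(\g\g_0 k_0) - F(\g_0 k_0) = F(y) - F(T^n y)$ with $F(x) := \log(x^2+1)$ and $y = \g\g_0 k_0$. Since $\tau$ and the cohomologous function $\tau + F - F\circ T$ (which is the logarithm of the visual derivative of $T$ with basepoint $i$) produce the same pressure, the same transfer-operator spectrum and the same equilibrium data used throughout Sections~\ref{largeimaginarypart}--\ref{smallimaginarypart}, this coboundary is absorbed into the (cohomological) choice of $\tau$ without affecting any earlier statement, yielding $\tau_*^n(\g\g_0) = \tau^n(\g\g_0 k_0) + O(\k^N)$.

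The main obstacle is verifying that the contraction from Lemma~\ref{contracting} and the Koebe distortion estimate are uniform in both $n$ and $N$, so that the cumulative error through the telescoping is a convergent geometric sum dominated by its first term $\k^N$; the cohomological identification of $\tau$ with its visual counterpart is a straightforward bookkeeping point that does not affect the spectral theory built up earlier.
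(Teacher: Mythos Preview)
Your overall strategy---expand $d(i,z)=\log(|z|^2+1)-\log\mathrm{Im}(z)+O((\mathrm{Im}\,z)^2)$ near $\R$, feed in the conformal identity $\mathrm{Im}(\g z)=|\g'(z)|\,\mathrm{Im}(z)$, and control all Euclidean errors by the contraction of Lemma~\ref{contracting}---is exactly the geometric mechanism the paper uses. The organisational difference is only that the paper compares $\tau_*$ with $\tau$ \emph{one generator at a time}, invoking the single-step estimate $\tau_*(g_{j_0}w)=-\log|g_{j_0}'(w)|+o(d_E(w,\R))$ (referenced to Lalley) and then summing the resulting geometric series, whereas you telescope first to $\tau_*^n(\g\g_0)=d(o,\g\g_0 o)-d(o,\g_0 o)$ and compare once via a global distortion bound. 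These two routes are formally equivalent and produce the same $O(\k^N)$ control on the ``honest'' error terms.

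The genuine gap is your treatment of the boundary factor $\log\bigl((|\g\g_0 o|^2+1)/(|\g_0 o|^2+1)\bigr)$. You correctly identify it as the $T$-coboundary $F(y)-F(T^n y)$ with $F(x)=\log(x^2+1)$, and you correctly observe that passing from $\tau$ to $\tau+F-F\circ T$ leaves the pressure, the transfer-operator spectrum, and all of Sections~\ref{largeimaginarypart}--\ref{smallimaginarypart} unchanged. But that is a change of \emph{statement}, not a proof of the stated one: the lemma is asserted for the specific $\tau=\log|T'|$ fixed in this section, and for that $\tau$ the coboundary $F(\g\g_0 k_0)-F(\g_0 k_0)$ is $O(1)$, not $O(\k^N)$. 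Your final sentence (``absorbed into the cohomological choice of $\tau$ \ldots\ yielding $\tau_*^n=\tau^n+O(\k^N)$'') therefore does not follow. Moreover the downstream use in Lemma~\ref{sandwich} compares the indicators $\mathbf{1}\{\tau_*^n(\g)\le a\}$ and $\mathbf{1}\{\tau^n(k(\g))\le a\pm C\k^N\}$ termwise over \emph{all} preimages $\g$, so an $O(1)$ shift that depends on $\g$ (through $F(\g\g_0 k_0)$) is not innocuous there either. If you want to rescue your global/telescoped argument you must either (i) carry the coboundary explicitly through Lemma~\ref{sandwich} and the renewal iteration and check it washes out in the final asymptotic, or (ii) revert to the paper's term-by-term comparison, where the single-step input from Lalley is precisely what makes the boundary correction disappear at each step rather than accumulate.
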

\begin{proof}
There is some $n_0$ such that all $\g \in \G^{(n_0)}$, $\g o \in I^{(r)} $. Then by Lemma \ref{contracting}, for all $N > n_0$ we have for all $\g \in \G$ that
\begin{equation}\label{eq:convergence}
d_E( \g \g_0 o , \g \g_0 k_0 ) \ll  \k^{n + N - n_0  } .
\end{equation}
with implied constant depending only on $\A$, and $\k$ as in Lemma \ref{contracting}. We also have
\begin{equation}
\tau_*(  g_{j_0} g_{j_1} g_{j_2} \ldots g_{j_{n-1}} \g_0 ) =- \log | g_{j_0} '( g_{j_2} \ldots g_{j_{n-1}} \g_0 o) | + o( d_E(  g_{j_2} \ldots g_{j_N} \g_0 o , \R) ).
\end{equation}
A similar estimate is given in \cite[pg. 41]{LALLEYSYMB}. It follows then that
\begin{equation}
\tau_*(  g_{j_0} g_{j_1} g_{j_2} \ldots g_{j_{n-1}} \g_0 )  =- \log | g_{j_0} '( g_{j_2} \ldots g_{j_{n-1}} \g_0 o) | + O( \k^{n + N -1 - n_0} ).
\end{equation}
Since there is some uniform bound for the  derivative of $\log| [g_i]' |$ close to $I$, this together with \eqref{eq:convergence} implies 
\begin{equation}
\tau_*(  g_{j_0} g_{j_1} g_{j_2} \ldots g_{j_{n-1}}\g_0 )  = - \log | g_{j_0} '( g_{j_2} \ldots g_{j_{n-1}} \g_0 k_0) |  + O(\k^{ n + N  - 1 - n_0} ) .
\end{equation}

By iterating and summing the geometric series it follows that 
\begin{equation}
\tau_*^n(  g_{j_0} g_{j_1} g_{j_2} \ldots g_{j_{n-1}}\g_0 ) = - \log   |  ( g_{j_0}  g_{j_2} \ldots g_{j_{n-1} } )' \g_0 k_0 ) | + O(\k^{N  -n_0} ) 
\end{equation}
or what is the same,
\begin{equation}\label{eq:bridging}
\tau_*^n( \g \g_0 )  = \tau^n (\g \g_0 k_0) +  O(\k^{N- n_0 } ).
\end{equation}
We can absorb $n_0$ into the implied constant, proving the Lemma.
\end{proof}

\begin{lemma}\label{sandwich}
Fix some $k_0 \in K$ and suppose $\vp$ is non negative.
There are $N_0$, $\k < 1$ and $C$ depending on $G$ such that if $\g_0 = g_{j_0} g_{j_1} g_{j_2} \ldots g_{j_N}$ with $N > N_0$ we have $\g_0 \in I^{(r)} $ and 
\begin{equation} 
 N_q(a - C \k^N , 
\g_0 k_0  , \vp ) \leq N_q^*(a, \g_0   , \vp ) \leq  N_q(a + C \k^N , \g_0 k_0  , \vp ) .
\end{equation}
 The inequalities are between functions in $\R^{\G_q}$.
\end{lemma}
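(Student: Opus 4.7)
The plan is to establish the sandwich by exhibiting an explicit bijection between the terms of the sums defining $N_q^*(a, \g_0, \vp)$ and $N_q(a, \g_0 k_0, \vp)$, under which matched terms agree on their function-value and representation factors, and their indicator thresholds differ by at most $C\k^N$. Since $\vp \geq 0$ and the right regular representation $\rho$ acts by coordinate permutations on $\C^{\G_q}$, every individual summand is a non-negative vector in $\R^{\G_q}$, so a term-by-term sandwich will sum to the desired coordinatewise sandwich for the whole expressions.

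First I would verify $\g_0 o \in I^{(r)}$ for $N > n_0$ with $n_0$ absolute, a direct consequence of Lemma \ref{contracting}: the generators $g_i$ strictly contract $I^{(r)}$ toward $I$, so any word of length $\geq n_0$ (with $n_0$ depending only on $d_E(o, I)$ and $\k$) sends $o$ into $I^{(r)}$. The same reasoning ensures that for any admissible $(i_1, \ldots, i_n)$, both $\g o$ and $\g k_0$ with $\g = g_{i_1}\cdots g_{i_n}\g_0$ lie in $I^{(r)}$, and in fact stay close to $\Lambda(\A)$.

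Next, for each admissible tuple $(i_1, \ldots, i_n)$, the element $\g = g_{i_1}\cdots g_{i_n}\g_0$ with $\sigma^n \g = \g_0$ appearing in $N_q^*$ corresponds bijectively to the point $y = \g k_0$, which is precisely a preimage of $\g_0 k_0$ under $T^n$, since the inverse branches of $T$ are the generators of $\G_\A$. Under this bijection three things need to be matched. \emph{Function values:} for $N \geq M$, both $\g o$ and $y = \g k_0$ lie in the neighborhood of the same length-$M$ cylinder of $\Lambda(\A)$, determined by the first $M$ generators of $\g$ (which exist because $\g$ has length at least $N \geq M$), so $G(\g o) = g(y)$ by the locally constant hypothesis on $G$. \emph{Cocycle factors:} $\pi_q(\g \g_0^{-1})$ and $c_q^n(y)$ encode the same ordered word $g_{i_1}\cdots g_{i_n}$ reduced modulo $q$, by the definition of $c_q$ and by \eqref{defineourselvesacocyle}, so $\rho(\pi_q(\g \g_0^{-1})) \vp$ and $\rho(c_q^n(y)) \vp$ agree and are componentwise non-negative since $\rho$ permutes coordinates and $\vp \geq 0$. \emph{Indicators:} Lemma \ref{bridging} supplies an absolute constant $C>0$ with $|\tau_*^n(\g) - \tau^n(y)| \leq C\k^N$, so $\mathbf{1}\{\tau^n(y) \leq a - C\k^N\} \leq \mathbf{1}\{\tau_*^n(\g) \leq a\} \leq \mathbf{1}\{\tau^n(y) \leq a + C\k^N\}$.

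Summing these pointwise matches over $n \geq 0$ and admissible $(i_1, \ldots, i_n)$ (the sums being finite by eventual positivity of $\tau$ and $\tau_*$, so there are no convergence subtleties) produces exactly $N_q(a - C\k^N, \g_0 k_0, \vp) \leq N_q^*(a, \g_0, \vp) \leq N_q(a + C\k^N, \g_0 k_0, \vp)$. The main obstacle is the first of the three matches: one must verify that the locally constant property of $G$ propagates uniformly in $n$ and in the chosen inverse branch. This is handled by the choice $N \geq M$, which guarantees that the first $M$ symbols of every word $g_{i_1}\cdots g_{i_n}\g_0$ are entirely determined by $\g$ independently of whether the terminal point is $o$ or $k_0$; the cocycle matching is a routine bookkeeping check on conventions, and the indicator comparison is an immediate application of Lemma \ref{bridging}.
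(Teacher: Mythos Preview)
Your proposal is correct and follows essentially the same approach as the paper: set up the bijection $\g \leftrightarrow \g k_0$ between $\{\g : \sigma^n\g = \g_0\}$ and $\{y : T^n y = \g_0 k_0\}$, match the three ingredients (function value, cocycle, indicator) term by term, and sum using non-negativity. One small imprecision: the condition ``$N \geq M$'' alone does not guarantee $G(\g o) = g(\g k_0)$; you need $N \geq N_0(G)$ large enough that the contraction of Lemma~\ref{contracting} has pulled $\g o$ into the Euclidean neighborhood $J_M$ (whose width depends on $G$), not merely that $\g$ determines the first $M$ symbols. The paper absorbs this into the choice of $N_0$; with that adjustment your argument matches the paper's.
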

\begin{proof}
We have for fixed $k_0 \in K$ and $\g_0 = g_{j_0} g_{j_1} g_{j_2} \ldots g_{j_N}$ a one-to-one correspondence
\begin{equation}
k: \g \to I ,\quad k( \g  ) = \g  k_0 
\end{equation}
with the property that $\sigma^n \g = \g_0$ if and only if $T^n k(\g) = \g_0 k_0 = k(\g_0 ) $. Under this correspondence one has
\begin{align}\label{eq:finiteN}
N_q^*(a, g_{j_0} g_{j_1} g_{j_2} \ldots g_{j_N}   , \vp ) = \sum_{n=0}^\infty \sum_{\substack { \g \in \G : \sigma^n \g = \g_0 } } G( \g )  \rho(\pi_q(\g\g_0^{-1}))\vp  \mathbf{1}\{ \tau_*^n (\g ) \leq a \}
\end{align}
and
\begin{align}
N_q(a, \g_0 k_0 , \vp ) =\sum_{n=0}^\infty \sum_{\substack { \g \in \G : \sigma^n \g = \g_0 }} g( k (\g) )  \rho(\pi_q(\g\g_0^{-1}))\vp \mathbf{1}\{ \tau^n ( k(\g)  ) \leq a \}.
\end{align}
These can now be compared term by term. If $N$ is large enough, depending on $G$, then $G(\g) = g( k(\g))$ for all terms as all the $\g$ will lie in the neighborhood $J_M$. On the other hand, we have from Lemma \ref{bridging} that if $\sigma^n \g = \g_0$
\begin{equation}
\mathbf{1}\{ \tau_*^n(\g) \leq a \} \leq \mathbf{1}\{ \tau^n(k(\g)) \leq a + C \k^{N - n_0 } \}
\end{equation}
for some $C$ and $n_0$
and
\begin{equation}
 \mathbf{1}\{ \tau^n(k(\g)) \leq a - C \k^{N - n_0} \} \leq \mathbf{1}\{ \tau_*^n(\g) \leq a \} .
\end{equation}
Given that $\vp$ and hence $\rho(\pi_q(\g\g_0^{-1} ))\vp$ are positive functions, inserting these inequalities into \eqref{eq:finiteN} gives the result, by increasing $n_0$ to $N_0$ so that the condition $\g \in J_M$ also holds when $\sigma^n = \g$ for some $n \geq 0$.
\end{proof}

Following Lalley \cite[pg. 22]{LALLEYSYMB} we iterate the finite renewal equation \eqref{eq:finiterenewal} to obtain
\begin{align}
N_q^*(a, \g_0 , \vp ) &= \sum_{ \g : \sigma^n \g = \g_0 } N_q^*(a - \tau_*^n(\g), \g , \rho[ \pi_q ( \g \g_0^{-1} ) ] \vp ) \\
&+ \sum_{m=1}^{n-1} \sum_{\g : \sigma^m \g = \g_0 } G( \g ) \rho[ \pi_q( \g \g^{-1}_0) ]\vp  \mathbf{1}\{a  - \tau_*^m(\g) \geq 0 \}  + G( \g_0 ) \vp\mathbf{1}\{a \geq 0 \}.
\end{align}
We want to increase $n$ so we note that the second line is bounded by
\begin{equation}
\sum_{m = 0 }^{n-1} | \A \times \A |^m \| G \|_\infty \| \vp \|  \ll  \| G  \|_\infty \| \vp \|  | \A \times \A |^n.
\end{equation}
We will take $n = \lfloor c  a \rfloor$ for small enough $c$.
 This gives
\begin{equation}
N_q^*(a, \g_0 , \vp ) = \sum_{ \g : \sigma^n \g = \g_0 } N_q^*(a - \tau_*^n(\g), \g ,  \rho[ \pi_q ( \g \g_0^{-1} )] \vp ) + O ( \| G \|_\infty \| \vp \| e^{2 c \log |\A|  a} ) .
\end{equation}
We can now use Lemma \ref{sandwich} to get that up to  $O( \| G \|_\infty \| \vp \|  e^{2 c \log |\A|  a} )$, $N_q^*(a , \g_0 , \vp)$ is sandwiched between 
\begin{equation}
\sum_{ \g : \sigma^n \g = \g_0 } N_q(a - \tau_*^n(\g) - C\k^n, \g k_0,  \rho[ \pi_q ( \g \g_0^{-1} ) ] \vp ) 
\end{equation}
and
\begin{equation}
 \sum_{ \g : \sigma^n \g = \g_0 } N_q(a - \tau_*^n(\g) + C\k^n, \g k_0 ,  \rho[ \pi_q ( \g \g_0 ^{-1}) ] \vp ) .
\end{equation}
Using the precise asymptotics of Proposition \ref{boundaryprop}, under the hypothesis that $q \in \Q$ with $(Q_0, q) =1$ where $\G$ has property (MIX) we have that
\begin{align}
N_q^*(a, \g_0 , \vp ) &= \left(1 + O( \delta C \k^n ) \right) \frac{e^{\delta a }}{| \G_q |}   \langle \vp , \underline{ 1 }  \rangle  \underline {1}   \sum_{\g : \sigma^n \g = \g_0 } C(\g k_0 , g)  e^{ - \delta \tau_*^n(\g) } \\
&+ O \left( q^C \| g\|_{C^1} \| \vp \|e ^{( \delta  - \e  ) a }  \sum_{\g : \sigma^n \g = \g_0 } e^{-( \delta  - \e  )  \tau_*^n(\g) }  \right)   + O( \| G \|_\infty \| \vp \| e^{2 c \log |\A|  a} ).
\end{align}
Given that $n = \lfloor c a \rfloor$ for some small $c$ yet to be chosen, the $\k^n$ term will not be significant. We do however have to describe the terms
\begin{equation}
 \sum_{\g : \sigma^n \g = \g_0 } C(\g k_0 , g)  e^{ - \delta \tau_*^n(\g) }
\end{equation}
and 
\begin{equation}
 \sum_{\g : \sigma^n \g = \g_0 } e^{-( \delta  - \e  )  \tau_*^n(\g) } .
\end{equation}
The latter can be bounded using Lemma \ref{bridging} with $N = 0$ to give $\tau_*^n( \g) = \tau^n(\g k_0 ) + O(1)$ and hence 
\begin{equation}\label{eq:taubridged}
 \sum_{\g : \sigma^n \g = \g_0 } e^{-( \delta  - \e  )  \tau_*^n(\g) } \ll \sum_{ k : T^n k = \g_0 k_0 } e^{-(\delta - \e) \tau^n( k ) } = [\L_{-(\delta - \e) }^n 1 ](\g_0 k_0 ).
\end{equation}
We know that $\L_{- (\delta - \e )\tau}$ is bounded by $\exp( P( - (\delta - \e) \tau ))$ by the Ruelle-Perron-Frobenius theorem. We now therefore require $n< \frac{a \e}{2 P( - (\delta - \e ) \tau ) }  $ so that
\begin{equation}
 [\L_{-(\delta - \e) }^n 1 ](\g_0 k_0 ) \leq \exp( n P( - (\delta - \e) \tau ) ) \leq \exp( a \e / 2 ).
\end{equation}
To describe the main term
\begin{equation}\label{eq:mainterm}
\frac{e^{\delta a } }{ | \G_q | }   \langle \vp , \underline{ 1 }  \rangle  \underline {1} \sum_{\g : \sigma^{n} \g = \g_0 }    C(\g k_0 , g) e^{-\delta \tau_*^n(\g) } ,
\end{equation}
we require the following result of Lalley (cf. \cite[Theorem 4]{LALLEYSYMB}). It says that there is a version of the maximal eigenfunction $h_{-\delta \tau}$ on $\G$, as opposed to $K$.
\begin{lemma}\label{hinside}
Fix $k_0 \in K$.
There is a  unique positive function $h_* : \G \to \R$ such that there is $\theta > 1$ so that  if $\g \in \G^{(n)}$
\begin{equation}
h_* ( \g )  = h_{ - \delta \tau } ( \g k_0 ) + O( \theta^{- n } ) .
\end{equation}
Also, for all $\g \in \G$,
\begin{equation}\label{eq:stationaryhstar}
h_*(\g) = \sum_ {\g' : \sigma(\g') = \g } e^{-\delta \tau_*(\g') } h_*(\g').
\end{equation} 
\end{lemma}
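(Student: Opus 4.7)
\smallskip

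\noindent\textbf{Proof plan for Lemma \ref{hinside}.}
The strategy is to produce $h_*$ as a fixed point of an intrinsic transfer operator on $\G$ and to compare this operator, via the map $k(\gamma) := \gamma k_0$, to the honest transfer operator $\L_{-\delta\tau}$ on $K$ of Theorem \ref{RPFTheorem}, whose eigenvalue is $1$ since $P(-\delta\tau) = 0$. Concretely, define on bounded functions $f : \G \cup \{e\} \to \R$ the operator
\begin{equation*}
[\mathcal L_* f](\gamma) \equiv \sum_{\gamma' : \sigma(\gamma') = \gamma} e^{-\delta\tau_*(\gamma')} f(\gamma').
\end{equation*}
The required equation \eqref{eq:stationaryhstar} is exactly $\mathcal L_* h_* = h_*$, so the task is to construct a positive fixed point of $\mathcal L_*$ whose values track $h_{-\delta\tau} \circ k$ with exponentially small error in the depth.

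\smallskip

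\noindent\textbf{Step 1 (approximate equivariance).} I would first show that for any $H \in C^1(I^{(r)})$ and any $\gamma \in \G^{(n)}$,
\begin{equation*}
[\mathcal L_* (H \circ k)](\gamma) \;=\; [\L_{-\delta\tau} H](\gamma k_0) \;+\; O\bigl(\|H\|_{C^1}\,\kappa^{n}\bigr),
\end{equation*}
with $\kappa < 1$ the contraction constant of Lemma \ref{contracting}. The proof combines two ingredients: the identification of branches of $T^{-1}$ with left multiplication by generators on $\G$ that preserves the sum, and the pointwise approximation $\tau_*(g_i\gamma) = \tau(g_i\gamma k_0) + O(\kappa^n)$ that follows by the same estimates used in the proof of Lemma \ref{bridging}, together with $d_E(\gamma o, \gamma k_0) \ll \kappa^n$ and the $C^1$ regularity of $\tau$ and $H$ near $I$. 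Plugging $H = h_{-\delta\tau}$ and using $\L_{-\delta\tau} h_{-\delta\tau} = h_{-\delta\tau}$ yields that $h_0 := h_{-\delta\tau} \circ k$ is an approximate fixed point:
\begin{equation*}
(\mathcal L_* h_0 - h_0)(\gamma) = E_1(\gamma), \qquad |E_1(\gamma)| \ll \kappa^{n} \quad (\gamma \in \G^{(n)}).
\end{equation*}

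\smallskip

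\noindent\textbf{Step 2 (iterating $\mathcal L_*$ on the error).} I would then define
\begin{equation*}
h_* := h_0 + \sum_{N \ge 0} \mathcal L_*^N E_1
\end{equation*}
and show the series converges absolutely in $\ell^\infty(\G)$. The key estimate is that if $g : \G \to \R$ satisfies $|g(\gamma)| \le B\kappa^m$ for $\gamma \in \G^{(m)}$, then $|[\mathcal L_* g](\gamma)| \ll B\kappa^{m+1}$ for $\gamma \in \G^{(m-1)}$. This follows by bounding $\sum_{\gamma' : \sigma\gamma' = \gamma} e^{-\delta\tau_*(\gamma')}$ uniformly in $\gamma$ via Step 1 applied to $H \equiv 1$, since $\L_{-\delta\tau} 1$ is bounded on $I$. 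Iterating gives $|\mathcal L_*^N E_1(\gamma)| \ll \kappa^{n+N}$ on $\G^{(n)}$, so the series telescopes to $h_*(\gamma) = h_0(\gamma) + O(\kappa^n)$, which gives the stated approximation with any $\theta < \kappa^{-1}$. By construction $\mathcal L_* h_* = h_0 + E_1 + \sum_{N\ge 1}\mathcal L_*^N E_1 = h_*$.

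\smallskip

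\noindent\textbf{Step 3 (positivity and uniqueness).} Positivity for $\gamma$ of sufficiently large depth is automatic since $h_0 \ge c > 0$ and the error is exponentially small; for the finitely many remaining $\gamma$ of small depth, one propagates positivity backwards using the identity $h_* = \mathcal L_*^m h_*$, all of whose summands are nonnegative and at least one of which (for $m$ large) lies in the asymptotic regime where positivity already holds. For uniqueness, the difference $u = h_* - \tilde h_*$ of two solutions is $\mathcal L_*$-invariant and satisfies $u = O(\theta^{-n})$ on $\G^{(n)}$; applying $\mathcal L_*^N$ forces $u = O(\theta^{-n-N})$ for every $N$, so $u \equiv 0$.

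\smallskip

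\noindent\textbf{Main obstacle.} The delicate step is Step 1: producing the approximate intertwining $\mathcal L_* \circ k^* = k^* \circ \L_{-\delta\tau} + O(\kappa^n)$ with explicit dependence on $\|H\|_{C^1}$. This requires carefully matching the combinatorial sum over preimages in $\Sigma_A^+$ (where $T^{-1}$ has branches $g_i$) against the sum $\sigma^{-1}\{\gamma\}$ in $\G$, and controlling both the exponential weight and the argument by the $O(\kappa^n)$ displacement of $\gamma o$ from $\gamma k_0$ — exactly the phenomenon that was quantified in Lemma \ref{bridging}. Once this comparison is in place, the rest of the argument is a routine contraction/telescoping argument familiar from Ruelle--Perron--Frobenius theory.
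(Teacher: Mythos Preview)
The paper does not prove this lemma; it simply cites Lalley \cite[Theorem 4]{LALLEYSYMB}. Your outline is therefore not competing with any argument in the paper, and in fact it reconstructs essentially the route Lalley takes: build an intrinsic transfer operator $\mathcal L_*$ on the semigroup, compare it to $\L_{-\delta\tau}$ via the embedding $\gamma\mapsto\gamma k_0$, and produce the fixed point as a limit of iterates applied to $h_0=h_{-\delta\tau}\circ k$. Steps 1 and 3 are fine as written.

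There is, however, a real gap in Step 2. Your ``key estimate'' claims that a bound $|g(\gamma)|\le B\kappa^m$ on $\G^{(m)}$ propagates under $\mathcal L_*$ to $|[\mathcal L_* g](\gamma)|\ll B\kappa^{m+1}$ on $\G^{(m-1)}$, justified by bounding the one-step weight sum $C_0:=\sup_\gamma\sum_i e^{-\delta\tau_*(g_i\gamma)}$. But that argument only yields $|[\mathcal L_* g](\gamma)|\le C_0 B\kappa^{m}$, and iterating it $N$ times produces a factor $C_0^N$; since $C_0\approx\sup_I \L_{-\delta\tau}1$ need not be $\le 1$ (only the \emph{spectral radius} of $\L_{-\delta\tau}$ is $1$), the series $\sum_N \mathcal L_*^N E_1$ is not controlled this way. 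The fix is to avoid the single-step iteration and bound $\mathcal L_*^N E_1$ directly: for $\gamma$ of length $n$,
\[
|[\mathcal L_*^N E_1](\gamma)|\le D\kappa^{n+N}\sum_{\sigma^N\gamma'=\gamma}e^{-\delta\tau_*^N(\gamma')},
\]
and now Lemma \ref{bridging} (the $N$-fold version of your Step 1, with error depending only on the depth $n$ of $\gamma$) gives $\sum_{\sigma^N\gamma'=\gamma}e^{-\delta\tau_*^N(\gamma')}\ll[\L_{-\delta\tau}^N 1](\gamma k_0)$, which is bounded uniformly in $N$ by Ruelle--Perron--Frobenius. With this correction your Step 2 goes through and the series converges to $h_*=h_0+O(\kappa^n)$ as claimed.
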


Now recall the definition of $C( \cdot, g)$  from Lemma \ref{maintermrenewal}. If we define the corresponding function on $\G$ according to the pairing of $h_*$ with $h_{-\delta \tau}$,
\begin{equation}
C_*( \g , g ) = \left(\frac{ \int g d\nu_{-\delta \tau} }{ \delta \int \tau d \nu_0 } \right) h_{*}(\g) ,
\end{equation}
we get from Lemma \ref{hinside} that
\begin{equation}
C_*( \g , g) = C( \g k_0 , g) + O( \| g\|_{C^1 }\theta^{-n} )
\end{equation}
when $\g \in \G^{(n) }$.
This means that the main term contribution \eqref{eq:mainterm} to $N_q^*(a , \g_0 , \vp)$ is
\begin{align}
&\frac{e^{\delta a } }{ | \G_q | }   \langle \vp , \underline{ 1 }  \rangle  \underline {1} \left( \sum_{\g : \sigma^{n} \g = \g_0 }    C_*(  \g, g) e^{-\delta \tau_*^n(\g) }  + O(\|g \|_{C^1 } \theta^{-n}\sum_{\g : \sigma^{n} \g = \g_0 }   e^{-\delta \tau_*^n(\g) } ) \right) \\
& =  \frac{e^{\delta a } }{ | \G_q | }   C_*( \g_0, g)  \langle \vp , \underline{ 1 }  \rangle  \underline {1}  + e^{\delta a } O( \theta^{-n} \| \vp \| \|g \|_{C^1})
\end{align}
by using \eqref{eq:stationaryhstar}  and a calculation similar to that in \eqref{eq:taubridged} to give
\begin{equation}
\sum_{\g : \sigma^{n} \g = \g_0 }   e^{-\delta \tau_*^n(\g) } \ll [\L^n_{-\delta} 1] (\g_0 k_0 ) \leq 1 .
\end{equation}
We now let  $n  = \lfloor c a \rfloor $ with
\begin{equation}
c = \min\left( \frac{ \delta - \e }{4 \log | \A | } ,  \frac{\e }{ 2 P( -(\delta -\e) \tau ) } \right) .
\end{equation}
 Then the result of the preceding discussion is that
\begin{equation}
N_q^* ( a , \g_0 , \vp ) = \frac{e^{\delta a } }{ | \G_q | } C_*(\gamma_0, g)  \langle \vp , \underline{ 1 }  \rangle  \underline {1} + O\left( (\|\vp\|  (\|g \|_{C^1} + \| G \|_\infty ) q^C e^{(\delta - \e' ) a }\right) 
\end{equation}
for some $\e' = \e'( \k , \theta , \e , \A    )$ . 
This proves our main Corollary \ref{maincoro} given the following observations.
When $\vp( \g ) = \mathbf{1}\{ \g = \xi \}$ we have that
\begin{equation}
\langle \vp , \underline 1 \rangle  = 1
\end{equation}
and hence, under the mixing hypothesis on $\G$, evaluating $N_q^*(a, \g_0 ,  \mathbf{1}\{ \g = \xi \})$ gives 
\begin{equation}
\sum_{  \substack {\g \in \G_\A  \: : d( o, \g \g_0 o) - d( o , \g_0 o) \leq a \\ \pi_q(\g) = \xi }} G(\g \g_0 o)    = \frac{e^{\delta a } }{ | \G_q | }  C_*(\gamma_0, g) + O\left( ( \|g \|_{C^1} + \| G \|_\infty ) q^C e^{(\delta - \e' ) a }\right) .
\end{equation}
 In addition, we have the identity 
\begin{equation}
\| \g \|^2 =  2\cosh( d ( o , \g o) )
\end{equation}
when $o$ is chosen to be $i \in \H$. Then the condition $ d( o, \g \g_0 o) - d( o , \g_0 o) \leq a$ becomes
\begin{equation}
\frac{ \| \g \g_0 \|}{ \| \g_0 \| } \leq R ,
\end{equation}
where $R  = \sqrt{2\cosh( a )} = e^{a/2} + O(e^{-3a/2 } )$. With these remarks one obtains our main Corollary \ref{maincoro}.


\end{document}